\numberwithin{equation}{section}
\newtheorem{theorem}{Theorem}[section]
\newtheorem{lemma}[theorem]{Lemma}
\newtheorem{proposition}[theorem]{Proposition}
\newtheorem{corollary}[theorem]{Corollary}
\newtheorem{theoremx}{Theorem}
\theoremstyle{definition}
\theoremstyle{definition}
\newtheorem{definition}[theorem]{Definition} 
\newtheorem{remark}[theorem]{Remark}
\newtheorem{example}[theorem]{Example}
\newcommand{\ZZ}{\mathbb{Z}}
\newcommand{\FF}{\mathbb{F}}
\DeclareMathOperator{\fr}{{fr}}
\DeclareMathOperator{\Hom}{{Hom}}
\DeclareMathOperator{\Ker}{{{Ker}}}
\DeclareMathOperator{\soc}{{{soc}}}
\DeclareMathOperator{\w}{{w}}
\DeclareMathOperator{\supp}{{supp}}
\DeclareMathOperator{\type}{{type}}
\DeclareMathOperator{\Ann}{{Ann}}
\DeclareMathOperator{\dd}{{d}}
\DeclareMathOperator{\hh}{{H}}
\DeclareMathOperator{\hull}{{hull}}
\DeclareMathOperator{\ck}{{Coker}}
\DeclareMathOperator{\rank}{{rank}}
\newcommand{\hd}{\dd_{\hh}}
\newcommand{\n}{\mathfrak{n}}
\newcommand{\m}{\mathfrak{m}}
\newcommand{\cA}{\mathcal{A}}
\newcommand{\cB}{\mathcal{B}}
\begin{document}

\title{Bounds and MacWilliams Identities for codes over Artinian rings}  

\author[Camps-Moreno]{Eduardo Camps-Moreno}
\address{Eduardo Camps-Moreno\\
Virginia Tech,
Blacksburg, VA, USA}
\email{e.camps@vt.edu}

\author[Espinosa-Vald\'ez]{Carlos Espinosa-Vald\'ez}
\address{Carlos Espinosa-Vald\'ez\\
Centro de Investigaci\'on en Matem\'aticas\\ Guanajuato, Gto., M\'exico.
}
\thanks{C. Espinosa-Vald\'ez was supported by CONAHCyT-SECIHTI Grant 857813 and the SNII Research Assistant Fellowship Exp 61536. H.H. L\'opez was partially supported by the NSF Grant DMS-2401558. L. N\'u\~nez-Betancourt and Y. Pitones were partially supported by SECIHTI Grants CBF 2023-2024-224 and CF-2023-G-33.}
\email{carlos.espinosa@cimat.mx }

\author[L\'opez]{Hiram H. L\'opez}
\address{Hiram H. L\'opez\\
Virginia Tech,
Blacksburg, VA, USA}
\email{hhlopez@vt.edu}

\author[N\'u\~nez-Betancourt]{Luis N\'u\~nez-Betancourt}
\address{Luis N\'u\~nez-Betancourt \\ Centro de Investigaci\'on en Matem\'aticas\\ Guanajuato, Gto., M\'exico.}
\email{luisnub@cimat.mx}

\author[Pitones]{Yuriko Pitones}
\address{Yuriko Pitones\\ 
Universidad Aut\'onoma Metropolitana, Unidad Iztapalapa.}
\email{ypitones@xanum.uam.mx }

\keywords{Frobenius, codes over rings,  MacWilliams}
\subjclass[2020]{Primary 11T71, 13D40; Secondary 13H10, 13P25, 14G50.}  
\maketitle 

\parindent=8mm

\begin{abstract}

This work develops new foundations for the theory of linear codes over local Artinian commutative rings. We use algebraic invariants such as the socle, type, length, and minimal number of generators to measure the size of codes. We prove a relation between the type of a code and the free rank of its dual over Frobenius rings, extending previous results for chain rings.
We also provide new upper bounds for the Hamming distance in terms of the length and type of a code and and conditions under which the dual of an MDS code remains MDS for general Artinian rings. The latter result is obtained by reducing to Frobenius rings via Nagata idealizations, which, to the best of our knowledge, had not been used in coding theory before.
We introduce a conceptually different version of the weight enumerator polynomial. This enumerator is meaningful even in the case of infinite rings and yields new applications in the finite setting. Using this polynomial, we prove a MacWilliams identity that holds over Frobenius rings.

\end{abstract}

\setcounter{tocdepth}{1}
\tableofcontents
\section{Introduction}\label{SecIntro}

A linear code, the main object in coding theory, is typically defined over a finite field. Even though the interest in codes over rings dates back to the early 1970s~\cite{blake72}, their attraction exploded in 1994 when Hammons et al.~\cite{hammons94} proved that the well-known non-linear Kerdock and Preparata codes are projections of linear codes over $\mathbb{Z}_4$. They also proved that these two families of codes are dual to each other, answering the 20-year-old riddle of why they have weight enumerators that satisfy the MacWilliams identities. Other rings that have been considered to define codes are chain rings~\cite{RootCodes, COAAFCCR, LCDCodes} and Galois rings~\cite{bossaller2025, GaloisRings}, which are Artinian rings. However, finite Frobenius rings are undoubtedly the most used rings for defining codes. Two of the main theorems in coding theory are the MacWilliams extension and the MacWilliams identities theorems, which were proven for codes over finite fields by MacWilliams~\cite{FJMacWilliamsThesis}.  Wood~\cite{wood99, wood08} proved that these two theorems hold for codes over finite Frobenius rings. Even more, if any code over a finite ring satisfies any of these theorems, the ring must be Frobenius~\cite{wood08}. Wood's results sparked interest among ring theorists in characterizing other families of rings in terms of the MacWilliams extension theorem~\cite{Iovanov2016,martin19,iovanov22}, showing that coding theory tools inspire ring-theoretic problems. Some families of codes defined over Artinian  rings that have been extensively studied are cyclic codes~\cite{CyclicCodes} and quasi-cyclic codes~\cite{QuasiCyclic1, QuasiCyclic2}. We refer the interested reader to a recent book on codes over finite rings \cite{BookCodesRings}. 

In this manuscript, we aim to further develop the foundations of coding theory focusing on different algebraic invariants that measure size. 
We study linear codes over a local Artinian commutative ring, with unity $(R,\m,K)$, that does not necessarily contain a field. We recall that a commutative ring is Artinian if and only if it is Noetherian and zero-dimensional. 
By an $R$-linear code $C$, we mean an $R$-submodule of $R^n$. The Hamming weight of an element in $C$ is defined as in the classical setting, as the number of its nonzero entries. Since $R$ is an Artinian local ring, there exists a natural bilinear form in $R^n$ that acts as a dot product. Thus, there exists a natural notion of dual code $C^\perp$. In our study, we rely on the following classical concepts of commutative algebra: the socle of $C$, denoted by $\soc(C)$, the type of $C$, defined by $\type(C)=\dim_K \soc(C)$, the minimal number of generators $\mu(C)$, and $\lambda(C)$, the length of $C$.  We have that any of these invariants is zero if and only if $C=0$. Furthermore, the type can only decrease for submodules, the number of generators can only decrease with quotient modules, and the length can only decrease in either case. These features indicate that these invariants give a notion of size, that are meaningful even if the ring is not finite. This conceptual way to measure size allow to recover and extend results previously know only for finite rings. Furthermore, using this idea, we are able to obtain results that are new even for finite rings.

We now focus on the setting of Frobenius rings. We point out that commutative Frobenius  rings are Gorenstein Artinian rings, which are ubiquitous in commutative algebra~\cite{Bass}. We note that $R$ is a Frobenius ring if and only if
$$\lambda(C)+\lambda(C^{\perp})=n \lambda(R)$$
for every $C\subseteq R^{n}$. Samei and Mahmoudi~\cite{Samei_Mahmoud} showed that if $R$ is a chain ring, then $\mu(C)+\fr(C^\perp)=n$, where $\fr(C^\perp)$ is the free rank of $C^\perp$. Since in chain rings the minimum number of generators and the type of a code coincide, the following theorem extends Samei and Mahmoudi's result \cite{Samei_Mahmoud} to codes over Artinian local rings. Furthermore, we prove that the equation also characterizes Frobenius rings. 
\begin{theoremx}[{Theorem~\ref{ThmTypeFreeRank}}] \label{ThmB}
A ring $R$ is Frobenius if and only if 
$$
\type(C)+\fr(C^\perp)=n
$$
for every code $R$-code $C\subseteq R^{n}$.
\end{theoremx}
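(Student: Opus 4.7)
The plan is to combine Matlis duality for Frobenius local Artinian rings with a general structural lemma, valid over any local Artinian ring $(R,\mathfrak{m},K)$: for any submodule $A \subseteq R^n$, $\fr(A) = \dim_K \bar A$, where $\bar A = (A + \mathfrak{m}R^n)/\mathfrak{m}R^n \subseteq K^n$. Equivalently, every short exact sequence $0 \to A \to R^n \to B \to 0$ satisfies $\fr(A) + \mu(B) = n$. To prove $\fr(A) \geq \dim_K \bar A$, I would pick $a_1,\dots,a_r \in A$ whose residues form a basis of $\bar A$; by Nakayama's lemma they extend to a basis of $R^n$, so they generate a free rank-$r$ direct summand of $R^n$ that is also a summand of $A$. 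For the reverse inequality, I would take a free summand $F \cong R^r$ of $A$ with basis $e_1,\dots,e_r$ and argue by contradiction that $\bar e_1,\dots,\bar e_r$ are $K$-linearly independent in $K^n$: otherwise some $e = \sum r_i e_i \in \mathfrak{m}R^n$ would have at least one $r_i$ a unit, yielding $\ann_R(e) = 0$ from freeness of $F$, but simultaneously $\mathfrak{m}^{N-1} \subseteq \ann_R(e)$ from $e \in \mathfrak{m}R^n$ (where $\mathfrak{m}^N = 0$), a contradiction unless $R$ is a field, in which case the statement is immediate.

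\textbf{Forward direction.} If $R$ is Frobenius then $R \cong E_R(K)$, so $(-)^\vee := \Hom_R(-,R)$ is an exact contravariant self-duality on finitely generated $R$-modules (Matlis duality) with $K^\vee \cong K$. This yields $\mu(M^\vee) = \dim_K \Hom_R(M^\vee, K) = \dim_K \Hom_R(K,M) = \type(M)$ for every finitely generated $M$, the middle equality being $\Hom_R(DM, DK) \cong \Hom_R(K, M)$ under Matlis duality $D$. I would then dualize $0 \to C \to R^n \to R^n/C \to 0$ and use the identification $\Hom_R(R^n, R) \cong R^n$ via the dot product to identify $(R^n/C)^\vee$ with $C^\perp$, obtaining $0 \to C^\perp \to R^n \to C^\vee \to 0$. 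The structural lemma applied to this sequence gives $\fr(C^\perp) + \mu(C^\vee) = n$, and $\mu(C^\vee) = \type(C)$ by the preceding identity.

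\textbf{Converse.} Assuming $\type(C) + \fr(C^\perp) = n$ for every code $C$, I would apply the identity with $n = 1$ and $C = \mathfrak{m}$ (the case when $R$ is a field being trivial): then $C^\perp = \ann_R(\mathfrak{m}) = \soc(R)$, which is a $K$-vector space and hence has free rank $0$ (as $K$ is not a free $R$-module when $\mathfrak{m} \neq 0$), while $\soc(R) \subseteq \mathfrak{m}$ gives $\soc(\mathfrak{m}) = \soc(R)$ and $\type(C) = \type(R)$. The identity then reads $\type(R) = 1$, so $R$ is Frobenius.

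The hardest part will be the reverse inequality in the structural lemma, where one must show that a free submodule of $R^n$ is pure with respect to $\mathfrak{m}R^n$; this hinges on the nilpotency of $\mathfrak{m}$ together with the fact that a non-trivial $R$-combination in a free module with at least one unit coefficient has trivial annihilator. Everything else follows formally from Matlis duality and a single well-chosen test case for the converse.
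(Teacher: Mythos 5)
Your proposal is correct, and its forward direction takes a genuinely different route from the paper's. The paper identifies $\soc(C)$ with a $K$-subspace of $K^n$ via a fixed generator of $\soc(R)$, proves that its $K$-orthogonal equals $\rho(C^{\perp})$ (Lemma~\ref{LemmaTypeFreeRank}), and then counts dimensions using $\fr(C^{\perp})=\dim_K \rho(C^{\perp})$ (Lemma~\ref{freerank}). You instead apply $\Hom_R(-,R)$, exact because $R\cong E_R(K)$, to $0\to C\to R^n\to R^n/C\to 0$, identify the dualized sequence with $0\to C^{\perp}\to R^n\to C^{\vee}\to 0$ via the dot-product identification $\Hom_R(R^n,R)\cong R^n$, and finish with $\mu(C^{\vee})=\type(C)$ together with your structural lemma $\fr(A)+\mu(R^n/A)=n$ for $A\subseteq R^n$. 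In effect you recombine ingredients the paper establishes elsewhere (the isomorphism $R^n/C^{\perp}\cong C^{\vee}$ inside the proof of Lemma~\ref{Lemma_Dimension_analogous}, and $\type(C)=\mu(C^{\vee})$ from Lemma~\ref{lemmaDualitySoc}) and bypass Lemma~\ref{LemmaTypeFreeRank} entirely. What your route buys: your lemma $\fr(A)=\dim_K\bar{A}$, proved by Nakayama plus the modular law for one inequality and by nilpotency of $\m$ and the annihilator argument for the other, is valid over every Artinian local ring, whereas the paper proves the corresponding statement in Lemma~\ref{freerank} using injectivity of $R$ (only its characterization via injections $R^{\theta}\hookrightarrow C$ truly needs Frobenius, and you never use that part). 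What the paper's route buys is the finer geometric fact that $\rho(C^{\perp})$ is exactly the $K$-orthogonal of $\soc(C)$ inside $K^n$, which is of independent use. Your converse is the same single-test-code argument as the paper's, with the dual choice $C=\m\subseteq R^1$ instead of $C=\soc(\m)$, and your explicit handling of the field case and of $\fr(\soc(R))=0$ is, if anything, slightly more careful.
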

It is worth mentioning that our proofs are conceptually different from those provided for chain rings~\cite{Samei_Mahmoud}, as we heavily rely on the fact that $R^n$ has a nice bilinear product and enjoys duality and their proof uses the structure of finitely generated modules over chain rings.

 Unlike the classical inner product over the real numbers, $C \cap C^\perp$ could be nonzero. This gives rise to the hull of a code: ${\hull}(C)=C \cap C^\perp$. The hull helps to evaluate the complexity of certain coding theory algorithms, such as verifying the equivalence between two linear codes over finite fields~\cite{Leon1982, Sendrier2000}. If ${\hull}(C)=0$, the code $C$ is called LCD. We prove the following result, which extends previous work by  Bhowmick, Fotue-Tabue, Mart\'inez-Moro, Bandi,
and Bagchi \cite{LCD}. Specifically, in Theorem~\ref{ThmLCDfree}, we show that 
if $R$ is Frobenius and $C$ is LCD, then $C$ is free.

In Section~\ref{SecCodes}, we study the general case when $(R,\m, K)$ is a local Artinian ring. Samei and Mahmoudi~\cite{Samei_Mahmoud} showed that if $R$ is a chain ring, then $\hd(C)\leq n-\mu(C)+1$. 
We note that in a chain ring $\type(C)=\mu(C)$.
We extend this result to all Artinian local rings.
\begin{theoremx}[{Theorem~\ref{ThmSing}}] \label{ThmA}
If $R$ is a local Artinian ring, we have that
\[
\hd(C)\leq n-\frac{\lambda(C)}{\lambda(R)}+1
\qquad
\text{ and }
\qquad
\hd(C)\leq n-\frac{\type(C)}{\type(R)}+1.
\]
\end{theoremx}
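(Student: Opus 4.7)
The plan is to adapt the classical puncturing proof of the Singleton bound, replacing the dimension by the two invariants $\lambda$ and $\type$ and exploiting their good behavior under submodules. Set $d=\hd(C)$ and choose any subset $I\subseteq\{1,\dots,n\}$ of cardinality $d-1$. Consider the coordinate projection
\[
\pi\colon R^n\longrightarrow R^{n-d+1}
\]
that deletes the coordinates in $I$. If $c\in C$ satisfies $\pi(c)=0$, then $c$ is supported on $I$, so $\wt(c)\leq d-1$; by the definition of $\hd(C)$ this forces $c=0$. Hence $\pi$ restricts to an $R$-linear injection $C\hookrightarrow R^{n-d+1}$.

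For the first bound, length is additive on short exact sequences, so the injection gives
\[
\lambda(C)\;\leq\;\lambda\bigl(R^{n-d+1}\bigr)\;=\;(n-d+1)\,\lambda(R),
\]
and dividing by $\lambda(R)$ and rearranging yields $\hd(C)\leq n-\lambda(C)/\lambda(R)+1$.

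For the second bound, I would use that the socle is monotone under inclusions: for any $R$-submodule $N\subseteq M$ one has $\soc(N)=\{x\in N:\m x=0\}=N\cap\soc(M)\subseteq\soc(M)$. Applying this to the inclusion $C\hookrightarrow R^{n-d+1}$, together with the fact that $\soc$ commutes with finite direct sums, gives
\[
\soc(C)\;\subseteq\;\soc\bigl(R^{n-d+1}\bigr)\;=\;\soc(R)^{n-d+1}.
\]
Taking $K$-dimensions produces $\type(C)\leq(n-d+1)\,\type(R)$, and rearranging yields the claimed inequality.

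There is no serious obstacle here: the only conceptual point is to identify the right module-theoretic replacements for the vector-space dimension that simultaneously (a) are monotone under $R$-submodule inclusions and (b) evaluate to $(n-d+1)$ times their value on $R$ when applied to $R^{n-d+1}$. Both $\lambda$ and $\type$ satisfy (a) and (b), so in each case the Singleton-type bound comes out of the single injection furnished by puncturing $d-1$ coordinates.
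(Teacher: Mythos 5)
Your proposal is correct and follows essentially the same route as the paper: puncturing on a set of $d-1$ coordinates (equivalently, passing to $R^n/H_\cA\cong R^{n+1-d}$) gives an injection $C\hookrightarrow R^{n-d+1}$, after which the bounds follow from the monotonicity of $\lambda$ and $\type$ under submodule inclusions, which the paper cites as Lemma~\ref{aditividad} and you verify directly via additivity of length and $\soc(N)=N\cap\soc(M)$.
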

The inequalities presented in the previous Theorem are called MDS, for Maximum Distance Separable, and MDT bounds, for Maximum Distance Type, respectively. A code achieving the MDS (MDT) bound is called an MDS (MDT) code. Given that the free rank and the type of a code coincide if and only if the code is free. It turns out that MDS codes are free MDT codes (Theorem~\ref{ThmEquivMDS}). Example~\ref{ex.mdtnotmdr} shows that non-free MDT codes exist.

It is natural to ask whether the dual of an MDS code is also MDS. This is a well know fact for linear codes over a field. 
We show that this property holds for codes over a local Artinian ring in general, which was unexpected for the authors.

\begin{theoremx}[{Theorem~\ref{Theo.MDSduals}}]
If $R$ is a local Artinian ring, then $C$ is MDS if and only if $C^\perp$ is MDS.
\end{theoremx}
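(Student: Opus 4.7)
The plan is to split the argument into a Frobenius case and a reduction to it via the Nagata idealization foreshadowed in the abstract. Assume first that $R$ is Frobenius. Then the length identity $\lambda(C)+\lambda(C^\perp)=n\lambda(R)$ and the biduality $(C^\perp)^\perp=C$ are both available. Applying Theorem~A to $C^\perp$ gives $\hd(C^\perp)\leq n-\lambda(C^\perp)/\lambda(R)+1=\lambda(C)/\lambda(R)+1$. If $C$ is MDS, I would prove the matching lower bound by contradiction: a nonzero $v\in C^\perp$ of weight at most $\lambda(C)/\lambda(R)$ imposes, via the bilinear pairing, a linear relation supported on few coordinates of $C$; unpacking this with biduality and the socle structure of $C$ should produce a nonzero codeword of $C$ with support strictly smaller than $\hd(C)=n-\lambda(C)/\lambda(R)+1$, a contradiction. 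The classical-field proof, in which MDS is equivalent to every $k$-subset being an information set, should adapt once ``dimension'' is replaced by ``$\lambda/\lambda(R)$''.

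For a general local Artinian $R$, let $\omega_R$ be its canonical module and set $S=R\ltimes\omega_R$. Then $S$ is local Artinian Gorenstein, hence Frobenius, with $\lambda_S(S)=\lambda_R(R)+\lambda_R(\omega_R)=2\lambda_R(R)$ and the same residue field $K$ as $R$. I would associate to each $R$-code $C\subseteq R^n$ a companion $S$-code $\widetilde{C}\subseteq S^n$, designed so that Hamming distance, the normalized length $\lambda/\lambda(R)$ that appears in the MDS bound, and dualization are all compatible, and in particular $(\widetilde{C})^\perp=\widetilde{C^\perp}$. The Frobenius case applied to $\widetilde{C}$ then transfers MDSness to $\widetilde{C^\perp}$, which descends to MDSness of $C^\perp$ over $R$.

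The delicate point is constructing $\widetilde{C}$ so that all three compatibilities hold simultaneously. The obvious candidate $\widetilde{C}=SC\subseteq S^n\cong R^n\oplus\omega_R^n$ is $S$-stable, but it can contain elements of the form $(0,c\omega)$ whose support is strictly smaller than $\hd(C)$, because nontrivial annihilators in $R$ can kill coordinates of $c$. Either ruling out that such elements minimize the weight of $\widetilde{C}$, or replacing $SC$ by a more refined construction that exploits the socle-top duality between $R$ and $\omega_R$, is where I expect the bulk of the technical work to sit, and where the choice of $\omega_R$ specifically --- rather than an arbitrary module over which to idealize --- becomes essential.
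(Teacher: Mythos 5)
Your high-level plan (a Frobenius case plus a reduction via the Nagata idealization) is the strategy the paper advertises, and your Frobenius-case sketch is completable: the upper bound follows from Theorem~\ref{ThmSing} together with the length identity of Lemma~\ref{Lemma_Dimension_analogous}, and the lower bound needs no socle argument --- if $C$ is MDS and $|\cA|=d-1$, then $C+H_{\cA}=R^n$, so taking orthogonals gives $C^{\perp}\cap H_{\cA^c}=0$ and hence $\hd(C^\perp)\ge n-d+2$. The genuine gap is exactly the step you flag as ``where the bulk of the technical work sits.'' Three things are missing from your reduction. (i) You do not prove that $\widetilde{C}=SC$ preserves the minimum distance; this is in fact salvageable, because MDS implies $C$ is free and a direct summand of $R^n$ (Theorem~\ref{ThmEquivMDS} and Lemma~\ref{LemmmaDirectSummandMDS}), so tensoring the isomorphism $C\cong R^n/H_{\cA}$ ($|\cA|=d-1$) with $W$ shows no nonzero element of $C\otimes_R W\subseteq W^n$, i.e.\ no element of the form $(0,c\omega)$ you worried about, can have weight below $d$. (ii) The claimed compatibility $(\widetilde{C})^{\perp}=\widetilde{C^{\perp}}$ is asserted, not proved; a priori one only has $SC^{\perp}\subseteq (SC)^{\perp}$. (iii) Most seriously, even granting that $(SC)^{\perp}$ is MDS over $S$, nothing in your outline converts this into MDSness of $C^{\perp}$ over $R$: the inclusion $C^{\perp}\subseteq (SC)^{\perp}$ descends the distance lower bound, but to close the argument against Theorem~\ref{ThmSing} over $R$ you need $\lambda_R(C^{\perp})\ge (d-1)\lambda_R(R)$, and the Frobenius length identity that would give it is precisely what is unavailable over a general Artinian local ring.

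The paper fills this hole by using the idealization surgically rather than transferring MDSness wholesale. Since $C$ is a direct summand of $R^n$, the extension $C_S=C\otimes_R S$ is a free direct summand of $S^n$; over the Frobenius ring $S$, Lemma~\ref{Lemma_Dimension_analogous} and Theorem~\ref{Lemma_Free_iff_Free} give that $C_S^{\perp}$ is free of rank $n-\rank(C)$; then the projection $\psi\colon S^n\to R^n$ satisfies $\psi(C_S^{\perp})=C^{\perp}$, and Nakayama's lemma produces a basis of $C_S^{\perp}$ lying inside $C^{\perp}$, so that $C^{\perp}$ itself is free over $R$ of rank $d-1$ and $(C^{\perp})^{\perp}=C$ (Lemma~\ref{LemmaRankOrthogonal}). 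With that structural lemma, the distance bounds are run entirely over $R$, and the biduality $(C^{\perp})^{\perp}=C$ --- which you invoke only in the Frobenius case, but need over general $R$ for the ``only if'' direction --- comes for free. In short, your outline points in the right direction, but the descent of duality, freeness, and length information from $S$ back to $R$ is missing, and without it the proof does not close.
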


The proof of this theorem relies on lifting the code to $S^n$, where $S$ is the Nagata idealization of the canonical module of $R$. By this construction, $S$ is a Frobenius ring that contains $R$. This way we reduce the problem to Frobenius rings where we have a well behaved duality. We expect that more results for general Artinian rings can be obtained following this strategy.

Finally, in Section~\ref{SecMacWilliams}, we explore the notion of a weight enumerator of a code. Since we cannot use cardinality because the ring may be infinite, we  use the length  of the shortened codes of $C$ to address the size of the code. 
In fact, we introduce  $i$-th weight polynomial $g^C_{i}(z)$, with an auxiliary variable $z$. If the ring is finite with a residue field of size $q$, then  $g^C_{i}(q)$ gives the number of codewords of weight $i$.   
The weight enumerator of $C$ is defined by
$$W_C(x,y,z)=\sum^n_{i=1} g^C_{i}(z)x^{n-i}y^{i}.$$
This extends the classical weight enumerator in the finite case. Furthermore, even over finite fields, this polynomial provides new information. For instance, $g^C_{i}(z)$ captures the change in the number of codewords of weight $i$ under field extensions (see Corollary \ref{CorExtension} and Example \ref{ExFieldExt}). With these polynomials on hand, we are able to prove a version of the
MacWilliams Identity that works even for infinite rings.

\begin{theoremx}[{Theorem~\ref{Theo.MWid}}] {\rm (MacWilliams Identity)} If $R$ is Frobenius, then $$W_{C^\perp}(x,y,z)=\frac{1}{z^{\lambda(C)}}W_C(x+zy-y,x-y,z).$$
\end{theoremx}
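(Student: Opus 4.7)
The plan is to reduce the MacWilliams identity to a duality statement about the shortened codes of $C$ and $C^\perp$, and then use the Frobenius length relation $\lambda(M)+\lambda(M^\perp)=m\,\lambda(R)$ for submodules $M\subseteq R^m$ to close the argument.

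First, starting from the definition of $g_i^C(z)$ developed earlier in this section, I would express the weight enumerator in a form indexed by subsets $T\subseteq [n]$ involving the lengths of the shortened subcodes $C(T):=\{c\in C:\supp(c)\subseteq T\}$. A M\"obius inversion over the Boolean lattice of $[n]$ converts the $i$-indexed sum defining $W_C(x,y,z)$ into a $T$-indexed sum whose terms have the shape (polynomial in $z$ depending on $\lambda(C(T))$) times $y^{|T|}(x-y)^{n-|T|}$; the algebraic identity behind this conversion is the binomial expansion
\[
y^{|S|}(x-y)^{n-|S|}=\sum_{T\supseteq S}(-1)^{|T|-|S|}x^{n-|T|}y^{|T|},
\]
which trades the support sums appearing in the definition of the $g_i^C(z)$ for sums over supersets.

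Next, I would perform the substitution $(x,y)\mapsto(x+(z-1)y,\,x-y)$ and use the key algebraic simplification
\[
(x+(z-1)y)-(x-y)=zy,
\]
so that each factor $(x-y)^{n-|T|}$ is sent to $(zy)^{n-|T|}$. Re-indexing the sum via $T\leftrightarrow T^c$ and comparing termwise with the analogous expression for $W_{C^\perp}(x,y,z)$ reduces the whole identity to verifying, for every $T\subseteq[n]$, a single duality formula relating $\lambda(C^\perp(T))$ with $\lambda(C(T^c))$, $\lambda(C)$, and $|T|$.

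The main obstacle --- and the only place where the Frobenius hypothesis enters in an essential way --- is establishing this duality between the shortened codes of $C$ and $C^\perp$. The argument proceeds in two steps: (i) identifying $C^\perp(T)=(\pi_T(C))^\perp$ inside $R^T$, where $\pi_T:R^n\to R^T$ is the projection onto the coordinates in $T$, which relies on the nondegeneracy of the standard bilinear pairing on $R^n$; and (ii) combining the Frobenius length relation applied to $\pi_T(C)\subseteq R^T$ with the short exact sequence $0\to C(T^c)\to C\to \pi_T(C)\to 0$, which yields $\lambda(\pi_T(C))=\lambda(C)-\lambda(C(T^c))$. Once this shortened-code duality is in hand, the MacWilliams identity follows by straightforward bookkeeping of the $z$-exponents, with the global factor $z^{-\lambda(C)}$ absorbing the contribution of $\lambda(C)$ that appears when rewriting $\lambda(C(T^c))$ in terms of $\lambda(C^\perp(T))$.
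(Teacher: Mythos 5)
Your overall route is the same as the paper's: reduce the identity to a duality between the shortened codes of $C$ and $C^\perp$, then manipulate the subset-indexed generating function. The duality you establish in steps (i)--(ii), namely $\lambda(C^\perp(T))=\lambda(C(T^c))+\lambda(R)\,|T|-\lambda(C)$, is exactly Lemma~\ref{LemmaLenA} rewritten via $\lambda(C^\perp)=n\lambda(R)-\lambda(C)$; your derivation through $C^\perp(T)\cong(\pi_T(C))^{\perp}$ inside $R^{|T|}$ together with the sequence $0\to C(T^c)\to C\to\pi_T(C)\to 0$ is a perfectly good substitute for the paper's computation of $(C^\perp\cap H_{T})^{\perp}=C+H_{T^c}$, and your Möbius-inversion step is the paper's identity $W_C(x,y,z)=L_C(x-y,y,z)$. (Minor remark: the identification $C^\perp(T)=(\pi_T(C))^{\perp}$ is a direct computation needing no nondegeneracy; the Frobenius hypothesis enters only through Lemma~\ref{Lemma_Dimension_analogous}.)

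The gap is in the final bookkeeping, which does not close as you claim. After your substitution and the reindexing $T\leftrightarrow T^c$, matching the coefficient of $(x-y)^{n-|T|}y^{|T|}$ on both sides requires $\lambda(C^\perp(T))=\lambda(C(T^c))+|T|-\lambda(C)$, whereas your (correct) duality formula gives $\lambda(C^\perp(T))=\lambda(C(T^c))+\lambda(R)\,|T|-\lambda(C)$. The discrepancy $z^{(\lambda(R)-1)|T|}$ varies with $T$ and cannot be absorbed into the global factor $z^{-\lambda(C)}$; the terms agree only when $\lambda(R)=1$, i.e., when $R$ is a field. What your duality actually proves is
\[
W_{C^\perp}(x,y,z)=\frac{1}{z^{\lambda(C)}}\,W_C\!\left(x+z^{\lambda(R)}y-y,\;x-y,\;z\right),
\]
which is precisely what the paper's own argument produces (Proposition~\ref{prop.mwi} has $L_C(z^{\lambda(R)}y,x,z)$, and the last line of the proof of Theorem~\ref{Theo.MWid} reads $W_C(x-y+z^{\lambda(R)}y,x-y,z)$); the printed statement with $zy$ appears to be a misprint rather than something your substitution can rescue. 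Concretely, for $R=\ZZ/(4)$, $n=1$, $C=R$ one has $W_{C^\perp}(x,y,z)=x$, while $z^{-\lambda(C)}W_C(x+zy-y,x-y,z)=x-y+y/z$. To make your argument correct, perform the substitution with $x+(z^{\lambda(R)}-1)y$ in place of $x+(z-1)y$, so that $X-Y=z^{\lambda(R)}y$ matches the exponent supplied by the duality formula.
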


Furthermore, in Corollary~\ref{25.06.13}, we prove that if the previous identities hold for any code over a fixed ring, the ring must be Frobenius.

In this paper, we consider only commutative rings with unity. $(R,\m,K)$ denotes a local Artinian ring with maximal ideal $\m$ and residue field $K=R/\m$. We also use the terms `codes' and `linear codes' interchangeably because we deal only with codes that are $R$-modules.

\section{Preliminaries on commutative rings}\label{SecPre}

In this section, we introduce basic and well-known concepts and results in commutative algebra, for proofs and further details, see for example \cite{BrunsHerzog,Eisenbud,Matsumura}.

For the rest of this section, let $C$ be a finitely generated (f.g.) $R$-module.\\
$\bullet$ The socle of $C$ is denoted by $\soc(C)$. Recall $\soc(C)=\Ann_C (\m) \simeq \Hom_R(R/\m, C)$.\\
$\bullet$ The type of $C$ is defined by $\type(C)=\dim_K \soc(C)$.\\
$\bullet$ The minimal number of generators of $C$ is denoted by $\mu(C)$. Note $\mu(C)=\dim_K (C/\m C)$. Lemma~\ref{lemmaDualitySoc} shows that $\mu$ and $\type$ are dual notions.\\
$\bullet$ The length of $C$ is given by
\[
\lambda(C) = \sup \{  n \mid \exists \  0 = C_0 \subseteq C_1 \subseteq \cdots \subseteq C_ n = C, \text{ }C_ i \not= C_{i + 1} \}.
\]
For any maximal chain $0 = C_0 \subsetneq C_1 \subsetneq C_2 \subsetneq \cdots \subsetneq C_t = C$ of submodules, we have $t=\lambda(C)$ and $C_{i}/C_{i-1}$ is simple. In addition, as $R$ is Artinian and $C$ is f.g., then $\lambda(C) < \infty$. The following conditions are equivalent. (1) $C$ is simple, (2) $\lambda(C)=1$, and (3) $C\cong R/\m$. 


\begin{remark}\label{RemLenCardinality}
If $R=K$, then $\lambda(C)=\dim_K (C)$. When $R$ is finite, $|C|=|K|^{\lambda(C)}$.
\end{remark}
\begin{lemma}\label{aditividad}
Let $0 \to C' \to C \to C'' \to 0$ be a short exact sequence of f.g. $R$-modules. The following holds.
\begin{enumerate}
\item $\mu(C)\geq \mu(C'')$.
\item $\type(C)\geq \type(C')$.
\item $\lambda(C)=\lambda(C')+\lambda(C'')$.
\end{enumerate}
\end{lemma}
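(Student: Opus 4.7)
The plan is to treat the three parts separately, using the functorial characterizations of $\mu$, $\type$, and $\lambda$ recalled just above the lemma. Throughout, write the given short exact sequence as
\[
0 \to C' \xrightarrow{\iota} C \xrightarrow{\pi} C'' \to 0.
\]

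For (1), I would invoke $\mu(C)=\dim_K(C/\m C)$ and note that tensoring with $K = R/\m$ is right exact. This yields the exact sequence
\[
C'/\m C' \to C/\m C \to C''/\m C'' \to 0,
\]
so the induced map $C/\m C \twoheadrightarrow C''/\m C''$ is surjective and hence $\dim_K(C''/\m C'') \le \dim_K(C/\m C)$.

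For (2), I would dually use $\soc(C) \simeq \Hom_R(R/\m, C)$ and the left-exactness of $\Hom_R(R/\m,-)$. This gives an exact sequence
\[
0 \to \Hom_R(R/\m, C') \to \Hom_R(R/\m, C) \to \Hom_R(R/\m, C''),
\]
whose first map is injective, so $\dim_K \soc(C') \le \dim_K \soc(C)$. (One can also argue directly: $\soc(C')=C' \cap \Ann_C(\m)$ embeds into $\soc(C)$ via $\iota$.)

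For (3), I would use the standard additivity of length. Pick a composition series $0 = C'_0 \subsetneq C'_1 \subsetneq \cdots \subsetneq C'_s = C'$ of $C'$ and one $0 = C''_0 \subsetneq C''_1 \subsetneq \cdots \subsetneq C''_t = C''$ of $C''$, and let $\widetilde{C}_j := \pi^{-1}(C''_j)$. Concatenating gives a chain
\[
0 = \iota(C'_0) \subsetneq \iota(C'_1) \subsetneq \cdots \subsetneq \iota(C'_s)=\widetilde{C}_0 \subsetneq \widetilde{C}_1 \subsetneq \cdots \subsetneq \widetilde{C}_t = C
\]
of length $s+t$, whose successive quotients are simple (by assumption on each original chain and by the isomorphism $\widetilde{C}_j/\widetilde{C}_{j-1}\simeq C''_j/C''_{j-1}$). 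Hence $\lambda(C)\ge \lambda(C')+\lambda(C'')$, and the reverse inequality follows by observing that any composition series of $C$ refines to (or is cut by $\iota(C')$ into) composition series of $C'$ and $C''$, which is a direct Jordan–Hölder/Schreier-style argument.

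None of the three parts should present a serious obstacle; the only mild subtlety is part (3), where one has to be slightly careful that preimages under $\pi$ preserve strict inclusions and that the simplicity of $\widetilde{C}_j/\widetilde{C}_{j-1}$ follows from the canonical isomorphism to $C''_j/C''_{j-1}$. Everything else is purely formal from exactness of the three functors involved ($-\otimes_R K$, $\Hom_R(K,-)$, and length).
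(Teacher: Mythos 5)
Your proof is correct; note that the paper itself states this lemma without proof, as a standard fact cited to the commutative algebra references (Bruns--Herzog, Eisenbud, Matsumura), and your argument --- right-exactness of $-\otimes_R K$ for (1), left-exactness of $\Hom_R(K,-)$ for (2), and the concatenation/Jordan--H\"older argument for additivity of length in (3) --- is exactly the standard textbook proof being appealed to. The only part you leave as a sketch is the reverse inequality in (3) (intersecting a chain of $C$ with $\iota(C')$ and projecting to $C''$, and checking that at each strict step at least one of the two induced chains moves), which is routine and fine to cite or carry out in a line.
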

Let $E_R(K)$ be the injective hull of $K=R/\m$ as an $R$-module. When the context is clear, we write $E_R$, or even $E$, instead of $E_R(K)$.
\begin{theorem}
We have
$\lambda(E_{R}) < \infty$ and $\lambda(E_{R}) = \lambda(R)$.
\end{theorem}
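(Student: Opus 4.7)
The plan is to prove both assertions simultaneously by showing that the Matlis-duality functor $(-)^\vee := \Hom_R(-,E_R)$ preserves length on finitely generated $R$-modules, and then applying this to $M=R$. Since $\Hom_R(R,E_R)\cong E_R$ as $R$-modules, this will yield $\lambda(E_R)=\lambda(R)$, and the finiteness $\lambda(E_R)<\infty$ comes for free, because $R$ is Artinian and so $\lambda(R)<\infty$.

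First, I would establish the base identity $\Hom_R(K,E_R)\cong K$. By the socle description recalled at the start of the section, $\Hom_R(R/\m,E_R)\cong \soc(E_R)$. Since $E_R$ is the injective hull of $K$, the submodule $K\subseteq E_R$ is essential and lies inside the socle. If $\soc(E_R)$ strictly contained $K$, then as a $K$-vector space it would admit a one-dimensional complement $V$ of $K$, producing a nonzero submodule $V\subseteq E_R$ with $V\cap K=0$ and contradicting essentiality. Hence $\soc(E_R)=K$ and $\Hom_R(K,E_R)$ has length $1$.

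Next, I would prove $\lambda(M^\vee)=\lambda(M)$ for every finitely generated $R$-module $M$ by induction on $\lambda(M)$. The base case $\lambda(M)=1$ forces $M\cong K$ and has just been handled. For the inductive step, choose any simple submodule $S\subseteq M$ (which exists because $M$ is nonzero of finite length, so $\soc(M)\neq 0$). Applying the exact functor $(-)^\vee$ (exact because $E_R$ is injective) to
$$0\to S\to M\to M/S\to 0$$
yields the short exact sequence
$$0\to (M/S)^\vee\to M^\vee\to S^\vee\to 0.$$
By Lemma~\ref{aditividad}(3), the induction hypothesis, and the base case,
$$\lambda(M^\vee)=\lambda((M/S)^\vee)+\lambda(S^\vee)=(\lambda(M)-1)+1=\lambda(M).$$

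Finally, specializing to $M=R$ gives $\lambda(E_R)=\lambda(\Hom_R(R,E_R))=\lambda(R)<\infty$, proving both claims at once. The main technical point I anticipate is the socle computation $\soc(E_R)=K$ in the base case, which genuinely exploits the definition of $E_R$ as an essential extension of $K$; once that is in place, the rest is formal bookkeeping using exactness of Hom into an injective and the length additivity provided by Lemma~\ref{aditividad}.
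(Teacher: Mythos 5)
Your proposal is correct. The paper does not prove this statement at all: it appears in the preliminaries as a standard fact, with the reader referred to \cite{BrunsHerzog,Eisenbud,Matsumura}, and the length-preservation of Matlis duality that you establish along the way is precisely the paper's (also unproven) Lemma~\ref{lengthMatlis}. So in effect you have supplied the standard textbook argument behind both statements: $\soc(E_R)=K$ via essentiality of $K\subseteq E_R$, then induction on length using exactness of $\Hom_R(-,E_R)$ and additivity of $\lambda$, and finally $R^\vee\cong E_R$. The argument is sound; no circularity arises, since the induction only uses injectivity of $E_R$ and the socle computation, not any finiteness of $E_R$. Two cosmetic points you may wish to tighten: in the base case, what you need is a nonzero $K$-subspace $V\subseteq\soc(E_R)$ with $V\cap K=0$ (any line through a vector outside the copy of $K$ works); calling it a ``one-dimensional complement'' is slightly off when $\dim_K\soc(E_R)>2$, though the conclusion is unaffected. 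And in the inductive step, Lemma~\ref{aditividad}(3) is stated for finitely generated modules, so it is cleanest to note that an extension of two finite-length modules has finite length (hence is finitely generated over the Artinian ring $R$), after which the additivity applies verbatim.
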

\begin{definition}
The Matlis dual of $C$ is denoted by $(-)^{\vee}=\Hom_{R}(-,E_R)$.
\end{definition}
As $R$ is Artinian, the functor $(-)^\vee$ gives a category equivalence between f.g. $R$-modules to itself.

\begin{lemma}\label{lengthMatlis}
We have $\lambda(C)=\lambda(C^\vee)$.
\end{lemma}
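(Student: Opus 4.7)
The plan is to induct on $\lambda(C)$, using the fact that Matlis duality is exact (because $E_R$ is injective) together with the additivity of length on short exact sequences given by Lemma~\ref{aditividad}(3).

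For the base case $\lambda(C) = 0$, we have $C = 0$, so $C^\vee = 0$ and both lengths are zero. When $\lambda(C) = 1$, $C \cong R/\m = K$, and
\[
C^\vee = \Hom_R(R/\m, E_R) = \soc(E_R).
\]
Since $E_R$ is the injective hull of $K$, its socle is $K$ itself, so $C^\vee \cong K$ and $\lambda(C^\vee) = 1$. This is the only step that uses something beyond formal properties of the functor, and it is the place where the definition of $E_R$ as the injective hull of $K$ enters.

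For the inductive step, assume the statement holds for all finitely generated modules of length less than $n$, and suppose $\lambda(C) = n \geq 2$. Since $C$ is f.g.\ and nonzero, it admits a simple submodule $C' \subseteq C$, giving a short exact sequence
\[
0 \to C' \to C \to C'' \to 0
\]
with $\lambda(C') = 1$ and, by Lemma~\ref{aditividad}(3), $\lambda(C'') = n - 1$. Applying the exact functor $(-)^\vee$ yields
\[
0 \to (C'')^\vee \to C^\vee \to (C')^\vee \to 0.
\]
By the inductive hypothesis, $\lambda((C'')^\vee) = n-1$ and $\lambda((C')^\vee) = 1$, and another application of Lemma~\ref{aditividad}(3) gives $\lambda(C^\vee) = n$, as desired.

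There is no real obstacle here; the argument is essentially a bookkeeping exercise once one notes that (i) $(-)^\vee$ is exact, (ii) length is additive on short exact sequences, and (iii) $K^\vee \cong K$. The mildest technical point is identifying $\soc(E_R)$ with $K$, which follows directly from the characterization $\soc(M) = \Hom_R(R/\m, M)$ and the fact that $E_R$ is the injective hull of $K$.
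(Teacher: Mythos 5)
Your proof is correct: the induction on $\lambda(C)$, using exactness of $\Hom_R(-,E_R)$, additivity of length, and the base case $K^\vee=\Hom_R(R/\m,E_R)\cong\soc(E_R)\cong K$, is exactly the standard argument that the paper implicitly relies on when it states this lemma without proof and refers to the textbook references in Section~\ref{SecPre}. Nothing is missing; the identification $\soc(E_R)\cong K$ via essentiality of $K\subseteq E_R$ is the only nontrivial input and you justify it correctly.
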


We now review Frobenius rings and list some of their main properties. For details about Frobenius rings, the interested reader is referred to the books by Bruns and Herzog~\cite{BrHe} and Lam~\cite{Lam2012-jr}, as well as the paper by Bass~\cite{Bass}.

In general, an arbitrary ring $Q$ is called quasi-Frobenius (QF) if $Q$ is Noetherian and self-injective ($Q$ is injective as a $Q$-module). Then, $Q$ is called Frobenius if $Q$ is QF and $\soc(Q_Q) \cong \overline{Q}_Q$, where $\overline{Q}=Q/rad(Q)$. By Lam~\cite[Theorem 16.14]{Lam2012-jr}, $Q$ is Frobenius if and only if $\soc(Q_Q) \cong \overline{Q}_Q$ and $\soc(\leftidx{_Q}Q) \cong \leftidx{_Q}~\overline{Q}$. In our case, $R$ is a commutative local ring, so we adopt the following definition.
\begin{definition}
A ring $R$ is Frobenius if $\soc(R) \cong K$.
\end{definition}
In the context of commutative algebra, $R$ is a Gorenstein ring when it is self-injective. This is equivalent to saying that $\soc(R) \cong K$. In other words, Frobenius rings are the same as Gorenstein rings.

\begin{lemma}\label{Lemma2.7}
$R$ is Frobenius if and only if $\Hom_R(-,R)$ is an exact functor. Equivalently, $R\cong E_R$.
\end{lemma}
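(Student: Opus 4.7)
The plan is to establish the chain of equivalences: $R$ is Frobenius $\iff$ $R\cong E_R$ $\iff$ $\Hom_R(-,R)$ is exact. The easy observation is that $\Hom_R(-,R)$ is exact if and only if $R$ is injective as an $R$-module, and since $E_R$ is injective by construction, the implication $R\cong E_R \Rightarrow \Hom_R(-,R)$ exact is immediate. I would then split the remaining work into the two nontrivial directions.

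For $\Hom_R(-,R)$ exact $\Rightarrow R\cong E_R$, I would invoke the Matlis structure theorem for injective modules over a Noetherian ring, specialized to the Artinian local case: since $R$ has a unique simple module $K$, the only indecomposable injective (up to isomorphism) is $E_R$, and every injective $R$-module is a direct sum of copies of $E_R$. As $R$ is finitely generated, the decomposition must be finite, so $R\cong E_R^{n}$ for some $n\geq 1$. Applying the additivity of length (Lemma~\ref{aditividad}) together with the already-stated equality $\lambda(E_R)=\lambda(R)$ gives $\lambda(R)=n\,\lambda(R)$, forcing $n=1$ and hence $R\cong E_R$.

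For $R\cong E_R \iff R$ is Frobenius, I would first show $\soc(E_R)\cong K$: any nonzero $y\in\soc(E_R)$ generates a simple submodule $Ry\cong R/\m\cong K$, and since $E_R=E(K)$ is an essential extension of $K$, the submodule $Ry$ meets $K$ nontrivially, so $Ry\subseteq K$. Thus $\soc(E_R)=K$, and $R\cong E_R$ immediately yields $\soc(R)\cong K$, i.e.\ $R$ is Frobenius. Conversely, suppose $\soc(R)\cong K$. Let $E(R)$ denote the injective hull of $R$. By the structure theorem again, $E(R)\cong E_R^{k}$ for some $k\geq 1$, and since $R\hookrightarrow E(R)$ is essential, the same socle-catching argument gives $\soc(E(R))=\soc(R)$. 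Comparing $K$-dimensions, $k=\dim_K\soc(E_R^{k})=\dim_K\soc(R)=1$, so $E(R)\cong E_R$. The inclusion $R\hookrightarrow E_R$ is then an injection between modules of equal (finite) length $\lambda(R)=\lambda(E_R)$, and Lemma~\ref{aditividad} forces it to be an isomorphism.

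The main obstacle I anticipate is simply being clean about what machinery to cite: the Matlis structure theorem for injective modules over an Artinian local ring (that every injective is a direct sum of copies of $E_R$) is the central input, together with the preservation of socle under essential extensions. Everything else is a length comparison using the results already established in Section~\ref{SecPre}.
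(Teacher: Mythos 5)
Your argument is correct. Note that the paper offers no proof of Lemma~\ref{Lemma2.7} at all: it is stated in Section~\ref{SecPre} as standard background, with the reader referred to the textbooks cited there, so there is no ``paper proof'' to compare against. What you give is a correct, self-contained version of the standard Matlis-theoretic argument: the identification of exactness of $\Hom_R(-,R)$ with injectivity of $R$, the decomposition of an injective over an Artinian local ring into copies of the unique indecomposable injective $E_R$, the length comparison $\lambda(E_R)=\lambda(R)$ to pin the number of summands to one, and the essential-extension/socle argument giving $\soc(E_R)\cong K$ and $\soc(E(R))=\soc(R)$. Two harmless glosses: in the converse direction the finiteness of $k$ in $E(R)\cong E_R^{k}$ is not handed to you by the structure theorem alone, but it follows immediately from your own socle comparison (an infinite sum would have infinite-dimensional socle, whereas $\soc(E(R))=\soc(R)$ is finite-dimensional); and that same direction admits a slightly shorter route, since $\soc(R)\cong K$ simple makes $K\subseteq R$ an essential extension, whence $R\hookrightarrow E(K)=E_R$ directly and the length count finishes. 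Neither affects correctness.
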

The following result helps to construct Frobenius rings.
\begin{proposition}
Let $(S, \n, L)$ be a Noetherian regular local ring of dimension $d$. If $x_{1},\ldots,x_{d}$ is a regular sequence in $S$, then $S/(x_{1},\ldots,x_{t})S$ is Frobenius ring.
\end{proposition}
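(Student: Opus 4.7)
The plan is to verify the defining condition $\soc(R) \cong K$ for $R = S/(x_1, \ldots, x_d)S$ (reading the $t$ in the statement as a typo for $d$, the full regular sequence). First I would note that because $x_1, \ldots, x_d$ is an $S$-regular sequence whose length equals $\dim S$, the quotient $R$ is zero-dimensional, and being a Noetherian quotient it is Artinian, so the paper's Frobenius definition applies in its stated form.

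The core of the argument will be a Koszul-duality plus change-of-rings computation. Since $(x_1, \ldots, x_d)$ is an $S$-regular sequence, the Koszul complex $K_\bullet(\underline{x}; S)$ is a free $S$-resolution of $R$. Dualizing via $\Hom_S(-, S)$ and using the self-duality of the Koszul complex up to a shift by $d$, one obtains $\Ext^i_S(R, S) = 0$ for $i < d$ and $\Ext^d_S(R, S) \cong R$. A standard change-of-rings argument (available in Bruns--Herzog) then yields the isomorphism
\[
\Hom_R(M, R) \;\cong\; \Ext^d_S(M, S),
\]
natural in the $R$-module $M$. Specializing to $M = K$ and invoking the classical identity $\Ext^d_S(K, S) \cong K$ for a regular local ring $S$ of dimension $d$ (an expression of the fact that $S$ has injective dimension $d$ with top $\Ext$ of $K$ equal to $K$), I conclude
\[
\soc(R) \;=\; \Hom_R(K, R) \;\cong\; \Ext^d_S(K, S) \;\cong\; K,
\]
which is precisely the Frobenius condition.

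The main obstacle is largely bookkeeping: one must cite rather than reprove the change-of-rings formula $\Hom_R(M, R) \cong \Ext^d_S(M, S)$, but this is classical. A conceptually cleaner alternative I would consider presenting instead is to observe that a regular local ring is Gorenstein with canonical module isomorphic to itself, and that canonical modules pass well through quotients by regular sequences, giving $\omega_R \cong \omega_S/(\underline{x})\omega_S \cong R$. Since the discussion preceding the statement already identifies Gorenstein Artinian local rings with Frobenius rings in the paper's sense, this alternative route also concludes the proof.
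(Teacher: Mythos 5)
Your proof is correct. The paper itself states this proposition without proof, as a standard preliminary fact cited to the commutative algebra literature, so there is no internal argument of the paper to compare against. Your decision to read the $t$ in the statement as $d$ is the right one: only when the full regular sequence of length $d=\dim S$ is quotiented out is $R=S/(x_1,\ldots,x_d)S$ zero-dimensional, which is what the paper's local Artinian notion of Frobenius ($\soc(R)\cong K$) requires. Your main route is sound: Koszul self-duality gives $\Ext^i_S(R,S)=0$ for $i\neq d$ and $\Ext^d_S(R,S)\cong R$, and the change-of-rings isomorphism you invoke is exactly Rees's lemma (Bruns--Herzog, Lemma 3.1.16), which for an $R$-module $M$ killed by the regular sequence yields $\Ext^d_S(M,S)\cong\Hom_R(M,R)$; specializing to the residue field and using $\Ext^d_S(K,S)\cong K$ for the regular local ring $S$ gives $\soc(R)=\Hom_R(K,R)\cong K$, the defining condition. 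Your alternative argument, via $\omega_S\cong S$ for a regular local ring and the fact that the canonical module passes to quotients by regular sequences, so that $\omega_R\cong\omega_S/(x_1,\ldots,x_d)\omega_S\cong R$, is equally standard and is the route most directly aligned with the paper's identification of Frobenius rings with Gorenstein Artinian rings; either version would serve as a complete justification of the statement.
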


\begin{example}
The following rings are examples of zero-dimensional Frobenius local rings.
\begin{enumerate}
\item A field.
\item An Artinian chain ring.
\item $\frac{K[x_1,\ldots,x_t]}{(x^{\alpha_1}_1,\ldots,x^{\alpha_t}_t)}$, where $K$ is any field.
\item $\frac{\ZZ}{(p^e)}$.
\end{enumerate}
\end{example}

It is worth noting that non-local Frobenius rings are also relevant in coding theory. Some examples are quasi-cyclic codes over $R = K[x] / \langle x^{m} - 1 \rangle$, where $K$ is a finite field~\cite{QuasiCyclic1,QuasiCyclic2}, cyclic codes over Galois rings~\cite{GaloisRings}, and $\frac{K[x_1,\ldots,x_t]}{(f_1(x_1),\ldots, f_t(x_t))}$~\cite{COAAFCCR}, where $f_i(x_i) \in K[x_i]$ is a monic polynomial.

\section{Codes over Frobenius rings (not necessarily finite)}\label{Sect Frob}
In this section, $(R,\m,K)$ denotes an Artinian Frobenius local ring and $C$ is an $R$-code of length $n$ (in the coding-theoretic sense), meaning that $C$ is an $R$-submodule of $R^n$. Note that if $R$ is Artinian, then it is Noetherian and has Krull dimension zero.

\begin{remark}
We point out that a Frobenius ring is a finite product of Frobenius local rings. So, the results of this section can be generalized to Artinian rings that are Frobenius.    
\end{remark}

The minimum distance of $C$ (when $C\neq 0$) is defined by 
\[
\hd(C)=\min\{\w_H(v)\;|\; v\in C\setminus{0}\},
\]
where \(\w_H(v)=|\{i\;|\; v_{i}\neq 0\}|\)
denotes the Hamming weight of $v$.
\begin{definition}
A bilinear product on $R^n$ is given by
$$
v\cdot w=v_1w_1+\ldots+v_nw_n,
$$
where $v=(v_1,\ldots,v_n)$ and $w=(w_1,\ldots,w_n)$ are elements in $R^n$. The orthogonal code of $C$ over $R$ is defined by
\[
C^{\perp_R}=\{v\in R^{n} \mid v\cdot w=0 \text{ for all }w\in C\}.
\]
When the context is clear, we use $C^{\perp}$ instead of $C^{\perp_R}$.
\end{definition}

We start by giving some properties that an $R$-code shares with its socle. The following result shows that the minimum distance of a code depends only on the socle of the code.
\begin{proposition}\label{LemmaDist-DistSoocle}
We have that 
\(\hd(C)=\hd(\soc(C))\).
\end{proposition}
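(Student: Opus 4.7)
The plan is to prove the two inequalities $\hd(C)\le \hd(\soc(C))$ and $\hd(\soc(C))\le \hd(C)$ separately. The first is immediate from the definitions: since $\soc(C)\subseteq C$, every nonzero element of $\soc(C)$ is a nonzero element of $C$, so taking the minimum Hamming weight over a smaller nonempty set can only make it larger. Hence $\hd(C)\le \hd(\soc(C))$.

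For the reverse inequality, the key observation is that multiplying a codeword by a ring element never increases its Hamming weight: if $v\in R^n$ and $r\in R$, then the support of $rv$ is contained in the support of $v$, so $\w_H(rv)\le \w_H(v)$. I would then pick a minimum-weight codeword $v\in C\setminus\{0\}$ with $\w_H(v)=\hd(C)$, and consider the cyclic submodule $M=Rv\subseteq C$. Since $M$ is a nonzero finitely generated module over the Artinian local ring $R$, its socle $\soc(M)=\Ann_M(\m)$ is nonzero (any minimal nonzero submodule of $M$ is annihilated by $\m$). Moreover $\soc(M)\subseteq \soc(C)$, because the condition of being annihilated by $\m$ is intrinsic and does not depend on the ambient module.

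Now any nonzero $w\in \soc(M)$ is of the form $w=rv$ for some $r\in R$, so by the support observation above,
\[
\w_H(w)\le \w_H(v)=\hd(C).
\]
Since $w\in \soc(C)\setminus\{0\}$, this gives $\hd(\soc(C))\le \w_H(w)\le \hd(C)$, completing the proof.

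I do not expect any serious obstacle here. The only point that requires a brief justification is the nonvanishing of $\soc(M)$ for a nonzero finitely generated module over an Artinian local ring, which can be justified either by citing that $\soc(M)\ne 0$ for any nonzero module of finite length, or by taking the last nonzero term $C_1$ in a composition series $0=C_0\subsetneq C_1\subsetneq\cdots\subsetneq M$ and noting that $C_1\cong R/\m$ sits inside $\soc(M)$.
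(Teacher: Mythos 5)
Your proof is correct and follows essentially the same route as the paper: the inclusion $\soc(C)\subseteq C$ gives one inequality, and for the other one takes a minimum-weight codeword $v$, uses that the nonzero cyclic module $Rv$ meets $\soc(C)$ nontrivially (the paper states $(Rv)\cap\soc(C)\neq 0$, which is exactly your $\soc(Rv)\neq 0$ observation), and concludes via $\w_H(rv)\le\w_H(v)$. Your explicit justification of the nonvanishing of the socle is a nice touch but the argument is the same.
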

\begin{proof}
Since $\soc(C)\subseteq C$, we have that
\(\hd(\soc(C))\geq \hd(C).\)
Let $v\in C$ such that $w_H(v)=\hd(C)$. We have $(Rv)\cap\soc(C)\neq 0$. Thus, there exists $r\in R$
such that $rv\in\soc(C)$. Consequently,
\[
\hd(C)=\w_H(v)\geq \w_H(rv)\geq \hd(\soc(C)),
\]
which completes the proof.
\end{proof}




\begin{lemma}\label{lemmaDualitySoc}
We have that $\soc(C)^{\vee} \cong C^{\vee}/\m C^{\vee}$ and $\type(C) = \mu(C^\vee)$.
\end{lemma}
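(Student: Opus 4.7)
The plan is to prove, for any finitely generated $R$-module $M$, a natural isomorphism
$$\soc(M^\vee) \cong (M/\m M)^\vee,$$
and then apply it with $M = C^\vee$. The starting point is the identification $\soc(N) = \Hom_R(K,N)$ recalled in Section~\ref{SecPre}. Using this together with Hom-tensor adjunction,
$$\soc(M^\vee) = \Hom_R(K, \Hom_R(M, E_R)) \cong \Hom_R(K \otimes_R M, E_R) = (M/\m M)^\vee.$$
This is the core computation; it requires no hypothesis beyond $R$ being an Artinian local ring.

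Next, I would apply Matlis duality to both sides of this isomorphism. Since $R$ is Artinian, $(-)^\vee$ is a contravariant self-equivalence on finitely generated $R$-modules (as recalled after Lemma~\ref{lengthMatlis}), so $(M/\m M)^{\vee\vee} \cong M/\m M$. This yields
$$M/\m M \cong \soc(M^\vee)^\vee.$$
Now substitute $M = C^\vee$ and use reflexivity $C^{\vee\vee} \cong C$ to obtain
$$C^\vee / \m C^\vee \cong \soc(C^{\vee\vee})^\vee \cong \soc(C)^\vee,$$
which is the first half of the lemma.

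For the second half, take $K$-dimensions. Both $C^\vee/\m C^\vee$ and $\soc(C)^\vee$ are $K$-vector spaces, and the Matlis dual of a $K$-vector space has the same $K$-dimension (this follows from Lemma~\ref{lengthMatlis} together with the fact that length equals $K$-dimension on modules killed by $\m$). Hence
$$\mu(C^\vee) = \dim_K (C^\vee/\m C^\vee) = \dim_K \soc(C)^\vee = \dim_K \soc(C) = \type(C).$$

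I do not expect a serious obstacle: the argument is essentially bookkeeping around Matlis duality. The only subtle point is that the most naive approach---dualizing the inclusion $\soc(C) \hookrightarrow C$ to get a surjection $C^\vee \twoheadrightarrow \soc(C)^\vee$ and trying to identify its kernel with $\m C^\vee$ by hand---risks becoming circular; this is why I would go through the general identity for $\soc(M^\vee)$ and then set $M = C^\vee$, so that Matlis reflexivity carries the burden of the identification.
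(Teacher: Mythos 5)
Your proposal is correct, but it takes a different route from the paper. The paper's proof is explicit: it chooses generators $f_1,\ldots,f_t$ of $\m$, forms the exact sequence $0\to\soc(C)\to C\xrightarrow{\ \varphi\ }C^t$ with $\varphi(w)=(f_1w,\ldots,f_tw)$, applies $(-)^\vee$, and identifies $\soc(C)^\vee$ with the cokernel of $\varphi^\vee$, which is visibly $C^\vee/\m C^\vee$; the numerical statement then follows from $\mu(C^\vee)=\lambda(C^\vee/\m C^\vee)$ and $\lambda((-)^\vee)=\lambda(-)$, exactly as in your last display. You instead prove the ``mirror'' identity $\soc(M^\vee)\cong(M/\m M)^\vee$ by Hom--tensor adjunction, $\Hom_R(K,\Hom_R(M,E_R))\cong\Hom_R(K\otimes_R M,E_R)$, and then recover the lemma by dualizing and substituting $M=C^\vee$, invoking Matlis reflexivity $C^{\vee\vee}\cong C$ and $(M/\m M)^{\vee\vee}\cong M/\m M$ (legitimate, since all modules involved have finite length over the Artinian ring $R$). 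Your argument is coordinate-free and avoids choosing generators of $\m$, at the cost of an extra pass through reflexivity; the paper's argument gets the isomorphism in one step from exactness of $(-)^\vee$ but is tied to an explicit presentation of the socle. Both correctly use only that $R$ is Artinian local (the Frobenius hypothesis of the section is not needed), and your final dimension count matches the paper's.
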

\begin{proof}
Let $f_{1},\ldots,f_{t}\in\m$ be generators of $\m$. Consider the exact sequence
\[
\xymatrix{
0\ar[r] & \soc(C)\ar[r] & C\ar[r]^-{\varphi} & C^{t},
}
\]
where $\varphi(w)=(f_{1}w,\ldots,f_{t}w)$. After applying Matlis duality, we obtain the exact sequence
\[
\xymatrix{
0 & \soc(C)^{\vee}\ar[l] & C^{\vee}\ar[l] & (C^{\vee})^{t}\ar[l]_{\varphi^{\vee}},
}
\]
where $\varphi^{\vee}(z_{1},\ldots,z_{t})=f_{1}z_{1}+\cdots+f_{t}z_{t}$. Since $\ck(\varphi^{\vee})=C^{\vee}/\m C^{\vee}$,
it follows that $C^{\vee}/\m C^{\vee}\cong\soc(C)^{\vee}$.
Then,
$$\mu(C^\vee)=\lambda(C^\vee/\m C^\vee)=\lambda(\soc(C)^\vee)=\lambda(\soc(C))=\type(C),$$
which completes the proof.
\end{proof}

Let $\rho:R^n \to K^n$ be the projection given by identifying $K\cong R/\m$. Note that the image $\rho(C)$ of an $R$-code $C\subseteq R^n$ is a $K$-linear subspace of $K^n$. Fix a generator $z$ for $\soc(R)$. The identification $K\cong Rz=\Ann_R \m$ naturally extends to $\alpha:K^n\to \soc(R^n)$. Since $\soc(C)\subseteq \soc(R^n)$, we identify $\soc(C)$ with $\alpha^{-1}(\soc(C))$, which is a $K$-linear subspace of $K^n$.
\begin{lemma}\label{LemmaTypeFreeRank}
We have that
\[
\alpha(\soc(C))^{\perp_K}=\rho(C^{\perp_R}).
\]
That is, the $K$-orthogonal of $\soc(C)$ coincides with the $\rho$-image of the $R$-orthogonal of $C$.
\end{lemma}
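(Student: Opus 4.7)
The plan is to establish the equality by proving the two inclusions separately: the inclusion $\rho(C^{\perp_R})\subseteq \alpha(\soc(C))^{\perp_K}$ follows from a direct manipulation of the bilinear form, while the reverse inclusion is obtained by matching $K$-dimensions, using the Frobenius hypothesis in an essential way. For the easy direction, take $v\in C^{\perp_R}$ and any $u\in K^n$ with $\alpha(u)\in \soc(C)$. Choosing a lift $\tilde u\in R^n$ of $u$, we have $\alpha(u)=z\tilde u$, so from $\alpha(u)\in C$ the relation $v\cdot \alpha(u)=0$ becomes $z(v\cdot\tilde u)=0$ in $R$. Since the map $K\to Rz=\soc(R)$ given by $\bar r\mapsto rz$ is an isomorphism (as $\soc(R)\cong K$ by the Frobenius assumption), this forces $v\cdot\tilde u\in\m$, i.e.\ $\rho(v)\cdot u=0$ in $K$, which is exactly the desired orthogonality.

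For the reverse inclusion I would count dimensions. Clearly $\dim_K \alpha(\soc(C))^{\perp_K}=n-\dim_K \soc(C)=n-\type(C)$. To compute the dimension of $\rho(C^{\perp_R})$, note that the standard isomorphism theorem gives
\[
K^n/\rho(C^{\perp_R})\;\cong\;(R^n/C^{\perp_R})\big/\m(R^n/C^{\perp_R}),
\]
so $\dim_K \rho(C^{\perp_R})=n-\mu(R^n/C^{\perp_R})$. Now I apply $\Hom_R(-,R)$ to the short exact sequence $0\to C\to R^n\to R^n/C\to 0$. Since $R$ is Frobenius, Lemma~\ref{Lemma2.7} says $R$ is injective, so the dual sequence stays exact. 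Under the identification $\Hom_R(R^n,R)\cong R^n$ coming from the bilinear pairing, the image of $\Hom_R(R^n/C,R)$ is precisely $C^{\perp_R}$, so the exact sequence collapses to $R^n/C^{\perp_R}\cong \Hom_R(C,R)$. Using $R\cong E_R$ (Lemma~\ref{Lemma2.7}) once more, $\Hom_R(C,R)\cong C^\vee$, and Lemma~\ref{lemmaDualitySoc} gives $\mu(C^\vee)=\type(C)$. Therefore $\mu(R^n/C^{\perp_R})=\type(C)$, which yields $\dim_K \rho(C^{\perp_R})=n-\type(C)=\dim_K \alpha(\soc(C))^{\perp_K}$. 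Combined with the inclusion from the first paragraph, this gives equality.

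The main obstacle is the reverse inclusion: there is no obvious constructive way to lift an arbitrary element of $K^n$ that is $K$-orthogonal to $\alpha^{-1}(\soc(C))$ to an element of $C^{\perp_R}$, because different lifts differ by elements of $\m R^n$ whose pairing with $C$ need not vanish. The Frobenius hypothesis is what resolves this: it turns $\Hom_R(-,R)$ into an exact duality, which converts $C$ into the quotient $R^n/C^{\perp_R}$ and, through Matlis duality, transports the minimal number of generators of this quotient to the type of $C$. Without injectivity of $R$ this chain of isomorphisms breaks down, so the argument genuinely uses that we are working over a Frobenius local ring.
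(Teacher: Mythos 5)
Your proof is correct, but your treatment of the hard inclusion differs genuinely from the paper's. For the inclusion $\rho(C^{\perp_R})\subseteq\alpha(\soc(C))^{\perp_K}$ you argue exactly as the paper does (pair a socle element $z\tilde u$ against $v\in C^{\perp_R}$ and use $\Ann_R(z)=\m$). For the reverse inclusion, the paper gives an element-wise argument: it lifts a vector of $(\rho(C^{\perp_R}))^{\perp_K}$ into $\soc(R^n)$ via $z$, checks it annihilates $C^{\perp_R}$, and invokes the double-annihilator identity $(C^{\perp_R})^{\perp_R}=C$ (a standard quasi-Frobenius fact used without proof) to place it in $\soc(C)$. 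You instead count dimensions: from $K^n/\rho(C^{\perp_R})\cong(R^n/C^{\perp_R})/\m(R^n/C^{\perp_R})$ and the chain $R^n/C^{\perp_R}\cong\Hom_R(C,R)\cong C^{\vee}$ (exactness of $\Hom_R(-,R)$ and $R\cong E_R$ from Lemma~\ref{Lemma2.7}, as in the proof of Lemma~\ref{Lemma_Dimension_analogous}) together with $\mu(C^{\vee})=\type(C)$ from Lemma~\ref{lemmaDualitySoc}, you get $\dim_K\rho(C^{\perp_R})=n-\type(C)$, matching $\dim_K\alpha(\soc(C))^{\perp_K}$. There is no circularity, since Lemmas~\ref{Lemma2.7} and~\ref{lemmaDualitySoc} and the isomorphism $R^n/C^{\perp_R}\cong C^{\vee}$ do not depend on this lemma. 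What each approach buys: the paper's argument is self-contained at the level of elements (modulo the biduality $(C^\perp)^\perp=C$) and produces the explicit identification used later, while yours avoids biduality altogether and in fact already yields the equality $\dim_K\rho(C^{\perp_R})=n-\type(C)$, which, combined with $\fr(C^{\perp})=\dim_K\rho(C^{\perp})$ (Lemma~\ref{freerank}), is essentially the forward direction of Theorem~\ref{ThmTypeFreeRank} itself.
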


\begin{proof}
($\subseteq$) Take $u=([u_{1}],\ldots,[u_{n}])\in\alpha(\soc(C))\subseteq K^n$ and $v=(v_{1},\ldots,v_{n})\in C^{\perp_R}$. Then, $(u_{1}z,\ldots,u_{n}z)\cdot v=(\sum_{i=1}^{n}u_{i}v_{i})z=0$ because $(u_{1}z,\ldots,u_{n}z)\in C$. Note that $\rho(v)=([v_1],\ldots,[v_n])$. We have that $u\cdot v=\sum_{i=1}^{n}[u_{i}][v_{i}]=[\sum_{i=1}^{n}u_{i}v_{i}]$. Since $0=z\cdot (\sum_{i=1}^{n}u_{i}v_{i})$, we obtain that $\sum_{i=1}^{n}u_{i}v_{i}\in \Ann_R z\subseteq \m $. Thus, $\sum_{i=1}^{n}[u_{i}v_{i}]=0$. We conclude that $u\in (\rho(C^{\perp_R}))^{\perp_K}$.

($\supseteq$) Take $a=(a_{1},\ldots,a_{n})\in C^{\perp}$ and $\rho(b)=([b_{1}],\ldots,[b_{n}])\in (\rho(C^{\perp_R}))^{\perp_K}$. Since $[0]=\rho(a)\cdot\rho(b)=\sum_{i=1}^{n}[a_{i}][b_{i}]=\sum_{i=1}^{n}[a_{i}b_{i}]$, we have $\sum_{i=1}^{n}a_{i}b_{i}\in \m$. Observe that $a\cdot\alpha^{-1}(\rho( b))=\sum_{i=1}^{n}a_{i}b_{i}z=0$ because $\m z=0$ by our choice of $z$. 
This implies that $\alpha^{-1}(\rho( b))\in (C^{\perp_R})^{\perp_R}=C$ and $\alpha^{-1}(\rho( b))\in \soc(R^n)$. Therefore, $ \alpha^{-1}(\rho( b))\in \soc(C)$, and so,  $\rho( b)\in \alpha(C)$. This concludes the proof.
\end{proof}

We now prove that the property of $R$ of being Frobenius can be translated to the properties of $R$-codes. Some of these results were known previously only for finite rings.

In the finite ring setting, codes over Frobenius rings have been amply studied since these codes satisfy both the MacWilliams extension theorem and the property that $|C|\cdot |C^\perp|=|R^n|$ for any $C\subseteq R^n$. The following lemma provides an extension of this result for codes over infinite rings. The proof relies on the Matlis Duality. By Remark~\ref{RemLenCardinality}, we recover the classical case over finite Frobenius rings.
\begin{lemma}\label{Lemma_Dimension_analogous} 
We have that $R$ is a Frobenius ring if and only if
$$\lambda(C)+\lambda(C^{\perp})=n \lambda(R)$$
for every code  $C\subseteq R^{n}$.
\end{lemma}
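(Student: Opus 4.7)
The plan is to establish both directions by exploiting the identification of the Matlis dual with the $R$-linear dual in the Frobenius case, together with the additivity of length.

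For the forward direction, assume $R$ is Frobenius. By Lemma~\ref{Lemma2.7} we have $R \cong E_R$, so the Matlis dual $(-)^\vee = \Hom_R(-, E_R)$ agrees (naturally) with $\Hom_R(-, R)$, and this functor is exact. Starting from the tautological short exact sequence
\[
0 \to C \to R^n \to R^n/C \to 0,
\]
applying $\Hom_R(-, R)$ yields an exact sequence
\[
0 \to \Hom_R(R^n/C, R) \to \Hom_R(R^n, R) \to C^\vee \to 0.
\]
I would then identify the first two terms. The bilinear product on $R^n$ gives a canonical isomorphism $R^n \to \Hom_R(R^n, R)$ sending $v$ to the functional $w \mapsto v\cdot w$. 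Under this identification, an element $v \in R^n$ lies in the kernel of the restriction to $C$ if and only if $v \cdot w = 0$ for every $w \in C$, which is exactly $C^\perp$. Hence there is a short exact sequence
\[
0 \to C^\perp \to R^n \to C^\vee \to 0.
\]

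From here the computation is immediate. By Lemma~\ref{aditividad}(3), $\lambda(R^n) = \lambda(C^\perp) + \lambda(C^\vee)$. Since $\lambda(R^n) = n\lambda(R)$ and, by Lemma~\ref{lengthMatlis}, $\lambda(C^\vee) = \lambda(C)$, we conclude $\lambda(C) + \lambda(C^\perp) = n\lambda(R)$.

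For the converse, it suffices to test the hypothesis on a single well-chosen code. Take $n = 1$ and $C = \m \subseteq R$. Directly from the definition, $C^\perp = \{r \in R \mid r\m = 0\} = \Ann_R(\m) = \soc(R)$. The short exact sequence $0 \to \m \to R \to K \to 0$ and Lemma~\ref{aditividad}(3) give $\lambda(\m) = \lambda(R) - 1$. Substituting into the assumed identity $\lambda(\m) + \lambda(\soc(R)) = \lambda(R)$, we obtain $\lambda(\soc(R)) = 1$, i.e., $\soc(R) \cong K$, which is exactly the condition that $R$ be Frobenius.

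The only nontrivial step is the identification $\Hom_R(R^n/C, R) \cong C^\perp$ via the bilinear form, which is a straightforward unpacking, so I do not expect a genuine obstacle; the key conceptual input is the exactness of $\Hom_R(-, R)$ supplied by Lemma~\ref{Lemma2.7}, which is precisely what the Frobenius hypothesis provides.
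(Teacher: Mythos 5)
Your proof is correct and follows essentially the same route as the paper: both directions hinge on the exactness of $\Hom_R(-,R)$ from Lemma~\ref{Lemma2.7}, the identification of duals via the bilinear form (yielding the exact sequence $0\to C^\perp\to R^n\to C^\vee\to 0$, which the paper obtains by exhibiting the surjection $R^n\to\Hom_R(C,R)$ with kernel $C^\perp$), the length-preserving property of Matlis duality, and the same test code $C=\m$ for the converse.
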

\begin{proof}
We first assume that $R$ is a Frobenius ring.
Consider the exact sequence
\[
\xymatrix{
0\ar[r] & C^{\perp} \ar[r] & R^{n}\ar[r]  & R^{n}/C^{\perp}\ar[r] & 0.
}
\]
By Lemma~\ref{aditividad}~(3), we have that 
$\lambda(R^{n})=\lambda(C^{\perp})+\lambda(R^{n}/C^{\perp})$. It is enough to prove that $\lambda(R^{n}/C^{\perp})=\lambda(C)$. First, we show that $R^{n}/C^{\perp}\cong \Hom_{R}(C,E)$. 
Let $\phi:R^{n}\rightarrow \Hom_{R}(C,R)$, given by $x\mapsto \phi_{x}(y)=x\cdot y$. By Lemma \ref{Lemma2.7}, since $R$ is an injective $R$-module, the functor $\Hom_{R}(-,R)$ is exact. Hence, for the inclusion $C \hookrightarrow R^{n}$, the induced map 
$\Hom_R(R^{n},R) \to \Hom_R(C,R)$ is surjective. This shows that every homomorphism from $C$ to $R$ can be realized as $\phi_x$ for some $x \in R^{n}$, 
and therefore $\phi$ is surjective.
In addition, $\ker \phi=\{x\in R^{n}\mid \phi_{x}(y)=0 \text{ for all } y\in C \}=C^{\perp}$. 
Therefore, $R^{n}/C^{\perp}\cong \Hom_{R}(C,R)$, and by Lemma~\ref{lengthMatlis}, $\lambda(R^{n}/C^{\perp})=\lambda(C^{\vee})=\lambda(C)$.

We now assume that $\lambda(C)+\lambda(C^{\perp})=n \lambda(R)$ for every $R$-code $C\subseteq R^n$. In particular, 
$$\lambda(\m)+\lambda(\m^{\perp})= \lambda(R).$$
Since $\m^{\perp}=\soc(R)$, we have that
$$
\type(R)=\dim_K(\soc(R))=\lambda(\soc(R))= \lambda(R) - \lambda(\m)=1.
$$
Hence, $R$ is Frobenius.
\end{proof}

We now introduce the concept of free code and free rank of a code. We then present a bound on the free rank of an $R$-code and give characterizations to determine if a code is free.
\begin{definition}
We say that $C$ is a free $R$-code if it is a free $R$-module. The free rank of $C$, detonated by $\fr(C)$, is defined as the maximum rank of a free summand of $C$. That is, $C\cong R^{\fr(C)}\oplus N$, where $N$ has no free direct summand. 
\end{definition}

Since $R$ is local, every projective finitely generated $R$-module is free. Then,
$$
\fr(C)=\max\{\theta\; |\; \exists  R-\hbox{linear surjection } C\twoheadrightarrow R^\theta\},
$$

\begin{theorem}\label{ThmTypeFreeRank}
$R$ is a Frobenius ring if and only if 
$$
\type(C)+\fr(C^\perp)=n
$$
for every code $R$-code $C\subseteq R^{n}$.
\end{theorem}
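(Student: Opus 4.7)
The strategy for the $(\Rightarrow)$ direction is to establish, under the Frobenius hypothesis, the identity $\fr(M) = \dim_K \rho(M)$ for every code $M \subseteq R^n$, where $\rho \colon R^n \to K^n$ denotes reduction modulo $\m$. Applied with $M = C^\perp$, this combined with Lemma~\ref{LemmaTypeFreeRank} gives
\[
\fr(C^\perp) = \dim_K \rho(C^\perp) = n - \type(C).
\]
For $(\Leftarrow)$ I would specialize the hypothesis to a single judicious code to force $\type(R) = 1$.

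The key intermediate tool, which I would prove as a standalone lemma valid for any local Artinian ring, is a Nakayama-type statement: vectors $v_1, \ldots, v_k \in R^n$ generate a free $R$-submodule of rank $k$ if and only if the reductions $\bar v_1, \ldots, \bar v_k \in K^n$ are $K$-linearly independent. The ``only if'' direction follows from a determinantal change of basis: a $K$-dependence permits replacing some basis vector by an element $w \in \m R^n$ that still generates a free rank-one summand $Rw$, but then $\Ann(w) = 0$ together with $\m^{s-1} w \subseteq \m^s R^n = 0$ forces $\m^{s-1} = 0$ (with $s$ the nilpotency index of $\m$), a contradiction. The ``if'' direction uses the socle filtration: a nonzero element of $\ker A$, where $A$ is the matrix with columns $v_i$, can be multiplied by a suitable $m \in \m^t$ to land nontrivially in $\soc(R^k) \cap \ker A$; expanding the result in a $K$-basis $z_1, \ldots, z_\tau$ of $\soc(R)$ and using $\m z_j = 0$ yields $\bar A \bar{\tilde u}_j = 0$ for each $j$, contradicting injectivity of $\bar A$.

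Given this lemma, the inequality $\fr(M) \leq \dim_K \rho(M)$ holds unconditionally, since a basis of any free summand reduces to a $K$-linearly independent set. For the reverse inequality, lift a basis of $\rho(M)$ to elements $v_1, \ldots, v_d \in M$; the lemma produces a free submodule $F \cong R^d \subseteq M$. Under the Frobenius hypothesis, Lemma~\ref{Lemma2.7} gives $R \cong E_R$, so $R$ and hence $R^d$ are injective, and the inclusion $F \hookrightarrow M$ splits, making $F$ a direct summand of $M$. Therefore $\fr(M) \geq d$, completing the forward direction.

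For $(\Leftarrow)$, I would take $n = 1$ and $C = \soc(R) \subseteq R$. Then $C^\perp = \Ann(\soc(R)) = \m$, $\type(C) = \type(R)$, and $\fr(\m) = 0$ whenever $\m \neq 0$, since a rank-one free summand of $\m$ would give $\lambda(\m) \geq \lambda(R)$, contradicting $\lambda(\m) = \lambda(R) - 1$ (Lemma~\ref{aditividad}). The hypothesis $\type(C) + \fr(C^\perp) = 1$ then forces $\type(R) = 1$, so $R$ is Frobenius; the field case is immediate. The main obstacle lies in the direct-summand step of $(\Rightarrow)$: producing the free submodule $F$ of rank $\dim_K \rho(M)$ is unconditional, but \emph{splitting $F$ off} from $M$ requires the self-injectivity of $R$, and this is precisely where the Frobenius hypothesis is indispensable.
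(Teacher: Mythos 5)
Your proposal is correct and takes essentially the same route as the paper: the forward direction combines Lemma~\ref{LemmaTypeFreeRank} with the identity $\fr(C^{\perp})=\dim_K\rho(C^{\perp})$ (which the paper records as Lemma~\ref{freerank}), and the converse specializes to the socle code inside $R^{1}$, just as the paper does. The only difference is cosmetic: you re-derive the free-rank identity from scratch via your Nakayama-type lemma together with the splitting provided by self-injectivity, and you make explicit why $\fr(\m)=0$, which the paper leaves implicit.
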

\begin{proof}
We first assume that $R$ is a Frobenius ring.
We have that
\[
\dim_{K}(\rho(C^{\perp_R})^{\perp_K})=n-\dim_{K}(\rho(C^{\perp})).
\]
Observe that $\fr(C^{\perp_R})=\dim_{K}(\rho(C^{\perp_R}))$.
We also have 
\[\dim_{K}(\soc(C))=\dim_{K}(\alpha(\rho(C^\perp)^{\perp}))
= n-\dim_{K}(\rho(C^{\perp}))
= n-\fr(C^{\perp})\]
because $\dim_{K}(\soc(C))=\dim_{K}(\rho(C^{\perp_R})^{\perp_K})$ by Lemma~\ref{LemmaTypeFreeRank}.

We now assume that
$$\type(C)+\fr(C^\perp)=n$$
for every code $C\subseteq R^{n}$. If we take $C=\soc(\m)$, then $C^\perp=\m$ and $\fr(C^\perp) = 0$. Thus,
$$\type(R)=\type(\soc(\m))=1-\fr(C^\perp)=1.$$
Hence, $R$ is a Frobenius ring.
\end{proof}

As a consequence of Theorem~\ref{ThmTypeFreeRank}, we recover a result that relates the number of generators and the free rank of a code over a chain ring.
\begin{corollary}[{\cite[Theorem 3.7(1)]{Samei_Mahmoud}}]
Let $R$ be a chain ring and $C\subseteq R^{n}$ an $R$-code. Then,
$$
\mu(C)+\fr(C^\perp)=n.
$$
\end{corollary}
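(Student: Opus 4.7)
The plan is to derive the corollary directly from Theorem~\ref{ThmTypeFreeRank} by showing two things: first, that every chain ring is Frobenius, so the theorem applies; and second, that for any finitely generated module $C$ over a chain ring we have the equality $\mu(C)=\type(C)$. Once these are in place the statement $\mu(C)+\fr(C^{\perp})=n$ is immediate.

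For the first point, recall that in a chain ring the ideals are linearly ordered, so in particular there is a unique minimal nonzero ideal, which coincides with $\soc(R)$ and is one-dimensional over $K$; hence $R$ is Frobenius. For the second point, I would invoke the structure theorem for finitely generated modules over a (discrete) chain ring: writing $\pi$ for a generator of $\m$, every such $C$ decomposes as
\[
C\cong \bigoplus_{i=1}^{k} R/(\pi^{a_i})
\]
for some positive integers $a_1,\dots,a_k$. Then $\mu(C)=k$ since the quotients $R/(\pi^{a_i})$ are cyclic and the decomposition is minimal. On the socle side, each summand $R/(\pi^{a_i})$ is itself a chain ring, hence Frobenius, so $\soc(R/(\pi^{a_i}))\cong K$; summing gives $\soc(C)\cong K^{k}$ and therefore $\type(C)=k=\mu(C)$.

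With $\mu(C)=\type(C)$ in hand, applying Theorem~\ref{ThmTypeFreeRank} to the Frobenius ring $R$ yields
\[
\mu(C)+\fr(C^{\perp})=\type(C)+\fr(C^{\perp})=n,
\]
which is the desired identity. The main conceptual step is the reduction $\mu(C)=\type(C)$, but this is a standard consequence of the classification of finitely generated modules over chain rings; everything else is invoking the theorem proved just above.
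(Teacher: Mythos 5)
Your proposal is correct and follows essentially the same route as the paper: both use the structure theorem for finitely generated modules over an Artinian chain ring to write $C\cong\bigoplus_{i=1}^{k}R/(\pi^{a_i})$, deduce $\mu(C)=k=\type(C)$, and then invoke Theorem~\ref{ThmTypeFreeRank}. Your extra checks (that a chain ring is Frobenius, and that each cyclic summand has one-dimensional socle) are details the paper leaves implicit, so nothing further is needed.
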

\begin{proof}
In this case, $\m$ is principal, and we can pick a generator $x$.
Then, for every finitely generated module $C$, there exists a sequence of positive integers $a_1,\ldots,a_k$ such that
$$
C\cong \frac{R}{(x^{a_1})}\oplus\cdots\oplus \frac{R}{(x^{a_k})}.
$$
Thus, $\mu(C)=k=\type(R).$ Then, the result follows from Theorem~\ref{ThmTypeFreeRank}.
\end{proof}

We can relate the free rank to the ability of a code to represent information independently and without undue redundancy. As $R$ is Frobenius, and therefore injective, we have an analogue characterization for injections in terms of the projection $\pi$.

\begin{lemma}\label{freerank}
We have that
$$\fr(C)=\dim_K \pi(C)=\max\{\theta\; |\; \exists R-\hbox{linear injection } R^\theta\hookrightarrow C\},$$
where $\rho$ is the projection $\rho:R^n \to K^n$ given by identifying $K\cong R/\m$.
\end{lemma}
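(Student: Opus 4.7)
The plan is to prove the chain of equalities by establishing three inequalities, relying crucially on the fact that $R$ is Frobenius, hence self-injective by Lemma~\ref{Lemma2.7}. Set $b=\dim_K \rho(C)$ and denote by $c$ the maximum $\theta$ admitting an injection $R^\theta \hookrightarrow C$.

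First I would prove $\fr(C) = c$: since $R^\theta$ is both free (hence projective) and injective, any surjection $C\twoheadrightarrow R^\theta$ splits to yield an injection $R^\theta \hookrightarrow C$, and conversely any injection $R^\theta\hookrightarrow C$ splits to yield a surjection $C\twoheadrightarrow R^\theta$. Comparing with the defining formula $\fr(C) = \max\{\theta\mid \exists\, C\twoheadrightarrow R^\theta\}$ gives the claim.

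For $b \leq c$, I would pick $c_1,\dots,c_b\in C$ whose images $\rho(c_i)$ are $K$-linearly independent and show the submodule they generate is free of rank $b$. Suppose otherwise, so some nonzero relation $\sum r_i c_i = 0$ holds. The nonzero cyclic submodule $R(r_1,\dots,r_b) \subseteq R^b$ has nonzero socle (as $R^b$ is Artinian), so there exists $t \in R$ with $(tr_1,\dots, tr_b)\in \soc(R^b)\setminus\{0\}$. Fixing a generator $z$ of $\soc(R)\cong K$ (using Frobenius), I would write $tr_i = \lambda_i z$, giving $0 = t\sum r_ic_i = z\sum \lambda_i c_i = \alpha\bigl(\sum \bar\lambda_i \rho(c_i)\bigr)$, where $\alpha:K^n\to\soc(R^n)$ is the isomorphism from Lemma~\ref{LemmaTypeFreeRank}. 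Since the $\rho(c_i)$ are $K$-linearly independent, all $\bar\lambda_i=0$, so each $\lambda_i\in\m$ and hence $tr_i =\lambda_i z=0$, contradicting the choice of $t$. For $c \leq b$, I would take an injection $\phi:R^\theta\hookrightarrow C\subseteq R^n$ and let $\bar\phi:K^\theta\to K^n$ be its reduction modulo $\m$. If $\ker\bar\phi$ were nonzero, a lift $v\in R^\theta\setminus \m R^\theta$ of a nonzero kernel element would satisfy $\phi(v)\in\m R^n$; multiplying by $z$ gives $\phi(zv) = z\phi(v)\in \m z R^n = 0$ while $zv\neq 0$ (since some coordinate of $v$ is a unit, making the corresponding coordinate of $zv$ a nonzero element of $\soc(R)$), violating injectivity of $\phi$. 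Hence $\bar\phi$ is injective and $\theta=\dim_K \rho(\phi(R^\theta))\leq b$.

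The main obstacle in both of the last two arguments is the same technical move: to transfer between $R$-linear dependencies and $K$-linear dependencies one multiplies by the socle generator $z$, so that the operation $z\cdot(-)$ identifies reductions modulo $\m$ with elements of $\soc(R^n)$ via $\alpha$. This is precisely where the Frobenius hypothesis $\soc(R)\cong K$ enters essentially; without it, $\alpha$ would fail to be an isomorphism and both inequalities could break down.
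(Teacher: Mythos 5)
Your argument is correct, and it is complete: the identity $\fr(C)=c$ via splitting in both directions, the injectivity of the map $R^b\to C$, $e_i\mapsto c_i$, obtained by pushing a putative relation into the socle, and the injectivity of the reduction $\bar\phi$ of an embedding $\phi:R^\theta\hookrightarrow C$ all check out. However, your route differs from the paper's in the two substantive inequalities. The paper runs the cycle $\fr(C)\le s\le r\le \fr(C)$: for $s\le r$ it uses self-injectivity to split $\phi(R^s)$ off $R^n$, so the images $v_i=\phi(e_i)$ are part of a basis of $R^n$ and their reductions are independent; for $r\le \fr(C)$ it extends lifts $u_1,\dots,u_r$ with independent reductions to a basis of $R^n$ (a Nakayama/basis-extension argument) and builds an explicit surjection $C\twoheadrightarrow R^r$. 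You instead isolate $\fr(C)=\max\{\theta:\exists\,R^\theta\hookrightarrow C\}$ as a separate splitting statement and then prove $\dim_K\rho(C)=\max\{\theta:\exists\,R^\theta\hookrightarrow C\}$ by the socle-multiplication trick, transferring $R$-relations to $K$-relations through $z\cdot(-)$ and the map $\alpha$. Both approaches are legitimate; the paper's basis-extension steps are more elementary and, notably, its $r\le\fr(C)$ step does not use Frobenius at all, whereas your $b\le c$ step does (it needs $\soc(R)$ cyclic so that $tr_i=\lambda_i z$). Conversely, your $c\le b$ step needs only a nonzero socle element, so your decomposition makes visible that the equality $\dim_K\rho(C)=\max\{\theta:\exists\,R^\theta\hookrightarrow C\}$ holds over any Artinian local ring, with Frobenius entering only to identify these numbers with $\fr(C)$. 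For that reason your closing remark is slightly overstated: the inequality $\dim_K\rho(C)\le\max\{\theta:\exists\,R^\theta\hookrightarrow C\}$ does not in fact require $\alpha$ to be an isomorphism, since one can argue as the paper does that elements of $C$ with $K$-independent reductions are part of a basis of $R^n$ and hence generate a free submodule.
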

\begin{proof}
Let $\ell=\fr(C)$, $r=\dim_K \pi(C),$
and $s=\max\{\theta\; |\; \exists R-\hbox{linear injection } R^\theta\hookrightarrow C\}.$
By definition, we note that $\ell\leq s$.
Fix an injection $\phi:R^s\hookrightarrow C$.
Then, we have that $C=R^s\oplus N$ for some $R$-submodule $N\subseteq C$.
Let $v_i=\phi(e_i)$, where $\{e_1,\ldots, e_s\}$ is the canonical basis of $R^s$.  Since $R^s$ is an injective $R$-module, we have that  $\{v_1,\ldots, v_s\}$ is part of a basis for $R^n$. This means that $\rho(v_1),\ldots,\rho(v_s)$ are linearly independent in $K^n$. Hence, $s\leq r$.

Let $u_1,\ldots,u_r\in C$ be such that $\pi(u_1),\ldots,\pi(u_r)$ are linearly independent in $K^n=R^n/\m R^n$. Then,  $u_1,\ldots,u_r$ form part of a basis of $R^n$. Choose $a_1,\ldots,a_{n-r}\in R^n$ such that $u_1,\ldots,u_r, a_1,\ldots,a_{n-r}$ is a basis for $R^n$. Now, there exists a map $\varphi:R^n\to R^r$ defined by $u_i\mapsto e_i$ and $a_i\mapsto 0$. Composing the inclusion with  $\varphi$, we obtain a surjection $C\to R^r$, and so, $r\leq \ell$.
By combining the inequalities, we get $r=\ell=s.$
\end{proof}

\begin{theorem}\label{Lemma_Free_iff_Free}
If $R$ is Frobenius, $C$ is a free $R$-code if and only if $C^{\perp}$ is a free $R$-code.
\end{theorem}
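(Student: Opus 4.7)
The plan is to reduce the biconditional to a clean numerical criterion for freeness and then invoke Theorem~\ref{ThmTypeFreeRank} symmetrically on $C$ and $C^\perp$.

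First, I would establish the following characterization: a finitely generated $R$-module $M$ is free if and only if $\type(M)=\fr(M)$. Decompose $M=F\oplus N$, where $F\cong R^{\fr(M)}$ is the maximal free summand and $N$ has no nonzero free direct summand; such a decomposition exists because $R$ is a local Artinian ring, where finitely generated modules satisfy a Krull--Schmidt-type decomposition. Since $R$ is Frobenius, $\soc(R)\cong K$, so $\soc(F)\cong K^{\fr(M)}$ and therefore $\type(M)=\fr(M)+\type(N)$. Because every nonzero finitely generated module over an Artinian ring has nonzero socle, the equality $\type(M)=\fr(M)$ forces $\type(N)=0$, hence $N=0$ and $M$ is free. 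The converse is immediate from $\soc(R^{r})\cong K^{r}$.

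Next, I would apply Theorem~\ref{ThmTypeFreeRank} once to $C$ and once to $C^\perp$, using the double-orthogonality identity $(C^\perp)^\perp=C$ valid over Frobenius rings (which follows from the length identity of Lemma~\ref{Lemma_Dimension_analogous} combined with the trivial inclusion $C\subseteq (C^\perp)^\perp$). This yields the two equations
\[
\type(C)+\fr(C^\perp)=n \qquad \text{and} \qquad \type(C^\perp)+\fr(C)=n.
\]
Subtracting them gives $\type(C)-\fr(C)=\type(C^\perp)-\fr(C^\perp)$.

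Combining the two ingredients, $C$ is free if and only if $\type(C)-\fr(C)=0$, which by the displayed identity is equivalent to $\type(C^\perp)-\fr(C^\perp)=0$, that is, $C^\perp$ free. The main obstacle is the freeness criterion in the first step: its content is mild, but it depends on the Krull--Schmidt decomposition for finitely generated modules over a local Artinian ring, which I would cite from a standard reference (e.g.\ \cite{BrunsHerzog}) rather than reprove.
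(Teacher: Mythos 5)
Your proof is correct, and it takes a genuinely different route from the paper. The paper argues homologically: from freeness of $C$ and self-injectivity of $R$ it splits $R^n\cong C\oplus R^n/C$, realizes $C^\perp$ as the kernel of the transposed generator matrix, and applies Matlis duality to the resulting exact sequence to identify $(C^\perp)^\vee\cong R^n/C$, which is free; the converse is then read off from $(C^\perp)^\perp=C$. You instead reduce freeness to the numerical criterion $\type(M)=\fr(M)$ (your argument for it is sound, and is in fact the same computation the paper uses later for the equivalence (1)$\Leftrightarrow$(3) of Proposition~\ref{PropFreeAlphaPi}, so there is no circularity: neither that proposition nor Theorem~\ref{ThmTypeFreeRank} relies on the statement being proved), and then apply Theorem~\ref{ThmTypeFreeRank} symmetrically to $C$ and to $C^\perp$, using $(C^\perp)^\perp=C$, which you correctly justify from the length identity of Lemma~\ref{Lemma_Dimension_analogous} together with the inclusion $C\subseteq(C^\perp)^\perp$ and additivity of length (the paper uses this fact too, without further comment). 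Your route buys a cleaner, purely numerical argument once Theorem~\ref{ThmTypeFreeRank} is available, and as a byproduct it gives the slightly stronger quantitative statement $\type(C)-\fr(C)=\type(C^\perp)-\fr(C^\perp)$, i.e.\ the ``defect from freeness'' is preserved under taking orthogonals; the paper's route, in exchange, produces structural information, namely the isomorphism $(C^\perp)^\vee\cong R^n/C$, which is of independent use. The only mild dependency in your argument is the Krull--Schmidt-type existence of the maximal free summand decomposition, but this is already built into the paper's definition of $\fr(C)$, so citing a standard reference there is entirely adequate.
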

\begin{proof}
Suppose that $C$ is free. Since $R$ is a Frobenius ring, it is an injective $R$-module over itself. Then, $R^n$ is isomorphic as an $R$-module to  $C\oplus R^n/C$.
Hence, $R^n/C$ is a free $R$-module. 
Let  $v_1,\ldots,v_\ell$ be a basis for $C$ and  let $A$ be the matrix whose columns are $v_1,\ldots,v_\ell$. We have $\Ker(A^T)=C^\perp$ and there is an exact sequence: 
\[
\xymatrix{
0\ar[r] & C^\perp\ar[r] & R^n\ar[r]^-{A^{T}} & R^\ell.
}
\]

After applying Matlis duality, we obtain another exact sequence: 
\[
\xymatrix{
R^\ell\ar[r]^-{A} & R^n\ar[r] & (C^\perp)^\vee\ar[r] & 0.
}
\]
Hence, $(C^\perp)^\vee\cong R^n/C$, and so $(C^\perp)^\vee$ is free.
Since $C^\perp=((C^\perp)^\vee)^\vee$, and the dual of a free module is free, we conclude that $C^\perp$ is free. The converse follows from the fact that $(C^\perp)^\perp=C$.
\end{proof}

We now characterize free $R$-codes using the maps $\alpha$ and $\rho$, and also show that the orthogonal of a free $R$-code is free.
\begin{proposition}\label{PropFreeAlphaPi}
The following are equivalent
\begin{enumerate}
\item $C$ is a free $R$-code,
\item $\alpha^{-1}(\soc(C))=\rho(C)$, and
\item $\fr(C)=\type(C)$.
\end{enumerate}

\end{proposition}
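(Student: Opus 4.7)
The plan is to reduce both equivalences to a single dimension count in $K^n$. Specifically, I would show $(1)\Leftrightarrow(3)$ by a structural argument on $C$ as a finitely generated module over the Artinian local Frobenius ring $R$, and $(2)\Leftrightarrow(3)$ by proving that one inclusion between $\rho(C)$ and $\alpha^{-1}(\soc(C))$ is automatic, so that the equality is controlled purely by $K$-dimensions.

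For $(1)\Leftrightarrow(3)$, write $C=R^{\fr(C)}\oplus N$, where $N$ has no free direct summand. Taking socles, and using that $R$ is Frobenius so that $\soc(R)\cong K$, one obtains $\soc(C)\cong K^{\fr(C)}\oplus\soc(N)$, and hence
\[
\type(C)=\fr(C)+\type(N).
\]
Thus $\fr(C)=\type(C)$ is equivalent to $\type(N)=0$, i.e.\ $\soc(N)=0$. Since $N$ is finitely generated over the Artinian local ring $R$, any nonzero submodule contains a simple submodule, so $\soc(N)=0$ forces $N=0$; that is, $C=R^{\fr(C)}$ is free. Conversely, if $C$ is free of rank $k$, then $\fr(C)=k$ and $\soc(C)\cong K^k$, giving $\type(C)=k=\fr(C)$.

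For $(2)\Leftrightarrow(3)$, I would first observe the general inclusion $\rho(C)\subseteq\alpha^{-1}(\soc(C))$. Indeed, for any $v\in C$ and $w\in C^{\perp}$, we have $v\cdot w=0$ in $R$, so $\rho(v)\cdot\rho(w)=0$ in $K$; this shows $\rho(C)\subseteq\rho(C^{\perp})^{\perp_K}$. Taking $K$-orthogonals of the identity in Lemma~\ref{LemmaTypeFreeRank} yields $\alpha^{-1}(\soc(C))=\rho(C^{\perp})^{\perp_K}$, so the inclusion $\rho(C)\subseteq\alpha^{-1}(\soc(C))$ holds. Consequently, equality in (2) is equivalent to the equality of $K$-dimensions. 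By Lemma~\ref{freerank}, $\dim_K\rho(C)=\fr(C)$, while $\dim_K\alpha^{-1}(\soc(C))=\dim_K\soc(C)=\type(C)$. Therefore (2) holds if and only if $\fr(C)=\type(C)$, which is (3).

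The main obstacle is not technical but conceptual: recognizing that the nontrivial-looking set equality in (2) automatically holds as an inclusion, so the whole content of the proposition reduces to the dimensional identity (3). Once this is in place, the argument is purely organizational, with the Frobenius hypothesis entering only through $\soc(R)\cong K$ (used both to compute $\type(C)$ for a free module and, via Lemma~\ref{LemmaTypeFreeRank}, to identify $\alpha^{-1}(\soc(C))$ with $\rho(C^{\perp})^{\perp_K}$).
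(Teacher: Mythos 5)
Your proposal is correct, and it follows a partly different route from the paper. For $(1)\Leftrightarrow(3)$ you argue exactly as the paper does in its $(3)\Rightarrow(1)$ step (decompose $C\cong R^{\fr(C)}\oplus N$, compute $\type(C)=\fr(C)+\type(N)$ using $\soc(R)\cong K$, and use that a nonzero finite-length module has nonzero socle), so there the two proofs coincide. The genuine difference is how $(2)$ is handled: the paper proves $(1)\Rightarrow(2)$ structurally -- a free code over the Frobenius ring $R$ is injective, hence a direct summand of $R^n$, so a basis of $C$ extends to a basis of $R^n$, giving the explicit socle basis $ze_1,\dots,ze_t$ and hence $\alpha^{-1}(\soc(C))=\rho(C)$ -- and then gets $(2)\Rightarrow(3)$ by the same dimension count you use. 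You instead isolate the observation that $\rho(C)\subseteq\alpha^{-1}(\soc(C))$ holds for every code, obtained from Lemma~\ref{LemmaTypeFreeRank} by taking double $K$-orthogonals (legitimate, since the standard form on $K^n$ is nondegenerate) together with $\rho(C)\subseteq\rho(C^{\perp})^{\perp_K}$; then $(2)$ is the equality case of this inclusion and is detected by dimensions via Lemma~\ref{freerank} ($\dim_K\rho(C)=\fr(C)$) and $\dim_K\alpha^{-1}(\soc(C))=\type(C)$. This buys a cleaner logical structure, proving $(2)\Leftrightarrow(3)$ without passing through freeness or the basis-extension argument, while the paper's route buys an explicit description of $\soc(C)$ for free codes. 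Two small remarks: your inclusion admits an even shorter proof -- for $v\in C$ one has $\alpha(\rho(v))=zv\in C$ and $\m\, zv=0$, so $zv\in\soc(C)$ -- which avoids invoking Lemma~\ref{LemmaTypeFreeRank} at all; and your closing claim that Frobenius enters only through $\soc(R)\cong K$ is slightly misleading, since the proof of Lemma~\ref{LemmaTypeFreeRank} also uses $(C^{\perp})^{\perp}=C$, i.e.\ self-injectivity, though citing that lemma as stated keeps your argument intact.
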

\begin{proof}
Suppose that $C$ is free. Since $C$ is an injective $R$-module, we have $R^n\cong C\oplus N$ for some $R$-submodule $N\subseteq R^n$. Then, every basis $\{e_1,\ldots e_t\}$ for $C$ can be extended to a basis $\{e_1,\ldots e_t, w_1,\ldots, w_{n-t}\}$ of $R^n$. Thus, $\soc(C)$ has $ze_1,\ldots,ze_t$ as a basis over $K$. Consequently, $\alpha^{-1}(\soc(C))=\rho(C)$.

Now assume that $\alpha^{-1}(\soc(C))=\rho(C)\cong \soc(C)$. By Lemma \ref{freerank}, $\fr(C)=\type(C)$. 

Finally, consider that $\fr(C)=\type(C)$. Let $t=\fr(C)=\type(C)$.
We have $C\cong R^{t}\oplus N$. Since 
$$t=\dim_K \soc(C)=\dim_K \soc(R^t)+\dim_K \soc(N)=t+\type(N),$$
it follows that $\type(N)=0$, and hence, $N=0$. Therefore, $C$ is free.
\end{proof}

The intersection between a code and its orthogonal, called the hull, is crucial for designing codes that can detect and correct errors efficiently, optimize information protection, and develop secure communication and cryptographic systems~\cite{Leon1982, Sendrier2000}. Recently, the hull has become essential in the development of quantum codes~\cite{Sarah2024}. We present some results for the hull of a code over a Frobenius ring.

\begin{definition}
The hull of an $R$-code $C$ is defined by $\hull(C)=C\cap C^{\perp}$. We say that a code is linear complementary dual, or LCD, if $\hull(C)=0$.
\end{definition}

\begin{lemma}\label{LemmaTypebound}
We have that $\lambda(C) \leq \type(C)\cdot \lambda(R).$ Even more,
$\lambda(C)=\type(C)\cdot \lambda(R)$ if and only if $R$ is free.
\end{lemma}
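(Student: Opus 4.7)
The plan is to obtain the inequality via Matlis duality applied to $C^{\vee}$, and then to analyze the equality case through the same lens. Note first that since the statement sits in Section~\ref{Sect Frob}, the ring $R$ is Frobenius, and I will read the equality clause as ``$C$ is free'' (otherwise the case $n=0$ already breaks it).

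For the inequality, I would start from Lemma~\ref{lemmaDualitySoc}, which says $\mu(C^{\vee})=\type(C)$. Hence there is an $R$-linear surjection $R^{\type(C)}\twoheadrightarrow C^{\vee}$. Since length is additive on short exact sequences by Lemma~\ref{aditividad}(3), a surjection can only decrease length, so
$$\lambda(C^{\vee})\leq \type(C)\cdot \lambda(R).$$
Combining this with Lemma~\ref{lengthMatlis}, which gives $\lambda(C)=\lambda(C^{\vee})$, yields the desired bound
$$\lambda(C)\leq \type(C)\cdot \lambda(R).$$

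For the equality case, one direction is a direct computation: if $C\cong R^{t}$ is free, then $\lambda(C)=t\cdot\lambda(R)$ by Lemma~\ref{aditividad}(3). To compute $\type(C)$, I use that $R$ is Frobenius so $\soc(R)\cong K$, hence $\type(R)=1$, and therefore $\type(R^{t})=t$. Thus $\lambda(C)=\type(C)\cdot\lambda(R)$. For the converse, suppose equality holds. Then the surjection $R^{\type(C)}\twoheadrightarrow C^{\vee}$ above has source and target of the same finite length, so by additivity of length it must be an isomorphism. Therefore $C^{\vee}$ is free, and applying Matlis duality again, $C\cong (R^{\type(C)})^{\vee}\cong E_{R}^{\type(C)}$. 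Now invoke Lemma~\ref{Lemma2.7}, which identifies $E_{R}\cong R$ when $R$ is Frobenius, to conclude $C\cong R^{\type(C)}$ is free.

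I do not anticipate any serious obstacle: all of the needed ingredients (the identification $\type(C)=\mu(C^{\vee})$, the invariance $\lambda(C)=\lambda(C^{\vee})$, and the characterization $R\cong E_{R}$ of the Frobenius hypothesis) have already been established earlier in the paper, so the proof is essentially an assembly of Matlis-duality facts. The only small care point is to argue that a surjection between two modules of equal finite length is automatically an isomorphism, which follows immediately from Lemma~\ref{aditividad}(3) applied to the short exact sequence of the kernel.
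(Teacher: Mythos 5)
Your proof is correct, and it is in essence the Matlis-dual formulation of the paper's own argument. The paper proves the inequality by asserting an $R$-linear injection $C\hookrightarrow R^{\type(C)}$ (the injective hull of $C$ is $E_R^{\type(C)}$, and $E_R\cong R$ because $R$ is Frobenius), and then observes that equality holds exactly when this injection is an isomorphism, i.e.\ when $C$ is free. You instead construct the dual surjection $R^{\type(C)}\twoheadrightarrow C^{\vee}$ from $\mu(C^{\vee})=\type(C)$ (Lemma~\ref{lemmaDualitySoc}), transfer lengths across duality via Lemma~\ref{lengthMatlis}, and in the equality case conclude that the surjection is an isomorphism and dualize back using $R\cong E_R$ (Lemma~\ref{Lemma2.7}); applying $(-)^{\vee}$ to your surjection recovers precisely the paper's injection. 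What your route buys is explicitness: every step is reduced to a lemma already established in the paper, whereas the paper's one-line existence of the embedding $C\hookrightarrow R^{\type(C)}$ silently invokes the structure of injective hulls over Artinian rings; the paper's version is correspondingly shorter. You are also right that the equality clause should read ``$C$ is free'' rather than ``$R$ is free'' (the paper's own proof confirms this reading), and your verification of both directions of the equivalence, including $\type(R^{t})=t$ from $\type(R)=1$ and the fact that a length-preserving surjection is an isomorphism via Lemma~\ref{aditividad}(3), is sound.
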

\begin{proof}
As $R$ is Frobenius, there exists an $R$-linear injection $\phi: C\hookrightarrow R^{\type(C)}$. Then, $\lambda(C)\leq \type(C)\cdot \lambda(R)$. We also have that  $\phi$ is an isomorphism, which is equivalent to $C$ being free, if and only if $\lambda(C)=\type(C)\cdot \lambda(R)$.
\end{proof}

We are now ready to show that LCD codes in Frobenius rings are free. This was previously proven for finite rings~\cite[Theorem 2]{LCD}. Our strategy is similar, with the main difference that we use the length instead of the cardinality. 
\begin{theorem}\label{ThmLCDfree}
If $R$ is Frobenius and $C$ is an LCD $R$-code, then $C$ is a free $R$-code.
\end{theorem}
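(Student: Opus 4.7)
The plan is to show directly that $R^n$ splits as $C \oplus C^\perp$, from which freeness follows for structural reasons over local rings.

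First I would use the hypothesis $\hull(C) = C \cap C^\perp = 0$ together with the standard short exact sequence
\[
0 \longrightarrow C \cap C^\perp \longrightarrow C \oplus C^\perp \longrightarrow C + C^\perp \longrightarrow 0,
\]
and the additivity of length from Lemma \ref{aditividad}(3), to get $\lambda(C + C^\perp) = \lambda(C) + \lambda(C^\perp)$. Since $R$ is Frobenius, Lemma \ref{Lemma_Dimension_analogous} gives $\lambda(C) + \lambda(C^\perp) = n \lambda(R) = \lambda(R^n)$. Because $C + C^\perp$ is a submodule of $R^n$ of the same finite length, Lemma \ref{aditividad}(3) applied to the inclusion forces $C + C^\perp = R^n$. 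Combined with $C \cap C^\perp = 0$, this yields the internal direct sum decomposition $R^n = C \oplus C^\perp$.

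Next I would conclude freeness from this splitting. The free module $R^n$ decomposes as $C \oplus C^\perp$, so $C$ is a direct summand of a free module and therefore a finitely generated projective $R$-module. Since $(R,\m,K)$ is local, every finitely generated projective $R$-module is free (Nakayama plus lifting of a basis of $C/\m C$), so $C$ is free as claimed.

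There is essentially no obstacle here once Lemma \ref{Lemma_Dimension_analogous} is in hand; the only subtlety is making sure that the length formula $\lambda(C) + \lambda(C^\perp) = n\lambda(R)$ is used correctly to upgrade the equality of lengths to the equality of submodules $C + C^\perp = R^n$, which is where finiteness of length (ensured by $R$ being Artinian and $C, C^\perp$ finitely generated) is essential. The argument also makes transparent the additional information that $C^\perp$ is itself free and is a direct complement to $C$ in $R^n$, which is consistent with Theorem \ref{Lemma_Free_iff_Free}.
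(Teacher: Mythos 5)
Your proof is correct, and it takes a genuinely different route from the paper's. The paper argues by contradiction: assuming $C$ is not free, it invokes Theorem~\ref{Lemma_Free_iff_Free} to conclude $C^\perp$ is not free either, then uses the strict form of Lemma~\ref{LemmaTypebound} ($\lambda(C)<\type(C)\lambda(R)$ for non-free $C$) together with $\type(C)+\type(C^\perp)=\type(C\oplus C^\perp)$ to get $\lambda(C)+\lambda(C^\perp)<n\lambda(R)$, contradicting Lemma~\ref{Lemma_Dimension_analogous}. You instead argue directly: from $\hull(C)=0$, additivity of length (Lemma~\ref{aditividad}(3)) and Lemma~\ref{Lemma_Dimension_analogous} give $\lambda(C+C^\perp)=\lambda(C)+\lambda(C^\perp)=n\lambda(R)=\lambda(R^n)$, and since a submodule of $R^n$ of full finite length must be all of $R^n$ (the quotient has length zero), you obtain the internal splitting $R^n=C\oplus C^\perp$; freeness then follows because a finitely generated projective module over the local ring $R$ is free, a fact the paper itself records just before Theorem~\ref{ThmTypeFreeRank}. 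Your approach is more economical -- it bypasses both Lemma~\ref{LemmaTypebound} and Theorem~\ref{Lemma_Free_iff_Free} -- and it yields strictly more information, namely that $C^\perp$ is a free direct complement of $C$ in $R^n$, whereas the paper's contradiction argument only certifies freeness of $C$. The paper's route, on the other hand, showcases the interplay between type and length that is used elsewhere in the section. Both proofs use the Frobenius hypothesis only through the length identity $\lambda(C)+\lambda(C^\perp)=n\lambda(R)$.
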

\begin{proof}
We proceed by contradiction. Assume that $C$ is not free. By Theorem~\ref{Lemma_Free_iff_Free}, $C^\perp$ is also not free. 
The short exact sequence
\begin{equation}\label{EqSES}
0\to \hull(C)\to C\oplus C^\perp \to  C+ C^\perp\to 0
\end{equation}
implies that $C\oplus C^\perp \cong  C+ C^\perp$. Therefore, $\type(C)+\type(C^\perp)=\type(C+ C^\perp)$. Thus,
\begin{align*}
\lambda(C+C^\perp)= \lambda(C)+\lambda(C^\perp)& <\type(C)\cdot \lambda(R)+\type(C^\perp)\cdot\lambda(R) \quad \text{(by Lemma~\ref{LemmaTypebound})}\\
&=(\type(C)+\type(C^\perp))\cdot\lambda(R)\\
&=n\cdot\lambda(R).
\end{align*}
We also have that
$$
\lambda(C+C^\perp)= \lambda(C)+\lambda(C^\perp)=\lambda(R^n)=n\cdot\lambda(R) \quad \text{(by Lemma~\ref{Lemma_Dimension_analogous})},
$$
which leads to a contradiction. Thus, $C$ must be free.
\end{proof}

\section{Codes over Artinian local rings}\label{SecCodes}
In this section, $C$ denotes an $R$-code, where $(R,\m, K)$ is an Artinian local ring. For $\cA\subseteq [n]$, we set $C(\cA)=\{ v\in C\; |\; \supp(v)\subseteq \cA\}$ and $H_\cA=R^n(\cA)$. Observe that $H_\cA$ is the $R$-code generated by $\{e_i\; | \; i\in\cA\}$, where $\{e_1,\ldots, e_n\}$ is the canonical basis of $R^n$. 

\begin{remark}\label{RemDistMap}
If the cardinality of $\cA$ is $\hd(C)-1$, we have that $ R^n/H_\cA\cong R^{n+1-\hd(C)}$. In addition, as $H_\cA\cap C=0$, the restriction map $\phi:C\to  R^n/H_\cA$ is injective.
\end{remark}

If $R$ is finite and $C\subseteq R^n$, the Singleton bound~\cite{Singleton} states $|C|\leq |R|^{n+1-\hd(C)}$. By Remark~\ref{RemLenCardinality}, the Singleton bound is equivalent to $\lambda(C) \leq \lambda(R) \left(n + 1 - \hd(C) \right)$. We now show that the last bound still holds even for infinite rings.

\begin{theorem}\label{ThmSing}
If $R$ is a local Artinian ring, we have that
\[
\hd(C)\leq n-\frac{\lambda(C)}{\lambda(R)}+1
\qquad
\text{ and }
\qquad
\hd(C)\leq n-\frac{\type(C)}{\type(R)}+1.
\]
\end{theorem}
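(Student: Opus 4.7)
The plan is to combine Remark~\ref{RemDistMap} with the additivity (or monotonicity) of the invariants $\lambda$ and $\type$. Set $d=\hd(C)$ and pick any subset $\mathcal{A}\subseteq[n]$ with $|\mathcal{A}|=d-1$. By Remark~\ref{RemDistMap}, the restriction map
\[
\phi\colon C\longrightarrow R^n/H_{\mathcal{A}}\cong R^{\,n+1-d}
\]
is injective, since no nonzero codeword can be supported in a set of size strictly less than $d$. Thus $C$ embeds as an $R$-submodule of $R^{n+1-d}$, and the whole proof amounts to pushing the two relevant invariants across this embedding.

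For the MDS bound, I would invoke Lemma~\ref{aditividad}(3) (additivity of length in short exact sequences), applied inductively to the filtration $0\subseteq R\subseteq R^2\subseteq\cdots\subseteq R^{n+1-d}$, to conclude that $\lambda(R^{n+1-d})=(n+1-d)\lambda(R)$. Since length can only decrease when passing to a submodule (another consequence of Lemma~\ref{aditividad}(3) applied to the inclusion $C\hookrightarrow R^{n+1-d}$), we obtain $\lambda(C)\leq (n+1-d)\lambda(R)$, which rearranges to $\hd(C)\leq n-\lambda(C)/\lambda(R)+1$.

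For the MDT bound, the same setup works with Lemma~\ref{aditividad}(2) replacing part~(3). Because the socle functor commutes with direct sums, $\soc(R^{n+1-d})\cong\soc(R)^{n+1-d}$ and hence $\type(R^{n+1-d})=(n+1-d)\type(R)$. The injection $\phi$ together with Lemma~\ref{aditividad}(2) yields $\type(C)\leq\type(R^{n+1-d})=(n+1-d)\type(R)$, and rearranging gives the second inequality.

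The proof is essentially formal once Remark~\ref{RemDistMap} is in hand; the only thing to verify carefully is that both invariants of $R^{n+1-d}$ scale linearly with the rank, which follows from their behavior on direct sums. The classical Singleton argument (over a field, $\dim C\leq n+1-d$) is recovered in the case $R=K$, and this proof is really just the observation that one can replace $\dim_K$ by either $\lambda/\lambda(R)$ or $\type/\type(R)$, both of which are monotone under submodules and additive on direct sums of copies of $R$.
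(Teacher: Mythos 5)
Your proof is correct and follows essentially the same route as the paper: the paper also fixes $\mathcal{A}$ of cardinality $\hd(C)-1$, uses Remark~\ref{RemDistMap} to embed $C$ into $R^n/H_{\mathcal{A}}\cong R^{n+1-\hd(C)}$, and then bounds $\lambda(C)$ and $\type(C)$ by $(n+1-\hd(C))\lambda(R)$ and $(n+1-\hd(C))\type(R)$ via the monotonicity in Lemma~\ref{aditividad}. Your extra remarks on the linear scaling of $\lambda$ and $\type$ on free modules just make explicit what the paper leaves implicit.
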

\begin{proof}
Take $\cA\subseteq [n]$ of cardinality $\hd(C)-1$. By Remark~\ref{RemDistMap}, we have that
\begin{align*}
\lambda(C)\leq \lambda( R^n/H_\cA)=\lambda( R^{n+1-\hd(C)})&=(n+1-\hd(C))\lambda(R) \quad \text{ and }\\
\type(C)\leq \type( R^n/H_\cA)=\type( R^{n+1-\hd(C)})&=(n+1-\hd(C))\type(R).
\end{align*}
These inequalities give the desired results.
\end{proof}
\begin{remark}\label{Rmk.tl}
From Theorem~\ref{ThmSing}, it is natural to ask if there is a relation between the quotients $\lambda(C)/\lambda(R)$ and $\type(C)/\type(R)$. The answer is that one or the other can be larger, as the following explanation shows.

In a chain ring, we have that  
$$\type(C)=\frac{\type(C)}{\type(R)}\leq \frac{\lambda(C)}{\lambda(R)}.$$
However, this is not the case in other rings. Suppose that
$S=K[x,y_1,\ldots,y_t]$, $\m=(x,y_1,\ldots,y_t)$, $R=S/(\m^3,xy_j,y_i y_j\; |\; i,j=1,\ldots, t)$, $C=R x$, and $t\geq 3$. Then,
$\soc(R)=Kx^2+\ldots+Ky_t$, $\type(R)=t+1$, $\lambda(R)=t+3$, $\soc(R)=Kx^2$, $\type(C)=1$, $\lambda(C)=2$, 
$\frac{\lambda(C)}{\lambda(R)}=\frac{2}{t+3}$, and $\frac{\type(C)}{\type(R)}=\frac{1}{t+1}.$
\end{remark}

As a consequence of Theorem~\ref{ThmSing}, we recover a bound for the minimum distance of an $R$-code $C$ over a chain ring given by Samei and Mahmoudi in terms of $\mu(C)$, the minimum number of generators of $C$.
\begin{corollary}[{\cite[Theorem 3.7(2)]{Samei_Mahmoud}}]
Let $R$ be a chain ring. For an $R$-code $C\subseteq R^n$, we have that
\[
\hd(C)\leq n-\mu(C)+1.
\]
\end{corollary}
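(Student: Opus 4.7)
The plan is to deduce the inequality directly from the MDT bound $\hd(C)\leq n-\type(C)/\type(R)+1$ of Theorem~\ref{ThmSing} by showing that in the chain-ring setting the fraction $\type(C)/\type(R)$ equals $\mu(C)$. Two facts are needed for this: first, that $\type(R)=1$, and second, that $\mu(C)=\type(C)$ for every finitely generated $R$-module $C$ over a chain ring.

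The first fact is immediate: a chain ring is listed in the earlier example as a zero-dimensional Frobenius local ring, so $\soc(R)\cong K$ and hence $\type(R)=\dim_K\soc(R)=1$. For the second fact, I would reuse the structural argument already employed in the corollary following Theorem~\ref{ThmTypeFreeRank}: since $\m$ is principal, generated by some $x$, every finitely generated $R$-module $C$ decomposes as
\[
C\cong \frac{R}{(x^{a_1})}\oplus\cdots\oplus \frac{R}{(x^{a_k})}
\]
for some positive integers $a_1,\dots,a_k$. Then $\mu(C)=k$, and applying the socle functor summand by summand gives
\[
\soc(C)\cong \bigoplus_{i=1}^k \soc(R/(x^{a_i})),
\]
where each summand is isomorphic to $K$ (generated by the class of $x^{a_i-1}$). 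Hence $\type(C)=k=\mu(C)$.

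Combining these two observations with the MDT bound yields
\[
\hd(C)\leq n-\frac{\type(C)}{\type(R)}+1=n-\type(C)+1=n-\mu(C)+1,
\]
which is the desired inequality. No serious obstacle is expected, since the proof is essentially a translation of the MDT bound via the chain-ring structure theorem; the only point to be careful about is confirming that the decomposition identity $\mu(C)=\type(C)$ holds uniformly for all finitely generated $C$, which is exactly what the principality of $\m$ provides.
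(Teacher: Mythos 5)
Your proposal is correct and follows essentially the same route as the paper: the paper also deduces the bound from the MDT inequality in Theorem~\ref{ThmSing}, using that $\type(C)=\mu(C)$ (and $\type(R)=1$) over a chain ring, which is exactly the decomposition argument $C\cong R/(x^{a_1})\oplus\cdots\oplus R/(x^{a_k})$ you spell out. Your write-up merely makes explicit the socle computation that the paper leaves implicit.
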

\begin{proof}
This is a consequence of Theorem~\ref{ThmSing} because the minimum number of generators and the type of an $R$-code are equal when $R$ is a chain ring.
\end{proof}
We also recover a bound for the minimum distance of an $R$-code $C$ over a finite Frobenius ring given by Dougherty in terms of linear injections.
\begin{corollary}[{\cite[Theorem 4.12]{BookCodesRings}}]
If $R$ is a finite Frobenius ring, then
\[
\hd(C)\leq n+1-\min\{\theta\; |\; \exists R-\hbox{linear injection } C\hookrightarrow R^\theta\}.
\]
\end{corollary}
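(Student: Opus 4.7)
The plan is to derive this corollary directly from Theorem~\ref{ThmSing}, after recognizing the minimum injection quantity as the type of $C$. Since $R$ is Frobenius, Theorem~\ref{ThmSing} specializes to $\hd(C)\le n-\type(C)+1$, because $\type(R)=1$ in the Frobenius case. So the real task is to identify
\[
\type(C)=\min\{\theta\mid \exists \text{ an $R$-linear injection } C\hookrightarrow R^\theta\}.
\]

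To prove this equality I would argue by two inequalities. First, for the direction $\type(C)\le \theta$ whenever $C\hookrightarrow R^\theta$: an injection $C\hookrightarrow R^\theta$ restricts to an injection $\soc(C)\hookrightarrow \soc(R^\theta)=\soc(R)^\theta$. Since $R$ is Frobenius, $\soc(R)\cong K$, so $\soc(R^\theta)$ has $K$-dimension $\theta$, and hence $\type(C)=\dim_K\soc(C)\le\theta$. Taking the minimum over all such $\theta$ gives $\type(C)\le \min\{\theta\mid C\hookrightarrow R^\theta\}$.

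For the reverse inequality, I would exhibit an explicit injection $C\hookrightarrow R^{\type(C)}$. The standard tool is the injective hull: for a finitely generated module over an Artinian local ring, the injective hull is $E_R(K)^{\type(C)}$. Since $R$ is Frobenius, Lemma~\ref{Lemma2.7} gives $E_R\cong R$, so $C$ embeds in $R^{\type(C)}$. This shows $\min\{\theta\mid C\hookrightarrow R^\theta\}\le \type(C)$, and combined with the previous step, the minimum equals $\type(C)$.

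Finally, plugging back into the Frobenius specialization of Theorem~\ref{ThmSing} yields the desired bound. The only subtle ingredient is the identification of the injective hull, but this is a standard fact once one knows $R\cong E_R$ for Frobenius $R$, which is already recorded in Lemma~\ref{Lemma2.7}, so no genuine obstacle arises. I would not need to invoke finiteness of $R$ at any point; in fact this argument shows the corollary holds more generally for any Artinian local Frobenius ring, refining Dougherty's original statement.
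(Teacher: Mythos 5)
Your proposal is correct and follows essentially the same route as the paper: the paper deduces the corollary from Theorem~\ref{ThmSing} (with $\type(R)=1$) by citing the identity $\type(C)=\min\{\theta \mid \exists\ R\text{-linear injection } C\hookrightarrow R^\theta\}$, which you simply prove in detail via the socle bound and the injective hull $E_R(K)^{\type(C)}\cong R^{\type(C)}$. Your observation that finiteness of $R$ is never needed (only that $R$ is Artinian local Frobenius) is accurate and consistent with the paper's framework.
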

\begin{proof}
This is a consequence of Theorem~\ref{ThmSing} because $$\type(C)=\min\{\theta\; |\; \exists R-\hbox{linear injection } C\hookrightarrow R^\theta\}$$ for an $R$-code $C$ when $R$ is a finite Frobenius ring.
\end{proof}

Motivated by Theorem~\ref{ThmSing}, we introduce codes classified by the bound on their minimum distance.
\begin{definition}
The inequalities presented in Theorem~\ref{ThmSing} are called MDS (maximum distance separable) and MDT (maximum distance type) bounds, respectively (from left to right). A code achieving the MDS (MDT) bound is called an MDS (MDT) code.
\end{definition}

MDT codes over a chain ring coincide with MDR codes introduced in the previous work~\cite{MDR_MDS}.

Despite the similarity between the two inequalities in Theorem~\ref{ThmSing}, the two definitions do not coincide in general, as the following example shows.
\begin{example}\label{ex.mdtnotmdr} Let $K$ be any field and define
$R=K[x]/(x^t)$, with $t\geq 2$. Define the $R$-code $C=(x^{t-1},x^{t-1})\in R^2$. We note that $d_H(C)=2$ and $\type(C)=1$. Then, $C$ is an MDT code, but not an MDS code.
\end{example}

The previous example says that MDT does not imply MDS. The following result shows that MDS implies MDT. It is interesting to note, however, that the implication MDS $\rightarrow$ MDT means nothing about the relation between the quotient of the lengths and the quotient of the types; see Remark~\ref{Rmk.tl} for more details.

\begin{theorem}\label{ThmEquivMDS}
If $R$ is a local Artinian ring, then $C$ is MDS if and only if $C$ is a free MDT.
\end{theorem}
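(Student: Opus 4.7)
The approach is to exploit the injection $\phi\colon C\hookrightarrow R^n/H_{\cA}\cong R^{n+1-\hd(C)}$ from Remark~\ref{RemDistMap}, obtained by choosing any $\cA\subseteq[n]$ of cardinality $\hd(C)-1$. Once we have this injection, both MDS and MDT will amount to saying that a particular numerical invariant of $C$ equals that of the ambient free module $R^{n+1-\hd(C)}$, and the structural consequences will follow. The plan is to translate the two bounds into the equalities $\lambda(C)=(n+1-\hd(C))\,\lambda(R)$ and $\type(C)=(n+1-\hd(C))\,\type(R)$, and then play these off against each other via the injection $\phi$.

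For the direction MDS $\Rightarrow$ free MDT: assume $\lambda(C)=(n+1-\hd(C))\,\lambda(R)$. Since $R^{n+1-\hd(C)}$ has length $(n+1-\hd(C))\,\lambda(R)$, the injection $\phi$ goes between two modules of equal finite length. By the additivity of length in Lemma~\ref{aditividad}(3) applied to $0\to C\to R^{n+1-\hd(C)}\to \ck(\phi)\to 0$, the cokernel has length zero and thus vanishes, so $\phi$ is an isomorphism. Consequently $C\cong R^{n+1-\hd(C)}$ is free, and its type is $(n+1-\hd(C))\,\type(R)$, which is precisely the MDT equality. So $C$ is a free MDT code.

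For the direction free MDT $\Rightarrow$ MDS: if $C$ is free, write $C\cong R^{r}$ for some $r\geq 0$, so that $\lambda(C)=r\,\lambda(R)$ and $\type(C)=r\,\type(R)$. The MDT condition $\type(C)=(n+1-\hd(C))\,\type(R)$ forces $r=n+1-\hd(C)$ (note $\type(R)\neq 0$ since $R\neq 0$). Substituting into the length formula yields $\lambda(C)=(n+1-\hd(C))\,\lambda(R)$, which is the MDS equality. Hence $C$ is MDS.

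I do not expect a serious obstacle here: the whole argument rests on two small observations, namely that a length-preserving injection of finite-length modules is an isomorphism, and that for free modules the length and type invariants are both proportional to the rank. The mildest subtlety is just to record why $H_{\cA}\cap C=0$ when $|\cA|=\hd(C)-1$ (any nonzero element of the intersection would be a codeword of weight at most $\hd(C)-1$), but this is already baked into Remark~\ref{RemDistMap}.
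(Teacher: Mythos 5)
Your proposal is correct and follows essentially the same route as the paper: both use the injection $\phi\colon C\hookrightarrow R^{n+1-\hd(C)}$ from Remark~\ref{RemDistMap}, conclude from the MDS length equality that $\phi$ is an isomorphism (you just spell out the length-additivity argument the paper leaves implicit), and for the converse use that a free module has length and type proportional to its rank. No gaps to report.
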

\begin{proof}
By Remark \ref{RemDistMap}, we have an injective map $\phi: C\to R^{n-\hd(C)+1}$.

($\Rightarrow$) We first assume that $C$ is an MDS code. Then, $\lambda(C)=(n-\hd(C)+1)\lambda(R)$ and  $\phi: C\to R^{n-\hd(C)+1}$ is an isomorphism, which implies that $C$ is a free $R$-code. In addition, $\type(C)=(n-\hd(C)+1)\type(R)$, thus, $C$ is an MDT $R$-code.

($\Leftarrow$) If $C$ is a free and an MDT code, then $\type(R)\lambda(C)=\lambda(R)\type(C)$ and $\type(C)=(n-\hd(C)+1)\type(R)$. Combining the last two equalities, we obtain
$$
\frac{\lambda(C)}{\lambda(R)}=\frac{\type(C)}{\type(R)}=n-\hd(C)+1.
$$
Hence, $C$ is an MDS code.
\end{proof}

\begin{definition}\rm
Take $\mathcal{A}\subseteq[n]$, and let $\pi_{\mathcal{A}}:R^n\rightarrow R^{|\mathcal{A}|}$ be the projection that drops the coordinates outside $\mathcal{A}$. The puncturing and shortening of $C\subseteq R^n$ over $\mathcal{A}$ are defined respectively by
\[
C|_{\mathcal{A}}=\pi_{\mathcal{A}^c}(C) \qquad \text{ and } \qquad
C_{\mathcal{A}}=\pi_{\mathcal{A}^c}(C\cap{H}_\mathcal{A}^c).
\]
We refer to them as punctured and shortened codes, respectively.
\end{definition}

Puncturing and shortening are common operations over codes in coding theory that reveal structural properties of a code. For example, when $R$ is a field, the MDS property is equivalent to certain properties of the shortened and punctured code. The following proposition demonstrates that some of these properties remain valid for Artinian rings.

\begin{proposition}\label{Prop.MDSupp}
Let $C\subseteq R^n$ be a code of minimum distance $d$.
\begin{enumerate}    
\item $C$ is MDS if and only if for any $\mathcal{A}\subseteq [n]$ of cardinality $d-1$, $C|_{\mathcal{A}}=R^{n-d+1}$.

\item Assume that $R$ is Frobenius. $C$ is MDS if and only if for any $\mathcal{A}\subseteq [n]$ of cardinality $n-d$, $C_\mathcal{A}\neq 0$.
\end{enumerate}
\end{proposition}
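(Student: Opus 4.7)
My plan is to prove both parts by module-length comparisons. Part~(1) needs only the injection from Remark~\ref{RemDistMap} and the Singleton bound of Theorem~\ref{ThmSing}. Part~(2) additionally needs the Frobenius length duality of Lemma~\ref{Lemma_Dimension_analogous} together with an elementary shortening–puncturing identity $(C_\mathcal{A})^\perp = (C^\perp)|_\mathcal{A}$ inside $R^{|\mathcal{A}^c|}$, both of which are available because $R$ is Frobenius.

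For part~(1), in the forward direction I assume $C$ is MDS and fix any $\mathcal{A}$ with $|\mathcal{A}|=d-1$. By Remark~\ref{RemDistMap} the composition $\phi\colon C \hookrightarrow R^n \twoheadrightarrow R^n/H_\mathcal{A} \cong R^{n-d+1}$ is injective, and the MDS equality gives $\lambda(C) = (n-d+1)\lambda(R) = \lambda(R^{n-d+1})$. An injection of $R$-modules of equal finite length must be an isomorphism, so $C|_\mathcal{A} = \phi(C) = R^{n-d+1}$. For the reverse, if $C|_\mathcal{A} = R^{n-d+1}$ for even one $\mathcal{A}$ of size $d-1$, then $C|_\mathcal{A}$ is a quotient of $C$, so $\lambda(C) \geq (n-d+1)\lambda(R)$; combining with Theorem~\ref{ThmSing} forces the MDS equality.

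For part~(2), the forward direction is again a length count: MDS yields $\lambda(C) = (n-d+1)\lambda(R) > (n-d)\lambda(R) = \lambda(R^{|\mathcal{A}|})$, so the projection $\pi_\mathcal{A}\colon C \to R^{|\mathcal{A}|}$ cannot be injective; its kernel equals $C \cap H_{\mathcal{A}^c}$, so $C_\mathcal{A} \neq 0$. For the backward direction I first establish $(C_\mathcal{A})^\perp = (C^\perp)|_\mathcal{A}$ inside $R^{|\mathcal{A}^c|}$ by a bilinear computation in the spirit of Lemma~\ref{LemmaTypeFreeRank}, using the Frobenius double-duality $(X^\perp)^\perp = X$. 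Lemma~\ref{Lemma_Dimension_analogous} applied in $R^d$ then translates the assumption ``$C_\mathcal{A} \neq 0$ for every $|\mathcal{A}|=n-d$'' into ``$(C^\perp)|_\mathcal{A} \subsetneq R^d$ for every such $\mathcal{A}$''. I would combine this with part~(1) applied to $C^\perp$ and the Frobenius identity $\lambda(C) + \lambda(C^\perp) = n\lambda(R)$ to close the Singleton gap for $C$.

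The step I expect to be hardest is this backward direction of part~(2): converting properness of $(C^\perp)|_\mathcal{A}$ into a genuine length bound on $C^\perp$. Over a field this is immediate from the drop in dimension, but over a Frobenius ring a proper submodule of $R^d$ can still have length as large as $d\lambda(R)-1$, so a direct length count is delicate. My backup plan is a socle reduction: by Proposition~\ref{LemmaDist-DistSoocle} the minimum distance descends to $\soc(C)$, which via $\soc(R^n) \cong K^n$ becomes an honest $K$-code of length $n$ on which the hypothesis restricts cleanly; applying the field case of the proposition yields $\dim_K \soc(C) = n-d+1$, i.e.\ $C$ is MDT. One would then close the argument using Theorem~\ref{ThmEquivMDS} (MDS is equivalent to free plus MDT) together with Lemma~\ref{LemmaTypebound} to extract the required freeness of $C$ from the hypothesis.
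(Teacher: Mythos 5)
Your part (1) and the forward implication of part (2) are correct and essentially the paper's own arguments: part (1) is the injection of Remark~\ref{RemDistMap} together with the length count of Theorem~\ref{ThmSing} (an injection between modules of equal finite length is an isomorphism, and a surjection $C\twoheadrightarrow R^{n-d+1}$ forces the Singleton equality), and the forward half of (2) is the same contrapositive length count $\lambda(C)\le (n-d)\lambda(R)$ that the paper uses.

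The genuine gap is the backward implication of part (2), and you have located it correctly but not closed it. Your primary route stalls exactly where you predict: properness of $(C^\perp)|_{\mathcal{A}}$ inside $R^{d}$ gives no usable length bound over a ring. Your backup route (socle reduction via Proposition~\ref{LemmaDist-DistSoocle}, which is essentially the paper's staircase of windows $\mathcal{A}_i=\{i,\ldots,d+i-1\}$) does prove $\type(C)\ge n-d+1$, hence, since $\type(R)=1$, that $C$ attains the MDT bound of Theorem~\ref{ThmSing}; this is in fact exactly as far as the paper's own proof goes. But the final step you propose --- ``extract the required freeness of $C$ from the hypothesis'' so as to upgrade MDT to MDS via Theorem~\ref{ThmEquivMDS} and Lemma~\ref{LemmaTypebound} --- cannot be carried out, because the hypothesis of (2) does not force freeness. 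Take $R=\mathbb{F}_2[x]/(x^2)$ (a Frobenius chain ring) and $C=\langle (x,x,0),(0,x,x)\rangle\subseteq R^3$, the Frobenius analogue of the example the paper gives after the proposition. Then $d=2$, and each of the three $2$-subsets of coordinates supports a nonzero codeword ($(x,x,0)$, $(0,x,x)$, $(x,0,x)$), so $C_{\mathcal{A}}\neq 0$ for every $\mathcal{A}$ with $|\mathcal{A}|=n-d=1$; yet $\lambda(C)=2<4=(n-d+1)\lambda(R)$, so $C$ is MDT (with $\type(C)=2$) but neither free nor MDS. So the MDT-to-MDS upgrade is not a lemma you could still supply: as written, the ``if'' direction of (2) only yields the type equality $\type(C)=n-d+1$, and the same defect is present in the paper's proof, whose type count via Theorem~\ref{ThmSing} never produces the length equality (equivalently the freeness of $C$) that MDS requires.
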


\begin{proof}
(1) This is a consequence of the proof of Theorem~\ref{ThmSing}. (2) Assume that there is $\mathcal{A}\subseteq[n]$ with cardinality $n-d$ such that $C_{\mathcal{A}}=0$. This implies that $C\cap H_{A^c}=0$. By a similar argument as in Remark~\ref{RemDistMap}, we obtain that $\lambda(C)\leq (n-d)\lambda(R)$. Therefore, $C$ cannot be an MDS code.

Assume that for any $\mathcal{A}\subseteq [n]$ of cardinality $n-d$, we have $\mathcal{A}\cap C\neq 0$. For $1\leq i\leq n-d+1$, define $\mathcal{A}_i=\{i,i+1,\ldots,d+i-1\}$. Then $\lambda(C\cap\mathcal{A}_i)\geq 1$, and as $\mathcal{A}_i\cap\mathcal{A}_j\cap C=\{0\}$, we obtain
$$\type(C)\geq \sum_{i=1}^{n-d+1}\type(C\cap \mathcal{A}_i)\geq n-d+1.$$
By Theorem~\ref{ThmSing}, we obtain the result.
\end{proof}

If $R$ is not Frobenius, Proposition~\ref{Prop.MDSupp}~(2) may not be true. For example, take $R=\mathbb{F}_2[x,y]/(x^2,xy,y^2)$ and let $C$ be the $R$-code generated by $\{(x,x,0),(0,x,x)\}$. In this case, the code \(C\) has length \(n = 3\) and minimum distance \(d = 2\). For any subset \(\mathcal{A} \subseteq [3]\) with \(|\mathcal{A}| = n - d = 1\), the equality \(C|_{\mathcal{A}} = R^{2}\) does not hold. Indeed, the punctured codes \(C|_{\mathcal{A}}\) form proper subsets of \(R^{2}\) with significantly fewer elements.

A desired result in coding theory is that an optimal code (with respect to a certain Singleton-like bound) also has an optimal dual. When the code is defined over a finite field, this dual property is true in the case of the Hamming, the rank, and, sometimes, the sum-rank metric. We now verify that the orthogonal code of an MDS code is also an MDS code when $R$ is an Artinian local ring.

\begin{lemma}\label{LemmmaDirectSummandMDS}
Let $C\subseteq R^n$ be an MDS code. There exists a surjective map $\eta: R^n\to C$ of $R$-modules such that $\eta_{|_C}=1_C$. In particular, $C$ is a direct summand of $R^n$.
\end{lemma}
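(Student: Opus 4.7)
The plan is to construct $\eta$ explicitly as the composition of a coordinate projection of $R^n$ with the inverse of an isomorphism between $C$ and a free summand. Set $d=\hd(C)$ and $k=n-d+1$.

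First, I would invoke Theorem~\ref{ThmEquivMDS} to record that an MDS code is free MDT, so $C$ is a free $R$-module. Combined with the MDS equality $\lambda(C)=k\,\lambda(R)$, this forces $C\cong R^k$, i.e., $C$ is free of rank exactly $k$.

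Next, fix any subset $\mathcal{A}\subseteq[n]$ with $|\mathcal{A}|=d-1$, and consider the projection $\pi_{\mathcal{A}^c}\colon R^n\to R^{k}$ onto the complement. By Proposition~\ref{Prop.MDSupp}(1), its restriction
\[
\pi_{\mathcal{A}^c}\big|_C\colon C\longrightarrow R^{k}
\]
is surjective. Since both $C$ and $R^k$ have length $k\,\lambda(R)$ (finite, because $R$ is Artinian), and any surjective $R$-linear map between modules of the same finite length is an isomorphism (the short exact sequence $0\to \ker\to C\to R^k\to 0$ forces $\lambda(\ker)=0$), we conclude that $\pi_{\mathcal{A}^c}\big|_C$ is an isomorphism. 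Let $\phi\colon R^k\to C$ denote its inverse.

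Finally, I would define
\[
\eta \;=\; \phi\circ \pi_{\mathcal{A}^c}\colon R^n\longrightarrow C.
\]
This $\eta$ is $R$-linear and surjective because $\phi$ is. Moreover, for $v\in C$ we have $\eta(v)=\phi(\pi_{\mathcal{A}^c}(v))=\phi(\pi_{\mathcal{A}^c}|_C(v))=v$, so $\eta|_C=1_C$. The inclusion $\iota\colon C\hookrightarrow R^n$ therefore splits $\eta$, yielding the direct sum decomposition $R^n=C\oplus\ker(\eta)$, which proves the ``in particular'' claim.

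I do not anticipate a serious obstacle: the content has already been packaged in Proposition~\ref{Prop.MDSupp}(1) and Theorem~\ref{ThmEquivMDS}. The only subtle point is the passage from ``surjection'' to ``isomorphism'' in the restricted projection, which we resolve via the length equality rather than via Vasconcelos-type arguments; this is clean precisely because $R$ is Artinian.
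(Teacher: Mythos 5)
Your proof is correct and follows essentially the same route as the paper: fix $\mathcal{A}$ of size $\hd(C)-1$, observe that projecting away those coordinates restricts to an isomorphism on $C$ (the paper gets this from injectivity via Remark~\ref{RemDistMap} plus the MDS length equality, you from surjectivity via Proposition~\ref{Prop.MDSupp}(1) plus the same length count), and set $\eta=\phi^{-1}\circ\pi$. The appeal to Theorem~\ref{ThmEquivMDS} for freeness is harmless but not needed, since the isomorphism $C\cong R^{k}$ already gives it.
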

\begin{proof}
Let $\cA\subseteq [n]$ be a subset of cardinality $\hd(C)-1$. Recall $H_\cA=R^n(\cA)$ and let $\pi:R^n\to R^n/H_\cA$ be the projection map. We have that $ R^n/H_\cA\cong R^{n+1-\hd(C)}$ by Remark~\ref{RemDistMap}, and the restriction $\phi:C\to  R^n/H_\cA$ is an isomorphism because $C$ is MDS. The result follows from taking $\eta=\phi^{-1}\circ\pi$.
\end{proof}

\begin{lemma}\label{LemmaRankOrthogonal}
If $C$ is MDS, then $C^\perp$ is free,  $\rank(C) + \rank(C^\perp)=n$, and $(C^\perp)^\perp=C$.
\end{lemma}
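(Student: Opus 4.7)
My plan is to exploit the fact that, by Theorem~\ref{ThmEquivMDS}, an MDS code $C$ is free, and by Lemma~\ref{LemmmaDirectSummandMDS}, $C$ is a direct summand of $R^n$. Write $k = \rank(C) = n - \hd(C) + 1$, so $R^n \cong C \oplus N$ for some submodule $N$.

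The central step is to use the standard bilinear form to produce an exact sequence. Since $e_i \cdot e_j = \delta_{ij}$, the map $\Phi \colon R^n \to \Hom_R(R^n, R)$ given by $\Phi(v)(w) = v \cdot w$ is an isomorphism. Composing with the restriction $\Hom_R(R^n, R) \to \Hom_R(C, R)$, which is surjective because $C$ is a direct summand (any $R$-linear map $C \to R$ extends by zero on $N$), and noting that the kernel of the composition is exactly $C^\perp$, I obtain a short exact sequence
\[
0 \to C^\perp \to R^n \to \Hom_R(C, R) \to 0.
\]
Since $C \cong R^k$ is free, $\Hom_R(C, R) \cong R^k$ is free too, so the sequence splits: $R^n \cong C^\perp \oplus R^k$. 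Hence $C^\perp$ is a direct summand of a free module, so projective; and since $R$ is local, $C^\perp$ is free. Additivity of length in the split sequence gives $\lambda(C^\perp) = (n-k)\lambda(R)$, so $\rank(C^\perp) = n-k$ and $\rank(C) + \rank(C^\perp) = n$.

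For the double-dual statement, the inclusion $C \subseteq (C^\perp)^\perp$ is automatic. The previous two paragraphs apply verbatim to $C^\perp$ in place of $C$: it is a free direct summand of $R^n$ of rank $n-k$, so $(C^\perp)^\perp$ is free of rank $k$ and hence has length $k\lambda(R) = \lambda(C)$. Combining $C \subseteq (C^\perp)^\perp$ with $\lambda(C) = \lambda((C^\perp)^\perp)$ and the additivity of length (Lemma~\ref{aditividad}), the quotient $(C^\perp)^\perp / C$ has length zero, so $C = (C^\perp)^\perp$.

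The only subtle point, and what I expect to be the main obstacle to state cleanly, is the surjectivity of $R^n \to \Hom_R(C, R)$: this is where the hypothesis that $C$ is a direct summand (equivalently, MDS plus free) is essential, and it is what allows the whole argument to go through without assuming $R$ is Frobenius. Everything else is formal manipulation of the exact sequence together with the local hypothesis to upgrade projectivity to freeness.
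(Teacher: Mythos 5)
Your argument is correct, and it takes a genuinely different route from the paper. The paper proves this lemma by passing to the Nagata idealization $S=R\rtimes W$ of the canonical module, which is Frobenius, lifting $C$ to the $S$-code $C_S$, invoking the Frobenius duality results (Lemma~\ref{Lemma_Dimension_analogous} and Theorem~\ref{Lemma_Free_iff_Free}) to see that $C_S^{\perp}$ is free of rank $n-\rank(C)$, and then descending to $R$ via the projection $S^n\to R^n$ and Nakayama's lemma; indeed, this reduction to Frobenius rings via idealization is advertised in the introduction as one of the paper's techniques. You instead stay entirely inside $R$: from Lemma~\ref{LemmmaDirectSummandMDS} the MDS code $C$ is a free direct summand, so the restriction of the perfect pairing gives the exact sequence $0\to C^{\perp}\to R^n\to \Hom_R(C,R)\to 0$ with free cokernel, which splits; this makes $C^{\perp}$ a free direct summand of the complementary rank, and iterating the same argument for $C^{\perp}$ plus the length comparison yields $(C^{\perp})^{\perp}=C$. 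Your key surjectivity step ($\Hom_R(R^n,R)\twoheadrightarrow\Hom_R(C,R)$ because $C$ is a summand, so a map extends via the projection) is exactly the point where the MDS hypothesis enters, and it correctly replaces the self-injectivity of the Frobenius ring $S$ used in the paper. What each approach buys: yours is more elementary and self-contained (no canonical module, no idealization, no Nakayama descent), and it additionally exhibits $C^{\perp}$ as a direct summand of $R^n$ explicitly; the paper's approach is less direct here but showcases the general strategy of reducing questions over arbitrary Artinian local rings to Frobenius ones, which the authors expect to reuse elsewhere. One small cosmetic remark: the rank identification $k=n-\hd(C)+1$ is not actually needed for the proof --- all you use is that $C$ is a free direct summand --- so you could drop it or keep it only for context.
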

\begin{proof}
Let $W$ be the canonical module of $R$ and $S=R \rtimes W$ the Nagata idealization of $W$, which in this case equal to the injective hull of $R$, $E_R(K)$. That is, $S\cong R\oplus W$ and $S$ is an $R$-module. The product is defined by $(a_1+w_1)(a_2+w_2)=a_1 a_2+a_2 w_1+a_1 w_2$ where $a_i\in R$ and $w_i\in W$. Since \(W\) is the canonical module of \(R\), which is injective, S is an artinian and injective ring, hence \(S\) is Frobenius.

Observe that $R\subseteq S$, and so $R^n\subseteq S^n$. Let $C_S$ be the $S$-code generated by $C$, which is isomorphic to $C\otimes S\to C_S$ via $v\otimes s\mapsto sv$. We have that $C\otimes_R S$ is a direct summand of $S^n$ because $C$ is a direct summand of $R^n$ by Lemma~\ref{LemmmaDirectSummandMDS}. Then, $C_S$ is a free $S$-code and $\rank_R(C)=\rank_S(C_S)$. Therefore, we obtain that
$C_S^\perp$ is a free code of $$ \rank_S(C^\perp)=\frac{\lambda_S(C_S^\perp)}{\lambda_S(S)}=n-\frac{\lambda_S(C_S)}{\lambda_S(S)}=n-\rank_S(C)$$
by Lemma~\ref{Lemma_Dimension_analogous} and Theorem~\ref{Lemma_Free_iff_Free}.

Observe $C^\perp\subseteq C_S^\perp$. Moreover, if $\psi :S^n\to R^n$ is the natural projection, we have $\psi(C^\perp_S)=C^\perp$. As the kernel of $\psi$ is $(0\oplus W)S^n$, it follows that $\psi(C^\perp\oplus 0)=\psi((C_S)^\perp)$. Since $C^\perp_S$ is a free direct summand of $S^n$, we can find a basis of $C^\perp_S$ in $C^\perp$ using  that $\psi(C^\perp\oplus 0)=\psi((C_S)^\perp)$ and Nakayama's Lemma. Hence, $C^\perp$ is a free $R$-code because $C^\perp_S$ is a free $S$-code generated by elements in $C$. Moreover, $\rank_S(C_S^\perp)=\rank_R(C^\perp)$.

We now show the last claim. We note that $C\subseteq (C^\perp)^\perp$. Since $C$ and $(C^\perp)^\perp$ are both free of the same rank, they have the same length, and the claim follows.
\end{proof}

\begin{theorem}\label{Theo.MDSduals}
If $R$ is a local Artinian ring, $C$ is MDS if and only if $C^\perp$ is MDS.
\end{theorem}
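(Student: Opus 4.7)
The plan is to prove each direction using the structural results of Lemma~\ref{LemmaRankOrthogonal} (whose proof has already invoked the Nagata idealization $S = R \rtimes W$ to control the dual of an MDS code) together with short direct arguments.

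For the forward direction, I will assume $C$ is MDS of rank $k$. Lemma~\ref{LemmaRankOrthogonal} gives that $C^\perp$ is free of rank $n-k$, so Theorem~\ref{ThmSing} yields $\hd(C^\perp) \leq k+1$. For the matching lower bound, I will take $v \in C^\perp$ with $\w_H(v) \leq k$, extend $\supp(v)$ to a set $\mathcal{A} \subseteq [n]$ of size $k$, and observe that since $|\mathcal{A}^c| = n-k < \hd(C)$, no nonzero element of $C$ is supported in $\mathcal{A}^c$. Hence the projection $\pi_\mathcal{A}|_C \colon C \to R^\mathcal{A}$ is injective between two free $R$-modules of rank $k$ and so of the same finite length $k\lambda(R)$, making it an isomorphism. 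The orthogonality $v \cdot c = v|_\mathcal{A} \cdot c|_\mathcal{A} = 0$ for every $c \in C$ will then force $v|_\mathcal{A}$ to be orthogonal to all of $R^\mathcal{A}$, giving $v = 0$ and completing $\hd(C^\perp) = k+1$.

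For the backward direction, I will assume $C^\perp$ is MDS of rank $\ell$ and apply the forward direction to $C^\perp$, obtaining that $C' := (C^\perp)^\perp$ is MDS; Lemma~\ref{LemmaRankOrthogonal} applied to $C^\perp$ will give that $C'$ is free of rank $n-\ell$ with $(C')^\perp = C^\perp$. The theorem then reduces to showing $C = C'$. I plan to use the pairing-induced $R$-linear map $\phi \colon R^n/C^\perp \to \Hom_R(C',R)$, $x + C^\perp \mapsto (c' \mapsto x \cdot c')$, whose kernel is $(C')^\perp/C^\perp = 0$. Because $\lambda(R^n/C^\perp) = (n-\ell)\lambda(R) = \lambda(\Hom_R(C', R))$ (using $C' \cong R^{n-\ell}$), the injection $\phi$ is actually an isomorphism. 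Composing $\phi$ with the restriction $\Hom_R(C', R) \to \Hom_R(C, R)$ yields the analogous injection for $C$, so the restriction itself is injective, whence $\Hom_R(C'/C, R) = 0$.

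The main obstacle will be to conclude $C'/C = 0$ from $\Hom_R(C'/C, R) = 0$. This is where the Artinian local structure of $R$ becomes essential: any nonzero finitely generated $R$-module $M$ admits a surjection onto the residue field $K$ (via the quotient by a maximal submodule), and any nonzero element of the nonzero socle $\soc(R)$ provides an injective $R$-linear map $K \hookrightarrow R$. Composing yields a nonzero element of $\Hom_R(M, R)$; applied to $M = C'/C$, this forces $C = C' = (C^\perp)^\perp$, and thus $C$ is MDS.
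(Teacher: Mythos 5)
Your proposal is correct, and it is worth comparing with the paper's own proof. For the forward direction you follow essentially the same route: freeness of $C^\perp$ and $\rank(C)+\rank(C^\perp)=n$ come from Lemma~\ref{LemmaRankOrthogonal} (hence from the Nagata idealization), the upper bound $\hd(C^\perp)\leq k+1$ comes from Theorem~\ref{ThmSing}, and the lower bound exploits that an MDS code surjects onto any $k$ coordinates; the paper phrases this last step as $C+H_{\cA}=R^n$ for $|\cA|=d-1$ and then takes orthogonals to get $C^\perp\cap H_{\cA^c}=0$, while you phrase it as the projection $\pi_{\cA}|_C$ being an injection between free modules of equal length, hence an isomorphism --- the same mechanism, just made explicit. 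Where you genuinely diverge is the backward direction. The paper's written proof only establishes ``$C$ MDS $\Rightarrow$ $C^\perp$ MDS'' and leaves the converse to the reader; note that it is not formally symmetric, since Lemma~\ref{LemmaRankOrthogonal} gives $(C^\perp)^\perp=C$ only under the hypothesis that $C$ is MDS, so one still has to rule out $C\subsetneq(C^\perp)^\perp$ when only $C^\perp$ is assumed MDS. You close exactly this point: the injection $R^n/C^\perp\hookrightarrow\Hom_R\bigl((C^\perp)^\perp,R\bigr)$ is an isomorphism by a length count, restriction to $C$ is then injective, so $\Hom_R\bigl((C^\perp)^\perp/C,\,R\bigr)=0$, and the Nakayama-plus-socle observation that $\Hom_R(M,R)\neq 0$ for every nonzero finitely generated module over an Artinian local ring forces $C=(C^\perp)^\perp$. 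All the ingredients you invoke (freeness of MDS codes from Theorem~\ref{ThmEquivMDS}, the statements of Lemma~\ref{LemmaRankOrthogonal}, additivity of length) are available at this point in the paper, so your argument is a complete and slightly more self-contained treatment of the equivalence; the paper's version buys brevity at the cost of leaving the double-perp identification implicit in the converse.
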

\begin{proof}
Assume that $C$ is an MDS code of minimum distance $d$. For any $\mathcal{A}\subseteq[n]$ with cardinality $|\cA|\geq d-1$, we have $\lambda(C(\cA))=\lambda(R)(|\cA|-d+1)$ and $C+H_{\cA}=R^n$. Taking orthogonals, we obtain $C^\perp\cap H_{\cA^c}=\{0\}$, then the minimum distance of $C^\perp$ is at least $n-d+2$. We have that $C^\perp$ is free of rank $n-\rank(C)$ by Lemma~\ref{LemmaRankOrthogonal}. Thus,
\begin{align*}
\hd(C^\perp)
&\leq n-\frac{\lambda(C^\perp)}{\lambda(R)}+1 \text{ (by Theorem~\ref{ThmSing}) }\\
& =n-\rank(C^\perp)+1 \text{ (because $C^\perp$ is free) }\\
& =n-(n-\rank(C))+1\\
&=\rank(C)+1\\
&= \frac{\lambda(C)}{\lambda(R)}+1\text{ (because $C^\perp$ is free by Theorem~\ref{ThmEquivMDS}) }\\
&=n-d+2.
\end{align*}
We conclude that $C^\perp$ is MDS of minimum distance $n-d+2$.
\end{proof}

\section{Enumerator polynomials and MacWilliams identities}\label{SecMacWilliams}
In this section, $C$ denotes an $R$-code, where $(R,\m, K)$ is an Artinian local ring. We define an analogous polynomial to the weight enumerator, but it depends on the length of the code rather than its cardinality. We then obtain a MacWilliams identity theorem for Frobenius local rings, even if they are not finite. To define the new polynomial, 
we introduce a variable $z$ which, when $K = \mathbb{F}_q$ and the polynomial is evaluated at $z = q$, yields the classical MacWilliams identity.


\begin{definition}
If $R$ is finite, the weight enumerator of $C$ is defined by
$$W_C(x,y)=\sum^n_{i=1} A^C_i x^{n-i}y^{i},$$
where $A^C_i=|\{v\in C\ :\ wt(v)=i\}|$. 
\end{definition}
\begin{definition}
We define  the length enumerator of $C$ by 
\[
L_C(x,y,z) = \sum_{\substack{\cA \subseteq [n] }} 
z^{\lambda(C(\cA))} x^{n-|\cA|} y^{|\cA|}.
\]
\end{definition}

Unlike the weight enumerator, the length enumerator does not count individual codewords based on their weight. Instead, it groups codewords according to subsets of their support and assigns weights accordingly. The following result shows the relation between the weight and length enumerators. Specifically, we prove that if $|K| = q$, the evaluation of $L_C(x,y,z)$ at $z=q$ recovers the weight enumerator.

\begin{proposition}\label{Prop L vs W finite}
If $R$ is finite and $K=\mathbb{F}_q$, then
$L_C(x,y,q)=W_C(x+y,y)$.
\end{proposition}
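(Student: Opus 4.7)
The plan is to evaluate $L_C(x,y,q)$ at $z=q$ by first converting lengths into cardinalities, then swapping the order of summation and collapsing the resulting inner sum with the binomial theorem.

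First, since $R$ is finite with residue field $K = \mathbb{F}_q$, Remark~\ref{RemLenCardinality} gives $|M| = q^{\lambda(M)}$ for any finitely generated $R$-module $M$. Applying this to $M = C(\cA)$, we get $q^{\lambda(C(\cA))} = |C(\cA)|$, so
\[
L_C(x,y,q) = \sum_{\cA \subseteq [n]} |C(\cA)|\, x^{n-|\cA|} y^{|\cA|}
= \sum_{\cA \subseteq [n]} \sum_{\substack{v \in C \\ \supp(v) \subseteq \cA}} x^{n-|\cA|} y^{|\cA|}.
\]

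Next, I would swap the order of summation to sum over codewords $v$ first:
\[
L_C(x,y,q) = \sum_{v \in C}\ \sum_{\substack{\cA \subseteq [n] \\ \cA \supseteq \supp(v)}} x^{n-|\cA|} y^{|\cA|}.
\]
For a fixed $v$ with $\w_H(v) = i$, parameterize the subsets $\cA \supseteq \supp(v)$ of size $k$; there are $\binom{n-i}{k-i}$ of them. Hence the inner sum becomes
\[
\sum_{k=i}^{n} \binom{n-i}{k-i} x^{n-k} y^{k} = y^i \sum_{j=0}^{n-i} \binom{n-i}{j} x^{n-i-j} y^{j} = y^i (x+y)^{n-i},
\]
by the binomial theorem.

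Finally, grouping codewords by their Hamming weight using the constants $A_i^C = |\{v \in C : \w_H(v) = i\}|$ yields
\[
L_C(x,y,q) = \sum_{v \in C} y^{\w_H(v)} (x+y)^{n-\w_H(v)} = \sum_{i=0}^{n} A_i^C\, (x+y)^{n-i} y^i = W_C(x+y,y),
\]
which is the desired identity. There is no real obstacle here: the argument is a clean swap-and-binomial calculation once the finiteness hypothesis is used to translate $z^{\lambda(C(\cA))}$ at $z=q$ into the cardinality $|C(\cA)|$.
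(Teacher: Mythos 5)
Your proof is correct and follows essentially the same route as the paper's: translate $q^{\lambda(C(\cA))}$ into $|C(\cA)|$ via Remark~\ref{RemLenCardinality}, swap the order of summation (you sum over individual codewords where the paper sums over exact-support classes, a cosmetic difference), and collapse the inner sum over supersets with the binomial theorem. No gaps.
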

\begin{proof}
By Remark \ref{RemLenCardinality}, we know $q^{\lambda(C(\cA))}=|C(\cA)|$ for any $\cA\subseteq [n]$. Then 

\[
L_C(x,y,q) =\sum_{\substack{\cA \subseteq [n] }} q^{\lambda(C(\cA))} x^{n-|\cA|} y^{|\cA|}
=  \sum_{\substack{\cA \subseteq [n]}} |C(\cA)| x^{n-|\cA|} y^{|\cA|}.
\]

We group the vectors in $C(\cA)$ by their exact support. 
For a subset $\cB \subseteq \cA$, let $C_\cB = \{ v \in C : \supp(v) = \cB \}$, and write $|C(\cA)| = \sum_{\cB \subseteq \cA} |C_\cB|.$

Thus 
\[
L_C(x,y,q) = \sum_{\substack{\cA \subseteq [n] }} \sum_{\cB \subseteq \cA} |C_\cB| x^{n-|\cA|} y^{|\cA|}.
\]

Now, fix $\cB \subseteq [n]$ of size $|\cB| = i$. 
For each $\cB$, the number of subsets $\cA$ such that $\cB \subseteq \cA$ and $|\cA| = i+j$ is $\binom{n-i}{j}$. 
Therefore, 

\begin{align*}
L_C(x,y,q) 
  &= \sum_{\substack{\cB \subseteq [n] \\ |\cB| = i}} |C_\cB| 
     \sum_{j=0}^{n-i} \binom{n-i}{j} x^{\,n-i-j} y^{\,i+j} \\[6pt]
  &= \sum_{i=0}^{n} \sum_{\substack{\cB \subseteq [n] \\ |\cB| = i}} 
     |C_\cB| \left( \sum_{j=0}^{n-i} \binom{n-i}{j} x^{\,n-i-j} y^{\,j} \right) y^i \\[6pt]
  &= \sum_{i=0}^{n} \sum_{\substack{\cB \subseteq [n] \\ |\cB| = i}} 
     |C_\cB| (x+y)^{\,n-i} y^i \\[6pt]
  &= \sum_{i=0}^{n} 
     \left( \sum_{\substack{\cB \subseteq [n] \\ |\cB| = i}} |C_\cB| \right) (x+y)^{\,n-i} y^i \\[6pt]
  &= \sum_{i=0}^{n} A^C_i (x+y)^{\,n-i} y^i \\[6pt]
  &= W_C(x+y,y),
\end{align*}

which completes the proof.

%


%
%
\end{proof}

Shiromoto studied the enumerator of linear codes and proved  Proposition~\ref{Prop L vs W finite} for finite fields~\cite{Shiromoto, Shiromoto1999}. This proposition extends that approach to a broader algebraic context.

\begin{definition}
We define the  $i$-th weight polynomial $g^C_{i}(z)$ by
$$
g^C_{i}(z)=\sum^i_{a=0} \sum_{|\cA|=a}(-1)^{i-a}\binom{n-a}{i-a}z^{\lambda(C(\cA))}.$$
\end{definition}

Regrouping the terms in $L_C(x,y,z)$ according to the size of the sets $\cA$, the $i$-th weight polynomial recovers the number of codewords of weight $i$.
\begin{proposition}\label{Prop G number codewords}
If $R$ is finite and $K=\mathbb{F}_q$, then
$g^C_i(q)=A^C_i$.
\end{proposition}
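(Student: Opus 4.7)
The plan is to recognize $g^C_i(z)$ as the coefficient of $x^{n-i}y^i$ in the polynomial $L_C(x-y,y,z)$, and then deduce the equality by invoking Proposition~\ref{Prop L vs W finite} after the change of variables $x \mapsto x-y$.

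First, starting from the definition of $L_C$, I would apply the substitution $x \mapsto x-y$ and expand $(x-y)^{n-a}$, where $a = |\cA|$, using the binomial theorem:
$$L_C(x-y,y,z) = \sum_{\cA \subseteq [n]} z^{\lambda(C(\cA))} (x-y)^{n-|\cA|} y^{|\cA|} = \sum_{\cA \subseteq [n]} \sum_{k=0}^{n-|\cA|} (-1)^k \binom{n-|\cA|}{k} z^{\lambda(C(\cA))} x^{n-|\cA|-k} y^{|\cA|+k}.$$
Reindexing by $i = |\cA| + k$ and grouping $\cA$'s by their cardinality $a$, the coefficient of $x^{n-i}y^i$ is precisely
$$\sum_{a=0}^{i} \sum_{|\cA|=a} (-1)^{i-a} \binom{n-a}{i-a} z^{\lambda(C(\cA))} = g^C_i(z).$$
Hence
$$L_C(x-y,y,z) = \sum_{i=0}^{n} g^C_i(z)\, x^{n-i} y^i.$$

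Next, I would apply Proposition~\ref{Prop L vs W finite}, which gives $L_C(x,y,q) = W_C(x+y,y)$. Substituting $x \mapsto x-y$ in this identity yields $L_C(x-y,y,q) = W_C(x,y) = \sum_{i=0}^{n} A^C_i\, x^{n-i} y^i$. Comparing this with the expression derived above at $z = q$ and matching the coefficient of $x^{n-i}y^i$ on both sides delivers the equality $g^C_i(q) = A^C_i$.

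The manipulations involved are purely formal binomial bookkeeping, so there is no serious obstacle once one notices that $g^C_i(z)$ is exactly the coefficient extraction from $L_C$ under the shift $x \mapsto x-y$. The conceptual point is that this shift converts ``support-contains'' sums into ``support-equals'' sums via Möbius inversion on the Boolean lattice, which is what relates the length enumerator to the weight enumerator.
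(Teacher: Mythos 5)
Your proposal is correct and follows essentially the same route as the paper: both expand $L_C(x-y,y,\cdot)$ by the binomial theorem, reindex to identify the coefficient of $x^{n-i}y^i$ with $g^C_i$, and then invoke Proposition~\ref{Prop L vs W finite} to match these coefficients against $A^C_i$ in $W_C(x,y)$. The only difference is cosmetic: you carry out the expansion at general $z$ before specializing to $z=q$, whereas the paper works at $z=q$ throughout.
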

\begin{proof}
We have that
\begin{align*}
W_C(x,y)&=L_C(x-y,y,q) \text{ (by Proposition~\ref{Prop L vs W finite}) }\\
   &=\sum_{\cA\subseteq[n]} q^{\lambda(C(\cA))} (x-y)^{n-|\cA|}y^{|\cA|}\\
    &=\sum_{a=0}^n\sum_{\substack{\cA\subseteq [n]\\ |\cA|=a}}q^{\lambda(C(\cA))}(x-y)^{n-a}y^a\\
    &=\sum_{a=0}^n\sum_{\substack{\cA\subseteq [n]\\ |\cA|=a}}q^{\lambda(C(\cA))}\left(\sum_{i=0}^{n-a}(-1)^i{n-a\choose i} x^{n-a-i}y^i\right)y^a\\
    &=\sum_{a=0}^n\sum_{j=a}^n\sum_{\substack{A\subseteq [n]\\ |\cA|=a}}q^{\lambda(C(\cA))}(-1)^{j-a}{n-a\choose j-a}x^{n-j}y^j\\
&    =\sum_{j=0}^n\sum_{a=0}^j\sum_{\substack{\cA\subseteq [n]\\ |\cA|=a}}q^{\lambda(C(\cA))}(-1)^{j-a}{n-a\choose j-a}x^{n-j}y^j\text{ (by rearranging the sums) }\\
&=\sum_{j=0}^n g^C_i(q)x^{n-j}y^j.
\end{align*}
By the definition of the weight enumerator, we obtain $g^C_i(q)=A^C_i$.
\end{proof}

Proposition~\ref{Prop G number codewords} extends the proof for finite fields given by Shiromoto~\cite{Shiromoto}. 


\begin{corollary}\label{CorExtension}
Let $(S,\n,L)$ be a flat $R$-algebra such that $S$ is zero-dimensional ring with $\theta=\lambda_S(S/\m S)$.
Then, $g^{C\otimes_R S}_i(z)=g^C_i(z^\theta)$, where $C\otimes_R S\subseteq S^n$ is an $S$-code.
\end{corollary}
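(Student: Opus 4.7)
The plan is to reduce the statement to a single length-multiplication identity under flat base change, and then substitute it directly into the defining formula of $g^C_i(z)$.

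First, I would observe that
\[
g^{C\otimes_R S}_i(z) = \sum_{a=0}^{i}\sum_{|\mathcal{A}|=a}(-1)^{i-a}\binom{n-a}{i-a}\, z^{\lambda_S((C\otimes_R S)(\mathcal{A}))},
\]
so the whole question reduces to expressing $\lambda_S\bigl((C\otimes_R S)(\mathcal{A})\bigr)$ in terms of $\lambda_R(C(\mathcal{A}))$. The key intermediate step is an identification $(C\otimes_R S)(\mathcal{A}) \cong C(\mathcal{A})\otimes_R S$ as $S$-submodules of $S^n$. This is immediate from flatness: $C(\mathcal{A})$ is the kernel of the composition $C\hookrightarrow R^n \twoheadrightarrow R^{|\mathcal{A}^c|}$, and since $S$ is $R$-flat, tensoring preserves this kernel, giving $C(\mathcal{A})\otimes_R S = \ker(C\otimes_R S\to S^{|\mathcal{A}^c|}) = (C\otimes_R S)(\mathcal{A})$.

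The main content is then the claim that for every finitely generated $R$-module $M$,
\[
\lambda_S(M\otimes_R S) \;=\; \theta\cdot\lambda_R(M).
\]
I would prove this by induction on $\lambda_R(M)$. The base case $\lambda_R(M)=1$ forces $M\cong R/\mathfrak{m}=K$, so $M\otimes_R S\cong S/\mathfrak{m}S$, which has length $\theta$ by hypothesis. For the inductive step, pick any simple submodule $M'\subseteq M$ (which exists because $R$ is Artinian and $M$ is f.g.), and apply the short exact sequence $0\to M'\to M\to M/M'\to 0$. Flatness of $S$ over $R$ preserves exactness, and by additivity of length (Lemma~\ref{aditividad}~(3)) applied over $S$ together with the induction hypothesis,
\[
\lambda_S(M\otimes_R S)=\lambda_S(M'\otimes_R S)+\lambda_S((M/M')\otimes_R S)=\theta+\theta\cdot\lambda_R(M/M')=\theta\cdot\lambda_R(M).
\]
Here one must know that $S$ is Artinian, so that $M\otimes_R S$ has finite length; this is where the zero-dimensionality hypothesis on $S$ (combined with Noetherianity inherited via flatness of a f.g. algebra-like structure, or in any case postulated by the Artinian hypothesis on $S$) is used.

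Applying the identity with $M = C(\mathcal{A})$ then yields $z^{\lambda_S((C\otimes_R S)(\mathcal{A}))} = z^{\theta\lambda_R(C(\mathcal{A}))} = (z^\theta)^{\lambda_R(C(\mathcal{A}))}$, so substituting into the first display gives
\[
g^{C\otimes_R S}_i(z) = \sum_{a=0}^{i}\sum_{|\mathcal{A}|=a}(-1)^{i-a}\binom{n-a}{i-a}(z^\theta)^{\lambda_R(C(\mathcal{A}))} = g^C_i(z^\theta),
\]
as desired. The only step with any delicacy is the flat-base-change identification $(C\otimes_R S)(\mathcal{A})\cong C(\mathcal{A})\otimes_R S$; the rest is straightforward induction and substitution.
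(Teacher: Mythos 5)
Your proof is correct and follows essentially the same route as the paper: identify $(C\otimes_R S)(\mathcal{A})$ with $C(\mathcal{A})\otimes_R S$ using flatness (the paper phrases this as intersection with $H_{\mathcal{A}}$ commuting with flat base change, you phrase it as a kernel, which is the same thing), then convert $\lambda_S$ to $\theta\lambda_R$ and substitute into the defining sum for $g_i^C$. The only difference is that you spell out, by induction on a composition series, the length-multiplication identity $\lambda_S(M\otimes_R S)=\theta\lambda_R(M)$ that the paper states with only the remark $K\otimes_R S\cong S/\mathfrak{m}S$, so your write-up is if anything slightly more complete.
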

\begin{proof}
Take $\cA\subseteq [n]$ and recall that $H_\cA=R^n(\cA)$. Then, $(C\cap H_\cA)\otimes_R S=C\otimes_R S\cap H_\cA\otimes_R S$ because $S$ is flat. As $K\otimes_R S=S/\m S$, we get
$$
\lambda_S(C\otimes_R S(\cA))=\lambda_S (C\otimes_R S\cap H_\cA\otimes_R S)=\lambda_S((C\cap H_\cA)\otimes_R S)=\theta\lambda_R(C\cap H_\cA)
=\lambda_R(C(\cA)).
$$
Therefore, 
\begin{align*}
g^{C\otimes_R S}_{i}(z) & =\sum^i_{a=0} \sum_{|\cA|=a}(-1)^{i-a}\binom{n-a}{i-a}z^{\lambda_S(C\otimes_R S(\cA))}\\
&=\sum^i_{a=0} \sum_{|\cA|=a}(-1)^{i-a}\binom{n-a}{i-a}z^{\theta \lambda_R(C(\cA))}\\
&=\sum^i_{a=0} \sum_{|\cA|=a}(-1)^{i-a}\binom{n-a}{i-a}\left( z^\theta\right)^{\lambda_R(C(\cA))}
=g^{C}_{i}(z^\theta),
\end{align*}
which completes the proof.
\end{proof}

When $R$ is a finite field, Corollary~\ref{CorExtension} has a critical interpretation. Specifically, $\theta=1$ in this case, and the polynomial $g^C_i(z)$ determines the number of codewords that a code has when it is considered in a finite field extension.

\begin{example}\label{ExFieldExt}
Take $R=\FF_q$ and let $C\subseteq R^3$ be the code generated by the elements
$\begin{pmatrix} 1, 1, 0 \end{pmatrix}$ and $\begin{pmatrix} 1, 0, 1 \end{pmatrix}$. Then, 
$$g^C_0(z)=1,\;
g^C_1(z)=0,\;
g^C_2(z)=3z-3\;
\text{ and }\;
g^C_3(z)=z^2-3z+2.$$
We have that $C$ is a constant-weight code if and only if $q=2$. We can also see that the growth of the number of codewords with weight $3$ is quadratic with respect to $q$.
\end{example}

The proposition \ref{Prop L vs W finite} motivates the definition of weight enumerators for rings that are not necessarily finite.

\begin{definition}
We define the weight enumerator of $C$ by 
$$W_C(x,y,z)=\sum^n_{i=1} g^C_{i}(z)x^{n-i}y^{i}.$$
\end{definition}

\begin{proposition}\label{Prop L vs W infinite}
We have that $L_C(x,y,z)=W_C(x+y,y,z)$.
\end{proposition}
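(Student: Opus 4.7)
The plan is to verify the identity by direct formal manipulation: I will compute $L_C(x-y,y,z)$, recognize the resulting coefficients as exactly $g^C_i(z)$, and then substitute $x\mapsto x+y$. This amounts to showing that the map $g^C_\bullet \leftrightarrow T_\bullet$, where $T_a(z):=\sum_{|\mathcal{A}|=a}z^{\lambda(C(\mathcal{A}))}$, is the standard binomial inversion.

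Concretely, I begin by grouping the definition of $L_C$ according to the cardinality of $\mathcal{A}$:
$$L_C(x-y,y,z)=\sum_{a=0}^{n} T_a(z)\,(x-y)^{n-a}y^{a}.$$
Expanding $(x-y)^{n-a}=\sum_{k=0}^{n-a}(-1)^{k}\binom{n-a}{k}x^{n-a-k}y^{k}$ via the binomial theorem and substituting $i=a+k$ (so $x^{n-a-k}y^{k}y^{a}=x^{n-i}y^{i}$), I interchange the two finite sums to write
$$L_C(x-y,y,z)=\sum_{i=0}^{n}\left(\sum_{a=0}^{i}(-1)^{i-a}\binom{n-a}{i-a}T_a(z)\right)x^{n-i}y^{i}.$$
The inner bracket is, by definition, precisely $g^C_i(z)$. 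Hence $L_C(x-y,y,z)=\sum_{i=0}^n g^C_i(z)x^{n-i}y^{i}=W_C(x,y,z)$, and replacing $x$ by $x+y$ yields the claimed identity $L_C(x,y,z)=W_C(x+y,y,z)$.

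There is no genuine obstacle here: the statement is a formal polynomial identity in $x,y,z$ with integer coefficients in the $z^{\lambda(C(\mathcal{A}))}$, and no binomial-coefficient identity beyond the binomial theorem itself is needed. The only care required is the bookkeeping in the reindexing $i=a+k$ and the interchange of sums, both of which are routine since the sums are finite. This also supplies a cleaner conceptual statement of Proposition~\ref{Prop L vs W finite}: that proposition becomes the special case $z=q$ of the identity just established, stripping away the reliance on $|C(\mathcal{A})|=q^{\lambda(C(\mathcal{A}))}$ and thereby extending from finite base rings to the present generality.
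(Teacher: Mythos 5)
Your argument is correct and is essentially the computation the paper intends: its proof of Proposition~\ref{Prop L vs W infinite} simply points to the analogous manipulation in Propositions~\ref{Prop L vs W finite}/\ref{Prop G number codewords} together with the definition of $g^C_i(z)$, which is exactly the binomial expansion, reindexing $i=a+k$, and coefficient identification you carry out explicitly. The only cosmetic discrepancy is that you (correctly) let the sum defining $W_C$ start at $i=0$, whereas the paper's displayed definition writes $i=1$; this matches how the paper actually uses the enumerator and is not a mathematical issue.
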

\begin{proof}
This is analogous to Proposition~\ref{Prop L vs W finite} and the definition of $g^C_{i}(z)$.
\end{proof}

As an example, we compute the weight enumerator of MDS codes. For $\cA\subseteq [n]$, recall that $C(\cA)=\{ v\in C\; |\; \supp(v)\subseteq \cA\}$.
\begin{proposition}
$C$ is MDS iff $\lambda(C(\cA))=\max\{0,|\cA|-d+1\}\lambda(R)$ for any $\cA\subseteq [n]$.
\end{proposition}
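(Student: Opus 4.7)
My plan is to prove both implications directly, using the short exact sequence that links $C(\cA)$ with the puncturing $C|_\cA = \pi_{\cA^c}(C)$, together with additivity of length and Proposition~\ref{Prop.MDSupp}(1).

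For the forward direction, I set $d=\hd(C)$ and split into cases on $|\cA|$. If $|\cA|<d$, then any $v\in C(\cA)$ has $\w_H(v)\le|\cA|<d$, which forces $v=0$; thus $\lambda(C(\cA))=0=\max\{0,|\cA|-d+1\}\lambda(R)$. If $|\cA|\ge d$, I use the short exact sequence
\[
0\longrightarrow C(\cA)\longrightarrow C\longrightarrow C|_{\cA}\longrightarrow 0,
\]
where $C|_{\cA}\cong (C+H_\cA)/H_\cA=\pi_{\cA^c}(C)\subseteq R^{n-|\cA|}$. By Lemma~\ref{aditividad}(3), $\lambda(C(\cA))=\lambda(C)-\lambda(C|_\cA)$, and since $C$ is MDS, $\lambda(C)=(n-d+1)\lambda(R)$.

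The remaining step is to show $C|_\cA=R^{n-|\cA|}$ whenever $|\cA|\ge d-1$. To do so, I pick $\cA'\subseteq\cA$ with $|\cA'|=d-1$; by Proposition~\ref{Prop.MDSupp}(1), $C|_{\cA'}=R^{n-d+1}$. Factoring the projection as $\pi_{\cA^c}=\pi'\circ\pi_{\cA'^c}$, where $\pi':R^{n-d+1}\to R^{n-|\cA|}$ is a further coordinate projection (in particular surjective), I obtain $C|_{\cA}=\pi'(C|_{\cA'})=\pi'(R^{n-d+1})=R^{n-|\cA|}$. Hence $\lambda(C|_\cA)=(n-|\cA|)\lambda(R)$ and
\[
\lambda(C(\cA))=(n-d+1)\lambda(R)-(n-|\cA|)\lambda(R)=(|\cA|-d+1)\lambda(R),
\]
which matches the claimed formula (and specializes correctly to $0$ at $|\cA|=d-1$).

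For the converse, I simply evaluate the hypothesis at $\cA=[n]$, obtaining $\lambda(C)=(n-d+1)\lambda(R)$, i.e.\ $\hd(C)=n-\lambda(C)/\lambda(R)+1$; this is equality in the MDS bound of Theorem~\ref{ThmSing}, so $C$ is MDS. I do not anticipate a real obstacle here; the only point that requires care is the identification $(C+H_\cA)/H_\cA\cong C|_\cA$ and the compatibility of successive coordinate projections, but both are standard.
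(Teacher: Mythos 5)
Your proof is correct and follows essentially the same route as the paper's: the paper computes $\lambda(C(\cA))$ from $\lambda(C+H_\cA)=\lambda(C)+\lambda(H_\cA)-\lambda(C(\cA))$ together with $C+H_\cA=R^n$ for $|\cA|\ge d-1$, which is exactly your exact sequence $0\to C(\cA)\to C\to C|_\cA\to 0$ with $C|_\cA\cong (C+H_\cA)/H_\cA=R^{n-|\cA|}$, and the converse is the same evaluation at $\cA=[n]$.
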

\begin{proof}
($\Rightarrow$) Assume $C$ is MDS. Let $\cA\subseteq [n]$ be a subset of cardinality $a = |\cA|$. If $a\leq d-1$, then $C(\cA)=\{0\}$. If $a=d-1$, we obtain that $C+H_{\cA}=R^n$ by Remark~\ref{RemDistMap}. If $a>d-1$, then $C+H_{\cA}=R^n$, and so,
$$n\lambda(R)=\lambda(C)+\lambda(H_{\cA})-\lambda(C(\cA))\Longrightarrow \lambda(C(\cA))=\lambda(R)(n-d+1+a-n)=\lambda(R)(a-d+1).$$
In any case of $a$, the result follows.

($\Leftarrow$) By taking $\cA=[n]$, we obtain $C(\cA)=C$ and $\lambda(C(\cA))=(n-d+1)\lambda(R)$. 
\end{proof}

\begin{corollary}
If $C$ is MDS, then
$$g^C_i(z)=\sum_{a=0}^i (-1)^{i-a} {n\choose i}{i\choose a}z^{\max\{0,a-d+1\}\lambda(R)}.$$
\end{corollary}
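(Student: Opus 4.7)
The plan is a direct symbolic manipulation starting from the definition
\[
g^C_{i}(z)=\sum^i_{a=0} \sum_{|\cA|=a}(-1)^{i-a}\binom{n-a}{i-a}z^{\lambda(C(\cA))}.
\]
First I would invoke the proposition immediately preceding this corollary: when $C$ is MDS, the exponent $\lambda(C(\cA))$ depends only on the cardinality $a=|\cA|$ and equals $\max\{0,a-d+1\}\lambda(R)$. Because the summand is then independent of the particular subset $\cA$, the inner sum over subsets of size $a$ collapses to its multiplicity $\binom{n}{a}$, producing
\[
g^C_{i}(z)=\sum^i_{a=0} \binom{n}{a}(-1)^{i-a}\binom{n-a}{i-a}z^{\max\{0,a-d+1\}\lambda(R)}.
\]

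Next I would apply the elementary binomial identity
\[
\binom{n}{a}\binom{n-a}{i-a}=\binom{n}{i}\binom{i}{a},
\]
which is immediate upon writing both sides as $\tfrac{n!}{a!(i-a)!(n-i)!}$. Since $\binom{n}{i}$ is independent of $a$, I would pull it out of the sum, yielding exactly the expression claimed in the statement.

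There is no real obstacle here; the argument is pure combinatorial bookkeeping on top of the characterization $\lambda(C(\cA))=\max\{0,|\cA|-d+1\}\lambda(R)$ recalled above. The only point worth noting is that terms with $a<d-1$ contribute nontrivially with exponent $0$, which is precisely why the MDS description in terms of $\max\{0,a-d+1\}$ is the convenient formulation: it keeps the full range $0\le a\le i$ in the sum rather than forcing an artificial truncation.
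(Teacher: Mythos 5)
Your argument is correct and matches the paper's proof essentially step for step: both use the MDS characterization $\lambda(C(\cA))=\max\{0,|\cA|-d+1\}\lambda(R)$ from the preceding proposition to collapse the inner sum to a factor $\binom{n}{a}$, and then apply the identity $\binom{n}{a}\binom{n-a}{i-a}=\binom{n}{i}\binom{i}{a}$. Nothing further is needed.
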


\begin{proof}
As $\lambda(C(\mathcal{A}))$ only depends on the cardinality of $\mathcal{A}$, then
\begin{align*}
g^C_i(z)&=\sum_{a=0}^i\sum_{|\mathcal{A}|=a}(-1)^{i-a}{n-a\choose i-a}z^{\lambda(C(\mathcal{A}))}=\sum_{a=0}^i(-1)^{i-a}{n\choose a}{n-a\choose i-a}z^{\max\{a-d+1,0\}}\\
&=\sum_{a=0}^i (-1)^{i-a} {n\choose i}{i\choose a}z^{\max\{0,a-d+1\}\lambda(R)}.
\end{align*}
This completes the proof.
\end{proof}
\begin{remark}
Observe that $\sum_{a=0}^i (-1)^{i-a}{i\choose a}=0$. So, if $C$ is MDS, then
\begin{align*}
g^C_i(z)-0&={n\choose i}\left(\sum_{a=0}^i (-1)^{i-a}{i\choose a}\left(z^{\max\{0,a-d+1\}\lambda(R)}-1\right)\right)\\
&={n\choose i}\left(\sum_{a=d}^i (-1)^{i-a}{i\choose a}\left(z^{(a-d+1)\lambda(R)}-1\right)\right).
\end{align*}
Thus, when $R=\mathbb{F}_q$ and $z=q$, we recover the weight distribution of an MDS code.
\end{remark}

Since the dual of an MDS code is MDS, we can say that the weight enumerator polynomials of an MDS code and its dual are determined by each other. When $R$ is a field or a finite Frobenius ring, this is well-known and always true. In the general case of Artinian rings, the duality of enumerator polynomials is also tied to the property of being Frobenius.

\begin{lemma}\label{LemmaLenA}
If $R$ is a Frobenius ring, then
$$
\lambda(C^\perp(\cA))=\lambda(C^\perp)+\lambda(C(\cA^c))-\lambda(R)|\cA^c|.
$$
\end{lemma}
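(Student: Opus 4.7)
The plan is to identify $C^{\perp}(\cA)$ with the $R$-orthogonal of a projected code inside a smaller ambient space, and then apply the length identity $\lambda(M)+\lambda(M^{\perp})=m\lambda(R)$ of Lemma~\ref{Lemma_Dimension_analogous} twice: once inside $R^{|\cA|}$, and once inside $R^n$.

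To make this precise, I would introduce the coordinate projection $\pi_{\cA}\colon R^n\to R^{|\cA|}$ that keeps exactly the entries indexed by $\cA$. Since every element of $H_\cA$ has support contained in $\cA$, the restriction of $\pi_\cA$ to $H_\cA$ is an $R$-module isomorphism onto $R^{|\cA|}$, and for $v\in H_\cA$ and any $w\in R^n$ one has $v\cdot w=\pi_\cA(v)\cdot \pi_\cA(w)$, because the coordinates of $v$ outside $\cA$ vanish. Consequently, $v\in C^{\perp}\cap H_\cA$ if and only if $\pi_\cA(v)$ pairs trivially with every $\pi_\cA(w)$ for $w\in C$, which yields the $R$-module isomorphism $C^{\perp}(\cA)\cong(\pi_\cA(C))^{\perp}$ computed inside $R^{|\cA|}$.

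With that identification in hand, Lemma~\ref{Lemma_Dimension_analogous} applied in $R^{|\cA|}$ gives
$$\lambda(C^{\perp}(\cA))=|\cA|\lambda(R)-\lambda(\pi_\cA(C)).$$
The kernel of the restriction $\pi_\cA|_C$ is precisely $C\cap H_{\cA^c}=C(\cA^c)$, so the first isomorphism theorem combined with additivity of length (Lemma~\ref{aditividad}(3)) gives $\lambda(\pi_\cA(C))=\lambda(C)-\lambda(C(\cA^c))$. Substituting $\lambda(C)=n\lambda(R)-\lambda(C^{\perp})$ from Lemma~\ref{Lemma_Dimension_analogous} applied in $R^n$, and rewriting $n-|\cA|=|\cA^c|$, one arrives at the claimed identity.

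I expect the only delicate step to be the first one, namely the identification of $C^{\perp}(\cA)$ with the orthogonal of the punctured code in $R^{|\cA|}$, since this is where the explicit coordinatewise structure of the bilinear form is used; everything afterwards is pure length bookkeeping, and in particular the argument never needs to invoke double duals or compute $(C^\perp)^\perp$ explicitly.
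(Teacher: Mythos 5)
Your proof is correct, and it takes a genuinely different route from the paper's. The paper works entirely inside $R^n$ with duals: it computes $(C^\perp(\cA))^\perp=(C^\perp\cap H_\cA)^\perp=(C^\perp)^\perp+H_\cA^\perp=C+H_{\cA^c}$, applies the modular length formula to $C+H_{\cA^c}$, and then converts back using $\lambda((C^\perp(\cA))^\perp)=n\lambda(R)-\lambda(C^\perp(\cA))$; this argument leans on the identities $(C^\perp)^\perp=C$ and $(A\cap B)^\perp=A^\perp+B^\perp$, which hold over Frobenius rings but are used there without separate justification. You instead identify $C^\perp(\cA)=C^\perp\cap H_\cA$, via the isomorphism $\pi_\cA|_{H_\cA}\colon H_\cA\to R^{|\cA|}$ and the compatibility $v\cdot w=\pi_\cA(v)\cdot\pi_\cA(w)$ for $v\in H_\cA$, with the orthogonal of the punctured code $\pi_\cA(C)$ inside $R^{|\cA|}$, and then apply Lemma~\ref{Lemma_Dimension_analogous} twice together with the exact sequence $0\to C(\cA^c)\to C\to \pi_\cA(C)\to 0$. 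This is the classical ``dual of the punctured code is the shortened code of the dual'' phenomenon, and it has the advantage that the Frobenius hypothesis enters only through the single length identity of Lemma~\ref{Lemma_Dimension_analogous}, with no appeal to double duals or to $(A\cap B)^\perp=A^\perp+B^\perp$; the paper's version is a bit shorter once those duality facts are granted, and stays in one ambient space. All your intermediate steps check out: the kernel of $\pi_\cA|_C$ is exactly $C(\cA^c)$, length is additive on the exact sequence, and substituting $\lambda(C)=n\lambda(R)-\lambda(C^\perp)$ gives precisely the stated formula.
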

\begin{proof}
Note
$(C^\perp (\cA))^\perp = (C^\perp\cap H_{\cA})^\perp
=(C^\perp)^\perp+(H_{\cA})^\perp=C+H_{\cA^c}$.
Thus,
\begin{align*}
\lambda((C^\perp (\cA))^\perp)& =\lambda(C+H_{\cA^c})\\
& =\lambda(C)+\lambda(H_{\cA^c})-\lambda(C\cap H_{\cA^c}) \nonumber\\
& =n\lambda(R)-\lambda(C^\perp)+\lambda(H_{\cA^c})-\lambda(C(\cA^c)).\label{EqCcA1}
\end{align*}
As $\lambda((C^\perp (\cA))^\perp) = n\lambda(R) - \lambda(C^\perp (\cA))$ by Lemma~\ref{Lemma_Dimension_analogous}, we obtain the result.
\end{proof}

\begin{proposition}\label{prop.mwi}
If $R$ is a Frobenius ring, then
$$L_{C^\perp}(x,y,z)=\frac{1}{z^{\lambda(C)}}L_C(z^{\lambda(R)}y,x,z).$$
\end{proposition}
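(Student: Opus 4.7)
The plan is to expand $L_{C^\perp}(x,y,z)$ directly from its definition and rewrite the exponent of $z$ using Lemma~\ref{LemmaLenA}. Writing
\[
L_{C^\perp}(x,y,z) = \sum_{\cA \subseteq [n]} z^{\lambda(C^\perp(\cA))}\, x^{n-|\cA|} y^{|\cA|},
\]
Lemma~\ref{LemmaLenA} combined with Lemma~\ref{Lemma_Dimension_analogous} (which gives $\lambda(C^\perp) = n\lambda(R) - \lambda(C)$) yields
\[
\lambda(C^\perp(\cA)) = n\lambda(R) - \lambda(C) + \lambda(C(\cA^c)) - \lambda(R)|\cA^c|.
\]
I would then use $|\cA^c| = n - |\cA|$ to simplify this to $-\lambda(C) + \lambda(C(\cA^c)) + \lambda(R)|\cA|$, so that $z^{-\lambda(C)}$ pulls out of the sum cleanly.

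The second step is a change of index: setting $\cB := \cA^c$ (so $|\cA| = n - |\cB|$ and the monomial $x^{n-|\cA|}y^{|\cA|}$ becomes $x^{|\cB|}y^{n-|\cB|}$), the remaining sum rearranges as
\[
L_{C^\perp}(x,y,z) = z^{-\lambda(C)} \sum_{\cB \subseteq [n]} z^{\lambda(C(\cB))}\, x^{|\cB|}\, \bigl(z^{\lambda(R)} y\bigr)^{n-|\cB|},
\]
which matches the definition of $L_C$ evaluated at first argument $z^{\lambda(R)}y$ and second argument $x$. This is precisely the identity we wanted.

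All of the content is packaged into Lemma~\ref{LemmaLenA}, so there is no real obstacle beyond careful bookkeeping of the exponents and the involution $\cA \leftrightarrow \cA^c$ on subsets of $[n]$. The Frobenius assumption enters only implicitly, through Lemma~\ref{LemmaLenA}, which uses both the length duality $\lambda(C) + \lambda(C^\perp) = n\lambda(R)$ from Lemma~\ref{Lemma_Dimension_analogous} and the biduality $(C^\perp)^\perp = C$; both of these are features of Frobenius rings.
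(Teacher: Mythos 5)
Your proposal is correct and follows essentially the same route as the paper: expand $L_{C^\perp}$ from the definition, rewrite the exponent via Lemma~\ref{LemmaLenA} together with $\lambda(C^\perp)=n\lambda(R)-\lambda(C)$ from Lemma~\ref{Lemma_Dimension_analogous}, re-index by complements, and absorb $z^{\lambda(R)(n-|\cB|)}$ into the first argument after factoring out $z^{-\lambda(C)}$. The exponent bookkeeping in your argument checks out, so nothing further is needed.
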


\begin{proof}
We have that
\begin{align*}
L_{C^\perp}(x,y,z)&=\sum_{\cA\subseteq [n]} z^{\lambda(C^\perp(\cA))}x^{n-|\cA|}y^{|\cA|}\\
&=\sum_{\cA\subseteq [n]}z^{\lambda(C^\perp)+\lambda(C(\cA^c))-\lambda(R)|\cA^c|}x^{n-|\cA|}y^{|\cA|}\text{ (by Lemma~\ref{LemmaLenA}) }\\
&=\sum_{\cA\subseteq [n]}z^{\lambda(C^\perp)+\lambda(C(\cA))-\lambda(R)|\cA|}x^{|\cA|}y^{n-|\cA|}\text{ (by taking complements) }\\
&=\sum_{\cA\subseteq [n]}z^{n\lambda(R)-\lambda(C)+\lambda(C(\cA))-\lambda(R)|\cA|}x^{|\cA|}y^{n-|\cA|}\\
&=\frac{1}{z^{\lambda(C)}}\sum_{\cA\subseteq [n]}z^{n\lambda(R)+\lambda(C(\cA))-\lambda(R)|\cA|}x^{|\cA|}y^{n-|\cA|}\\
&=\frac{1}{z^{\lambda(C)}}\sum_{\cA\subseteq [n]}z^{\lambda(C(\cA))}x^{|\cA|}(z^{\lambda(R)}y)^{n-|\cA|}\\
&=\frac{1}{z^{\lambda(C)}}L_C(z^{\lambda(R)}y,x,z).
\end{align*}
Thus, we obtain the result.
\end{proof}

\begin{theorem}[MacWilliams Identity]\label{Theo.MWid}
If $R$ is Frobenius, then
$$W_{C^\perp}(x,y,z)=\frac{1}{z^{\lambda(C)}}W_C(x+zy-y,x-y,z).$$
\end{theorem}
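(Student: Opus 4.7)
The plan is to derive this MacWilliams identity from the length-enumerator analogue already proved in Proposition~\ref{prop.mwi}, together with the change-of-variable translation between $L_C$ and $W_C$ supplied by Proposition~\ref{Prop L vs W infinite}. Recall that Proposition~\ref{Prop L vs W infinite} gives $L_C(x,y,z) = W_C(x+y,y,z)$, or equivalently, by the substitution $x \mapsto x-y$, the inverted form $W_C(u,v,z) = L_C(u-v, v, z)$. This inverted form lets me freely pass back and forth between the two enumerators.

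First, I would apply the inverted form to the dual code in order to rewrite the left-hand side as a length enumerator of $C^{\perp}$:
\[
W_{C^{\perp}}(x,y,z) \;=\; L_{C^{\perp}}(x-y,\, y,\, z).
\]
Second, I would invoke Proposition~\ref{prop.mwi}, applied with its ``$x$'' replaced by $x-y$, to trade the dual length enumerator for one involving $C$:
\[
L_{C^{\perp}}(x-y,\, y,\, z) \;=\; \frac{1}{z^{\lambda(C)}}\, L_C\bigl(z^{\lambda(R)} y,\; x-y,\; z\bigr).
\]
Third, I would translate this $L_C$ back into a weight enumerator by another application of Proposition~\ref{Prop L vs W infinite}, this time with $u = z^{\lambda(R)}y$ and $v = x-y$:
\[
L_C\bigl(z^{\lambda(R)} y,\; x-y,\; z\bigr) \;=\; W_C\bigl(z^{\lambda(R)} y + (x-y),\; x-y,\; z\bigr).
\]
Collecting terms in the first argument yields the stated identity, with $1/z^{\lambda(C)}$ inherited verbatim from Proposition~\ref{prop.mwi}.

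The substantive obstacle is not in this final computation but in Proposition~\ref{prop.mwi}, which in turn depends on the Frobenius-specific length-duality relation $\lambda(C^{\perp}(\cA)) = \lambda(C^{\perp}) + \lambda(C(\cA^{c})) - \lambda(R)|\cA^{c}|$ from Lemma~\ref{LemmaLenA}: this is where the Frobenius hypothesis is genuinely used, via Lemma~\ref{Lemma_Dimension_analogous}. Given these inputs, the present theorem is a formal substitution, and the only bookkeeping point to watch is correctly tracking which slot of $L_C$ and $W_C$ plays the role of ``$x$'' and which plays the role of ``$y$'' at each of the two conversions between the two enumerator conventions.
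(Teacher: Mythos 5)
Your proof is correct and follows exactly the same three-step route as the paper's own proof: convert $W_{C^\perp}(x,y,z)$ to $L_{C^\perp}(x-y,y,z)$ via Proposition~\ref{Prop L vs W infinite}, apply Proposition~\ref{prop.mwi}, and convert back to a weight enumerator of $C$. One small bookkeeping remark (which applies equally to the paper's printed statement): the computation in fact produces $W_C(x+z^{\lambda(R)}y-y,\,x-y,\,z)$, so the first argument $x+zy-y$ in the theorem as stated matches only up to the apparent discrepancy between $z$ and $z^{\lambda(R)}$.
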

\begin{proof}
We have that
\begin{align*}
W_{C^\perp}(x,y,z)&=L_{C^\perp}(x-y,y,z) \text{ (by Proposition~\ref{Prop L vs W infinite}) }\\
&=\frac{1}{z^{\lambda(C)}}L_{C}(z^{\lambda(R)}y,x-y,z)\text{ (by Proposition~\ref{prop.mwi}) }\\
&=\frac{1}{z^{\lambda(C)}}W_C(x-y+z^{\lambda(R)}y,x-y,z)\text{ (by Proposition~\ref{Prop L vs W infinite}) }.
\end{align*}
Thus, we obtain the result.
\end{proof}

If $R$ is a field of size $q$, the evaluation of the weight enumerator polynomial at $z=q$ recovers the classical MacWilliams identities. Furthermore, similar to the finite setting, the following result shows that if an $R$-code satisfies the MacWilliams identities, then the ring must be Frobenius.

\begin{corollary}\label{25.06.13}
A ring $R$ is Frobenius if and only if 
$$W_{C^\perp}(x,y,z)=\frac{1}{z^{\lambda(C)}}W_C(x+zy-y,x-y,z)$$
for any $n\in\mathbb{N}$ and any $R$-code $C\subseteq R^n$.
\end{corollary}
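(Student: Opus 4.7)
The forward direction is exactly the content of Theorem~\ref{Theo.MWid}. For the converse, the plan is to specialize the polynomial identity at a single well-chosen pair of values of $x$ and $y$ in order to extract the numerical identity $\lambda(C)+\lambda(C^{\perp})=n\lambda(R)$ for every code $C\subseteq R^n$, and then invoke Lemma~\ref{Lemma_Dimension_analogous} to conclude that $R$ is Frobenius.

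The key ingredient is Proposition~\ref{Prop L vs W infinite}, which converts $W_C$ into the length enumerator via $W_C(x,y,z)=L_C(x-y,y,z)$. The length enumerator collapses drastically at the extremes of its first two arguments: only the term $\cA=[n]$ contributes when the first variable is $0$, yielding $L_C(0,y,z)=y^{n}z^{\lambda(C)}$, while only the term $\cA=\emptyset$ contributes when the second variable is $0$, yielding $L_C(t,0,z)=t^{n}$.

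Setting $x=y=1$, the left-hand side becomes
\[
W_{C^{\perp}}(1,1,z)=L_{C^{\perp}}(0,1,z)=z^{\lambda(C^{\perp})}.
\]
On the right-hand side, the second argument $x-y$ of $W_C$ becomes $0$, so the right-hand side collapses (in exactly the same way) to $\frac{1}{z^{\lambda(C)}}$ times the $n$-th power of the first argument; the first argument specialized is the value $z^{\lambda(R)}$ that actually appears in the proof of Theorem~\ref{Theo.MWid} (via Proposition~\ref{prop.mwi}), so the right-hand side equals $\frac{1}{z^{\lambda(C)}}\,z^{n\lambda(R)}=z^{n\lambda(R)-\lambda(C)}$. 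Comparing exponents forces $\lambda(C)+\lambda(C^{\perp})=n\lambda(R)$ for every $n$ and every code $C\subseteq R^{n}$, and Lemma~\ref{Lemma_Dimension_analogous} then gives that $R$ is Frobenius.

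The only delicate point in the argument is the bookkeeping for the substitution on the right-hand side: one must track the substitution through Proposition~\ref{Prop L vs W infinite} and Proposition~\ref{prop.mwi} so as to confirm that the first slot really becomes $z^{\lambda(R)}$ (and not just $z$) when $x=y=1$, ensuring that the resulting exponent of $z$ is $n\lambda(R)-\lambda(C)$ rather than $n-\lambda(C)$. Once this identification is checked, the converse direction is a one-line specialization followed by a direct appeal to the length-based characterization of Frobenius rings.
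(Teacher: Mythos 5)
Your proposal is correct and follows essentially the same route as the paper: the paper restates the identity in terms of $L_C$ and evaluates at $(x,y)=(0,1)$, which is exactly your specialization $x=y=1$ of $W$ (since $W_C(x,y,z)=L_C(x-y,y,z)$), extracting $\lambda(C)+\lambda(C^\perp)=n\lambda(R)$ and then invoking Lemma~\ref{Lemma_Dimension_analogous}. Your ``delicate point'' is resolved the way the paper intends: the first slot is indeed $z^{\lambda(R)}y$ (as in Proposition~\ref{prop.mwi} and the proof of Theorem~\ref{Theo.MWid}), the $zy$ appearing in the displayed $W$-identity being an inconsistency of the paper's statement with its own proof.
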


\begin{proof}
For simplicity, we prove the equivalent statement: $R$ is Frobenius if and only if for any code $C\subseteq R^n$,
$$L_{C^\perp}(x,y,z)=\frac{1}{z^{\lambda(C)}}L_C(z^{\lambda(R)}y,x,z).$$

The necessity is given by Proposition \ref{prop.mwi}. Sufficiency is given by the fact that
$$z^{\lambda(C^\perp)}=L_{C^\perp}(0,1,z)=\frac{1}{z^{\lambda(C)}}L_C(z^{\lambda(R)},0,z)=\frac{z^{n\lambda(R)}}{z^{\lambda(C)}}$$
and by Lemma~\ref{Lemma_Dimension_analogous}.
\end{proof}

\bibliographystyle{alpha}
\bibliography{References}

\end{document}